\theoremstyle{plain}
\newtheorem{thm}{Theorem}[section]
\newtheorem*{starthm}{Theorem}
\newtheorem{cor}[thm]{Corollary}
\newtheorem{prop}[thm]{Proposition}
\newtheorem{lemma}[thm]{Lemma}
\newtheorem*{starlemma}{Lemma}
\newtheorem{defn}{Definition}
\newtheorem{remark}{Remark}[section]
\newcommand{\calb}{{\mathcal B}}
\newcommand{\calf}{{\mathcal F}}
\newcommand{\calfp}{{\mathcal {FP}}}
\newcommand{\calh}{{\mathcal H}}
\newcommand{\call}{{\mathcal L}}
\newcommand{\calm}{{\mathcal M}}
\newcommand{\calr}{{\mathcal R}}
\newcommand{\CC}{{\mathbb C}}
\newcommand{\DD}{{\mathbb D}}
\newcommand{\NN}{{\mathbb N}}
\newcommand{\RR}{{\mathbb R}}
\newcommand{\ZZ}{{\mathbb Z}}
\renewcommand{\hat}{\widehat}
\newcommand{\la}{\lambda}
\begin{document}
\title{Meromorphic functions with a polar asymptotic value}

\author{Tao Chen and Linda Keen}

\address{Tao Chen, Department of Mathematics, Engineering and Computer Science,
Laguardia Community College, CUNY,
31-10 Thomson Ave. Long Island City, NY 11101 and
CUNY Graduate Center, New York, NY 10016
}
\email{tchen@lagcc.cuny.edu}

\address{Linda Keen, Department of Mathematics, the CUNY Graduate
School, New York, NY 10016}
\email{LKeen@gc.cuny.edu; linda.keenbrezin@gmail.com}

\thanks{This material is based upon work supported by the National Science Foundation under Grant No. 1440140, while the second author was in residence at the Mathematical Sciences Research Institute in Berkeley, California, during the spring semester 2022}

\subjclass[2020]{Primary: 37F10, 37F31, 37F20; Secondary: 32G05, 30D30, 30F60}

\begin{abstract}
This paper is part of a general program in complex dynamics to understand parameter spaces of transcendental maps with finitely many singular values.

 The simplest families of such functions have two asymptotic values and no critical values.  These families, up to affine conjugation, depend on two complex parameters. Understanding their parameter spaces is key to understanding families with more asymptotic values, just as understanding quadratic polynomials was for rational maps more generally.

The first such families studied were the one-dimensional slices of the exponential family, $\exp(z) + a$, and the tangent family $\lambda \tan z$.  The exponential case exhibited phenomena not seen for rational maps: Cantor bouquets in both the dynamic and parameter spaces, and no bounded hyperbolic components.  The tangent case, with its two finite asymptotic values $\pm \lambda i$, is closer to the rational case, a kind of infinite degree version of the latter.

In this paper, we consider a general family that interpolates between $\exp(z) + a$ and $\lambda \tan z$.  Our new family has two asymptotic values and a one-dimensional slice for which one of the asymptotic values is constrained to be a pole, the ``polar asymptotic value'' of the title.   We show how the dynamic and parameter planes for this slice exhibit behavior that is a surprisingly delicate interplay between that of the $\exp(z) + a$ and $\lambda \tan z$ families.

\end{abstract}

\maketitle
\section{Introduction}

A general principle in complex dynamics is that a function's singular values,  the points over which it is not a local homeomorphism,``control'' the eventually periodic stable dynamics.   Since rational maps have only finitely many singular values, and all stable behavior is eventually periodic,  (see \cite{Sul}), a given family exhibits only finitely many different stable dynamical phenomena.  The same is true for families of transcendental functions with finitely many singular values, (see \cite{BKL4}), but in this case some of these values must be asymptotic, in the sense that, if  $f$ is the map, there is a path $\gamma(t)$ such that $\lim_{t \to 1} \gamma(t)=\infty$, then  $\lim_{t \to 1} f(\gamma(t))=v$.  Where transcendental functions and rational functions part company more seriously is due to transcendentals having infinite degree, and having a point, infinity, that is an essential singularity and not in the domain.  This paper is part of a program to understand both the dynamical properties of functions in such families and the structure of their parameter spaces.

The artistry here is in describing families that are natural and have something interesting to say.   The obvious starting point  (see for example,  \cite{D1, DG,DT, S} was the exponential family $E_a(z)=\exp(z) +a$. Here infinity not only has no forward image, it also has no preimages.  It is, however, an asymptotic value of $E_a$.  The only other singular value is the asymptotic value $a$ and as $a$ varies, so do the dynamics.  For example, the unstable of $E_a$ set contains ``Cantor bouquets'' of curves, which are families of  curves invariant under $E_a$ whose cross-section is a Cantor set.  In addition, all components of the parameter plane for the family $E_a$ in which the functions have topologically conjugate dynamics are unbounded.

One natural family, then, generalizes $E_a(z)$ to transcendental meromorphic functions with no critical values and exactly two asymptotic or omitted values. In addition to the exponential, this family  contains the tangent family, $\la \tan z$;  its asymptotic values are $\pm \la i$ and their orbits are symmetric.  Thus, the dynamics are ``controlled'' by the behavior of either of the asymptotic values.   There are parameter values for which the asymptotic values of the functions are prepoles, and these play a special role in the structure of the parameter plane.  See \cite{ ABF,CJK1,FK, KK}.

 Another variation on this theme is to consider the  family of functions with two finite asymptotic values and one attracting fixed point.  Fixing the derivative at the attracting fixed point determines a one dimensional parameter space.  There are tangent maps  contained in this family and the local dynamical structure is similar to what we see for $\la \tan z$.  Because the asymptotic values are not symmetric, however, the parameter plane has properties similar to both the tangent family and rational maps of degree $2$ with an attracting fixed point.    See \cite{CJK2,CJK3}.

In this paper, we are going to look at a subfamily $\calfp_2$ of the family with two asymptotic values and no other singular values.  The functions in $\calfp_2$,  have the property that one asymptotic value is fixed and is a pole --- the ``polar asymptotic value''; the other asymptotic value is ``free'' and determines the dynamics.    The family $\calfp_2$ can be thought of as a hybrid of the tangent and exponential families: in both the dynamical and parameter planes, we have behavior that reflects properties from each of these two families.  The significance of having a polar asymptotic value stems from the general fact that  an attracting periodic cycle must attract an asymptotic value, but it cannot attract the pole, so the only other asymptotic value must be attracted to it.   This implies there can be only one attractive cycle, and its basin of attraction is the full stable set.

On the one hand, this behavior is similar to the exponential family, where one asymptotic value is infinity (a pole of order 0). On the other hand, because here there are prepoles, the non-zero asymptotic value can be a prepole, so we also see similarities with the tangent family.   It is this interplay that raises new problems and makes this family of particular interest.

Our analysis of $\calfp_2$ gives an explicit description of ideas introduced in a rather different context, \cite{FK} and extended in \cite{ABF}, in which the families studied have a finite number of singular values and are parameterized by these values.   
In these families, ``slices'' are defined by fixing  the dynamic behavior of the  orbits of some of  singular values and allowing the dynamic behavior of the others to vary freely.  There are components of this slice in which the functions have topologically conjugate dynamics.  In these components, the functions are hyperbolic 
 meaning that the orbits of the free singular values  are disjoint from the unstable set,   the Julia set.     This property allows these components to be fully characterized, and in particular, it was shown that they have a unique special boundary point that can be characterized in two ways:   first, the free asymptotic value is a prepole and second, as this boundary point is approached from within the component, the derivative along the attracting cycle tends to zero.   We called points with these properties {\em virtual cycle parameters} and {\em virtual centers}, respectively.

To return to our family $\calfp_2$:  The polar asymptotic value always belongs to the unstable set, so the functions in $\calfp_2$ can not be hyperbolic.  Nevertheless, the parameter space contains components in which the dynamics are topologically conjugate.  As shown in \cite{ABF}, these are ``stability components'' in the sense of Ma\~n\'e-Sad-Sullivan, \cite{MSS}.   We will describe how these ``pseudo-hyperbolic components'' are situated as a function of the free asymptotic value.  There are values of the parameter for which the free asymptotic value is a prepole, and this is where  ideas from  \cite{FK}  come in. These values are virtual cycle parameters, and they again play a role  as virtual centers, in this case, for the pseudo-hyperbolic components.

Before getting to the general theorems, let us look at the somewhat special case where the attractive cycle is a fixed point, whose dynamical picture is very much like that of $E_a(z)$, where $a$ is attracted to a fixed point.  By analogy to that case, using the definition of Cantor bouquets in \cite{DK, DT} and the techniques of \cite{DK},  we prove:

\begin{starthm}[Theorem A]
Let $f \in \calfp_2$ and suppose the free asymptotic value is also a prepole.  Then 
the Julia set of $f$ contains  Cantor bouquets at infinity as well as at each of the poles and prepoles;   if, in addition,  $f$ has an attracting fixed point,  the stable set is a completely invariant connected set and the Cantor bouquets comprise its full complement.  \end{starthm}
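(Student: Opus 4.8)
The plan is to carry out, in the present meromorphic setting, the itinerary construction of Cantor bouquets developed for the exponential family by Devaney and Krych \cite{DK} (see also Devaney--Tangerman \cite{DT}), exploiting the fact that the polar asymptotic value forces a second iterate of $f$ to be of exponential type near $\infty$. Write $v$ for the polar asymptotic value and $a$ for the free one. Since $f\in\calfp_2$ has exactly the two asymptotic values $v,a$ and no critical values, a standard argument produces two logarithmic tracts near $\infty$: an unbounded region $T_v$ on which $f$ is a universal covering onto a punctured neighbourhood of $v$, and a similar $T_a$ over $a$, with $f$ staying away from $v$ and $a$ off $T_v\cup T_a$. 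Because $v$ is a pole, $f$ maps a punctured neighbourhood of $v$ onto a neighbourhood of $\infty$, and composing with $f|_{T_v}$ shows that $f^{2}$, restricted to the far part of $T_v$ with the poles it contains removed, is asymptotic to $z\mapsto Ce^{z}$ on a right half-plane --- exactly the local picture underlying the bouquet analysis of \cite{DK,DT}. I would begin by making this precise, locating the poles and prepoles of $f^{2}$ that intrude into $T_v$, and cutting $T_v$ into \emph{fundamental domains} $\{R_{j}\}_{j\in\ZZ}$ permuted by the deck group $2\pi i\ZZ$, so that the relevant branch of $f^{2}$ maps each $R_{j}$ univalently over the whole far region.

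The second step is the hair construction. A point whose $f^{2}$-orbit stays in $\bigcup_{j}R_{j}$ is assigned an itinerary $(s_{0},s_{1},\dots)\in\ZZ^{\NN}$, and, following \cite{DK}, such an itinerary is \emph{admissible} precisely when $|s_{n}|$ grows no faster than an iterated exponential. For admissible $s$ the nested sets $\bigcap_{n\le N}(f^{2})^{-n}(\overline{R_{s_{n}}})$ shrink in the relevant hyperbolic metric, because each inverse branch of the return map is a uniform contraction (its image is relatively compact in its domain); the telescoping lemma of \cite{DK} then identifies the intersection as a single arc $h_{s}$ tending to $\infty$ inside $R_{s_{0}}$, with every point of $h_{s}$ except possibly its finite endpoint escaping to $\infty$ under $f^{2}$ (so in particular having an unbounded $f$-orbit). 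When the free asymptotic value $a$ is a prepole there is an additional distinguished family of itineraries whose orbits are forced onto the prepole, hence onto $\infty$, after finitely many steps; these contribute arcs that land on the prepole. The endpoints together with the open hairs form a Cantor bouquet (a straight brush) in the sense of \cite{DK,DT}; pushing it across the covering $f|_{T_v}$ gives a Cantor bouquet accumulating at $\infty$.

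Third, I would propagate this bouquet to the poles and prepoles. Near any pole $p$ one has $f(z)=c_{p}/(z-p)+O(1)$, so the branch of $f^{-1}$ with $f^{-1}(\infty)=p$ is conformal at $\infty$ and carries the Cantor bouquet at $\infty$ to a Cantor bouquet whose hairs converge to the finite point $p$; iterating along the (infinitely many) backward orbits of $\infty$, and using that each inverse branch has bounded distortion on the neighbourhood in play, yields a Cantor bouquet at every prepole. Since $f$ has infinitely many poles, $J(f)$ is the closure of the set of prepoles of $\infty$, and it therefore contains a Cantor bouquet at $\infty$ and at every pole and prepole; this is the first assertion. Now suppose $f$ also has an attracting fixed point, with basin $\calb$. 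By the principle recalled in the Introduction --- an attracting cycle must attract an asymptotic value and cannot attract the polar one --- the value $a$ lies in $\calb$; and since $f$ is of finite type there are no wandering domains and, there being no further singular value left to support one, no other periodic Fatou cycle (\cite{BKL4,ABF}), so the stable set is exactly $\calb$. Complete invariance of $\calb$, hence of $J(f)=\chat\setminus\calb$, is immediate. To finish one argues, as in \cite{DK}, that the symbolic dynamics above in fact accounts for all of $J(f)$, so that $J(f)$ is precisely the union of the bouquet at $\infty$ and its inverse images --- a Cantor bouquet, now with hairs landing at $\infty$ and at the prepoles --- and $\chat$ minus a Cantor bouquet is connected; thus $\calb$ is connected and the Cantor bouquets comprise exactly its complement.

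The crux is the pullback geometry in the presence of poles, which has no analogue for $E_{a}$: the fundamental domains near $\infty$ are separated by, and punctured at, poles and prepoles of $f^{2}$, and one must check that the expansion estimate and the ``image relatively compact in its domain'' property driving the telescoping lemma survive the composition of the exponential behaviour at $\infty$ with the $1/(z-p)$ behaviour at the pole, and that a hair is not broken when an orbit passes near a pole. The companion difficulty is promoting the local bouquets at the prepoles to a single global Cantor bouquet, which requires the admissibility bookkeeping and the hyperbolic contraction to be made uniform along backward orbits; this uniformity is also what yields the connectivity of the stable set in the final step.
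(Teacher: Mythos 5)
Your first half takes essentially the same route as the paper, just written out in detail: the paper establishes that the prepoles are accessible boundary points by pulling back a path joining the attracting cycle to $+\infty$, and then states that ``the arguments in \cite{DK} apply directly'' to turn these pullback paths into Cantor $N$-bouquets and hence, via the inclusions $C_N\subset C_{N+1}$, into Cantor bouquets at $\infty$ and at every pole and prepole. Your fundamental-domain/itinerary/telescoping sketch is exactly the content of that citation, together with the observation (the paper's basic estimate $f^2(z)\sim -e^{-2z}/2$ for $\Re z\ll 0$) that $f^2$ is of exponential type on the tract of the polar asymptotic value. One small correction: for $M$ large the tract $L_M$ over the polar value contains no poles or prepoles of $f^2$, since $f(L_M)$ is a small punctured neighbourhood of $0$ and the nearest nonzero pole is at distance $\pi$; so the ``with the poles it contains removed'' caveat is vacuous there, and the poles only enter when you pull the bouquet back, as in your third paragraph.

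Where you genuinely diverge is the second half, and there your logical order is the weaker one. The paper proves complete invariance (hence connectivity) of the stable set directly: take a simply connected $U\subset A_0$ containing both $\la$ and the fixed point $z_0$; because $f$ is a universal cover of $\hat\CC\setminus\{0,\la\}$ and $U$ contains the asymptotic value $\la$, the full preimage $f^{-1}(U)$ is a single connected unbounded set containing every preimage of $z_0$, hence lies in $A_0$, giving $f^{-1}(A_0)=A_0$. Only then does ``the bouquets comprise the full complement'' follow, since $J_f=\CC\setminus A_0$ is the closure of the prepoles and each prepole is an endpoint of hairs. You instead first assert that the symbolic dynamics accounts for all of $J(f)$ and then deduce connectivity of the Fatou set from ``$\hat\CC$ minus a Cantor bouquet is connected.'' This rests on the two hardest, unargued claims: that every non-escaping Julia point (for instance every repelling periodic point) lies on a hair, and that the complement of the union of bouquets attached densely at all prepoles is connected --- the latter being essentially the statement you are trying to prove. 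Replace that step with the covering-space argument and the rest of your outline goes through; your elimination of other Fatou cycles (only two singular values, one of which is the pole $0\in J_f$, plus no wandering or Baker domains) matches the paper.
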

\medskip

 Going back to the early work on quadratic maps (see  e.g. \cite{DH}), a standard approach to studying both the dynamic and parameter spaces is to find a   scheme to encode the dynamics.  Here we define a combinatorial tree-like structure whose nodes are prepoles and whose branches are preimages of segments of the imaginary axis.  The labels, or addresses, of the nodes are determined by the order of the poles on the imaginary axis.   In addition, we study the dynamics by defining a structure on the set of  preimages of the asymptotic tracts, $L$ and $R$, which we call the L-R structure.
   
 Kneading sequences for attractive cycles of functions in various families of holomorphic functions have been defined in the literature  by going backwards around the cycle and reading from right to left, (see e.g. \cite{CJK2,DFJ,KK}).  Here,  we also define a kneading sequence for each attractive cycle using the addresses of the prepoles and  the L-R structure.   We write the kneading sequence as $*k_1\cdots k_{n-1}$ where the $*$ represents an ambiguity we discuss later.

 The polar asymptotic value makes  the ``L-R structure'' particularly interesting when the function has an attractive cycle.  
  A rough description is that points in the attractive cycle can bounce back and forth from a neighborhood of infinity to a neighborhood of the polar asymptotic value.   If that doesn't happen, we call the cycle {\em regular}.  If part of the cycle bounces, we call the cycle {\em hybrid}, and if the whole cycle bounces, we call it {\em unipolar}.   See Definitions~\ref{knseqnew} and \ref{poleseq} for the formal definitions.

 We can completely characterize the types of kneading sequences that can occur:
\begin{starthm}[Theorem B]
 Let $f$ be a pseudo-hyperbolic map whose attracting cycle $\{z_0, \ldots, z_{n-1}\}$ has period $n$ and suppose the cycle has kneading sequence $*k_1\cdots k_{n-1}$.

Then the  kneading sequence for $f$ is one of:
\begin{enumerate}
\item  Regular sequence: $*k_1\cdots k_{n-1}$ where $k_{n-1}\neq 0$ or
 \item Unipolar sequence:  $*k_10 \cdots k_{n-2}0$,  the period is odd so the sequence contains an even number of digits after the $*$;    all the even ones  but not all the odd ones are zero,
\item Hybrid sequence:  $*k_1\cdots k_{l}k_{l+1}0\cdots k_{n-2}0$, $ l >1$, $k_l, k_{l+1} \neq 0$.
\end{enumerate}
\end{starthm}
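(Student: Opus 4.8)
The plan is to read the kneading sequence $*k_1\cdots k_{n-1}$ as the itinerary of the attracting cycle $z_0\to z_1\to\cdots\to z_{n-1}\to z_0$, where $z_0$ is the cycle point whose Fatou component carries the free asymptotic value, and then to decide which itineraries are dynamically admissible. By Definitions~\ref{knseqnew} and~\ref{poleseq}, the symbol $k_j$ records which node of the prepole tree, and which sheet of the L-R structure, contains $z_j$; the symbol $0$ is attached exactly to those cycle points lying in the ``polar block'', the part of the L-R structure governed by the pole at the polar asymptotic value together with $\infty$; the symbol $*$ is attached to $z_0$, whose address is only determined up to the ambiguity recorded there; and every other symbol is a genuine nonzero prepole address. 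In these terms Theorem~B is the statement that only three shapes of itinerary are possible.

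The engine of the argument is a pair of local transition rules. First, since the polar asymptotic value is a pole, $f$ carries a punctured neighborhood of it biholomorphically onto a neighborhood of $\infty$. Second, near $\infty$ each of the two asymptotic tracts is a universal cover of a punctured neighborhood of its asymptotic value --- one value being the polar one, the other the free one --- while off the pole and $\infty$, $f$ is a local homeomorphism sending a prepole-tree node to its tree-prescribed neighbor. From these one extracts the \emph{bounce lemma}: if a cycle point carries the symbol $0$, then its successor around the cycle again lies in the polar block, unless that successor is $z_0$; hence a maximal run of cycle points meeting the polar block is a rigid alternation between ``near $\infty$ in the polar tract'' and ``near the pole'', it can be left only at the point $z_{n-1}$ whose image is $z_0$ (the only place the orbit re-enters through the non-polar tract), and it can be entered only by first passing through a regular preimage of the pole, whose symbol is nonzero. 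Therefore the $0$'s occupy a single block that, if nonempty, terminates at position $n-1$; the two positions at which such a block is entered carry nonzero symbols, which accounts for $k_l, k_{l+1}\neq 0$ in the hybrid case; and because each turn of the alternation consumes two cycle steps, a parity count pins down which positions carry $0$ and, when the block is all of $k_1\cdots k_{n-1}$, forces the period $n$ to be odd with every even-indexed symbol equal to $0$. Splitting into the cases ``the block is empty'', ``the block is everything after $*$'', and ``the block is proper and nonempty'' yields exactly the regular, unipolar, and hybrid forms; the clause ``not all odd-indexed symbols vanish'' is simply the remark that a genuine period-$n$ cycle cannot be constant.

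The main obstacle is the bounce lemma, and inside it the claim that the polar alternation can terminate only at the asymptotic-value component. This needs a careful description of the L-R structure near $\infty$: that a cycle point near $\infty$ in the polar tract has its image near the pole, that a cycle point near the pole has its image near $\infty$, and that such an image can return to the $z_0$-component only through the non-polar tract and never through the polar one. A second, more delicate point is to reconcile the combinatorial bookkeeping --- which positions must carry $0$, the parity assertions, and the exclusion $l>1$ separating the hybrid shape from the unipolar one --- with the precise indexing of Definition~\ref{knseqnew} and the resolution of the $*$-ambiguity, since the kneading sequence is read backwards around the cycle.
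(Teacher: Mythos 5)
Your overall strategy --- reading the kneading sequence as an itinerary and showing that once the orbit meets the ``polar block'' it is locked into a pole/$-\infty$ alternation that can only be exited at the component containing the right half-plane --- is the paper's strategy: your bounce lemma is the forward-path argument in the first half of Lemma~\ref{separating} together with the backward pullback of $+\infty$ that produces the sequence of boundary points $q_i$ in Definition~\ref{poleseq}, and your case split on the maximal terminal block reproduces Lemmas~\ref{regseq} and~\ref{hybseq}. That part of the plan is sound. However, two of the constraints in the statement are \emph{not} consequences of the bounce lemma plus parity, and your proposal supplies neither.

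First, ``not all the odd ones are zero'' in the unipolar case is not ``the remark that a genuine period-$n$ cycle cannot be constant.'' An all-zero itinerary does not make the cycle constant: the points $z_j$ lie in distinct components $A_j$, all of which merely have $0$ or $-\infty$ on their boundary and meet only the sets $L_{0\cdots 0}$, $R_{0\cdots 0}$. Excluding this case requires a genuine planar argument: in the second half of Lemma~\ref{separating} the paper uses the relative vertical ordering of the nested regions $L_{0\cdots0}$ and $R_{0\cdots0}$ (Remark~\ref{RLorder}) to show that if every entry vanished, the curve $\gamma_0\cup\beta_{n-1}$ would separate two regions that must both lie in $A_1$, forcing $A_1=A_{n-1}$ and hence $n=2$, contradicting $n$ odd. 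Second, your parity count does not ``force the period to be odd'' when the block is everything. The alternation is anchored at position $n-1$, which always carries the pole, so both phases --- zeros at the even positions with $n$ odd (unipolar), and zeros at the odd positions with $n$ even, i.e.\ $*0k_20\cdots k_{n-2}0$ --- are equally consistent with the bounce lemma, and the second is none of the three admissible shapes, so it must be ruled out. The paper does this by an entirely different mechanism (Lemma~\ref{evenper}, proved in Section~\ref{zeronotvc}): an explicit computation showing that $\lim_{\lambda\to 0}f^j_\lambda(\lambda)$ alternates between $0$ and finite nonzero limits $a_i$ and never tends to $\infty$, so $0$ cannot be the virtual cycle parameter/virtual center that a component with such a sequence would require by Theorem C and Proposition~\ref{cycbehav}. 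Relatedly, your assertion that the block is ``entered'' through \emph{two} consecutive nonzero symbols (giving $k_l,k_{l+1}\neq 0$) is stated but not derived from the bounce lemma. Until you supply substitutes for these arguments, the proof is incomplete.
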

If $n>1$, not all entries in the kneading sequence can be zero and  if $n$ is even, the sequence cannot have the form $*0k_20 \ldots 0k_{n-1}0$.

Functions in the same pseudo-hyperbolic component have the same kneading sequences, so the kneading sequences can be used to label these components.
Because these functions have no critical values, the derivative at any point in the cycle can never be zero.   A boundary point at which this derivative does have limit $0$ is again called a {\em  virtual center.}   Using techniques similar to those in \cite{FK} we prove

\begin{starthm}[Theorem C]   Every pseudo-hyperbolic  component has a unique virtual center and this boundary point is a virtual cycle parameter.
\end{starthm}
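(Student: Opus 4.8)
\medskip
\noindent\textbf{Proof strategy.}
The plan is to analyze each pseudo-hyperbolic component $\Omega$ through its multiplier map, following the template of \cite{FK,ABF}. Over $\Omega$ the attracting cycle $\{z_0(\la),\dots,z_{n-1}(\la)\}$ of period $n$ moves holomorphically and avoids the poles and $\infty$ (the functions are quasiconformally conjugate throughout $\Omega$ in the sense of \cite{MSS}), so
\[
\rho(\la)=(f_\la^n)'\big(z_0(\la)\big)=\prod_{j=0}^{n-1} f_\la'\big(z_j(\la)\big)
\]
defines a holomorphic map $\rho\colon\Omega\to\DD$; since the functions in $\calfp_2$ have no critical values, every factor is nonzero and $\rho$ takes values in $\DD\setminus\{0\}$. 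The easy half of the statement is the implication ``virtual cycle parameter $\Rightarrow$ virtual center.'' If $\lo\in\partial\Omega$ is a virtual cycle parameter, the free asymptotic value $v_{\lo}$ is a prepole whose orbit reaches a pole after exactly the number of steps for which the virtual cycle $\infty\to v\to f(v)\to\cdots\to(\text{pole})\to\infty$ has length $n$. Tracking $\{z_j(\la)\}$ as $\la\to\lo$ and using that $f_\la'$ is uniformly small deep inside an asymptotic tract, one shows a cycle point escapes into the tract over the free value while the others converge to $v_{\lo},f(v_{\lo}),\dots$; hence a factor of $\rho$ tends to $0$ and the rest remain bounded, so $\rho(\la)\to0$. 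This is the local computation of \cite{FK} specialized to $\calfp_2$.

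For the converse, ``virtual center $\Rightarrow$ virtual cycle parameter,'' let $\la_j\to\las\in\partial\Omega$ with $\rho(\la_j)\to0$. After passing to a subsequence some factor $f_{\la_j}'\big(z_k(\la_j)\big)\to0$; for functions in $\calfp_2$ this forces $z_k(\la_j)\to\infty$ along an asymptotic tract, necessarily the tract over the free value, since the polar asymptotic value lies in the Julia set and cannot be a limit of points of an attracting cycle. Then $z_{k+1}(\la_j)=f_{\la_j}\big(z_k(\la_j)\big)\to v_{\las}$, and — using the dynamical conjugacy to keep the non-escaping portion of the cycle inside a fixed compact subset of $\chat$ — taking the limit around the finite cycle shows the orbit of $v_{\las}$ runs through $f_{\las}(v_{\las}),\dots$ and hits a pole at the step that closes a virtual cycle of length $n$; thus $\las$ is a virtual cycle parameter.

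The core of the theorem is existence and uniqueness of the virtual center, and here I would prove that $\rho\colon\Omega\to\DD\setminus\{0\}$ is a covering map. Properness is the crucial input: as $\la\to\partial\Omega$ either $|\rho(\la)|\to1$ (loss of attractivity) or $\rho(\la)\to0$ (the analysis above), so $\rho(\la)$ leaves every compact subset of $\DD\setminus\{0\}$; a proper holomorphic map onto $\DD\setminus\{0\}$ is a branched covering, and the chain-rule form of $\rho$ together with the absence of critical values of $f_\la$ and the non-collision of the cycle rules out ramification, so $\rho$ is an unbranched covering. Every covering of $\DD\setminus\{0\}$ has a \emph{unique} end over the puncture $0$, which yields uniqueness of the virtual center once that end is known to be a single boundary point. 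To obtain this — and the existence of the virtual center — I would combine the local analysis near a virtual cycle parameter $\lo$ (showing $\rho$ is finite and proper from a one-sided neighborhood of $\lo$ in $\ov\Omega$ onto a punctured neighborhood of $0$, so that $\rho$ extends continuously with value $0$ there) with the combinatorial rigidity of Theorem~B: all functions in $\Omega$ share one kneading sequence and one L--R structure, and the prepole addresses recorded by that structure obstruct additional sheets of $\rho$ and additional virtual cycle parameters on $\partial\Omega$. The conclusion is that $\rho$ extends to a homeomorphism $\ov\Omega\to\ov\DD$ carrying a single boundary point to $0$; that point is the unique virtual center, and by the argument of the preceding paragraph it is a virtual cycle parameter.

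The step I expect to be the main obstacle is the properness of $\rho$ — equivalently, showing that a degenerating attracting cycle whose multiplier stays away from the unit circle is always detected in the parameter plane as a virtual cycle parameter, with the limiting parameter $\las$ genuinely existing (not escaping in parameter space) and the non-escaping cycle points staying finite. This needs the explicit normal form for $\calfp_2$ and quantitative estimates on $f_\la$ and $f_\la'$ in the two asymptotic tracts, and it is where the hybrid exponential/tangent nature of the family makes the bookkeeping delicate; the covering and degree arguments are then comparatively formal.
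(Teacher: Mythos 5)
Your overall architecture---the multiplier map $\calm\colon\Omega\to\DD^*$ as an unbranched covering, properness, the unique end over the puncture, and a degenerating-cycle analysis on the boundary---is the same as the paper's, which in fact imports exactly these statements from \cite{FK,ABF} (Theorem~\ref{omeganprops}, where $\calm$ is asserted to be a \emph{universal} cover of $\DD^*$) and then identifies the virtual center with a virtual cycle parameter via Proposition~\ref{cycbehav}. The gap is in your converse step, and it sits precisely where the novelty of this family lies. You claim that if $\calm(\la_j)\to 0$ then the escaping cycle point $z_k(\la_j)$ must go to $\infty$ through the tract over the \emph{free} value, ``since the polar asymptotic value lies in the Julia set and cannot be a limit of points of an attracting cycle.'' Both halves of this are wrong for $\calfp_2$: first, $f_\la'(z)\sim -2\la e^{\mp 2z}\to 0$ in \emph{both} asymptotic tracts, so the escaping cycle point may go to $-\infty$, into the tract of the polar value $0$; second, nothing prevents the attracting cycle of $f_{\la_j}$ from accumulating, as $j\to\infty$, on a point of the Julia set of the limit map. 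Indeed, in the hybrid and unipolar cases the basin components are unbounded to the left and have $0$ on their boundary (Lemma~\ref{separating}), and the limiting ``cycle'' bounces $-\infty\to 0\to-\infty\to\cdots$; this is exactly why the paper introduces hybrid virtual cycles (Definition~\ref{lavcs}) and needs the three cases of Proposition~\ref{cycbehav}. As written, your argument establishes only the regular case (3a).

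A second, related problem: the finiteness of $\las$ that you flag as ``the main obstacle'' genuinely \emph{fails} for unipolar components. There the virtual center is the point at infinity of the parameter plane ($\Re\la\to-\infty$, Proposition~\ref{cycbehav}(3c) and Theorem~\ref{Main Theorem A}(1)), and the paper reads the conclusion of Theorem C with $\infty$ acting as an ideal virtual cycle parameter (the limit cycle being $\{+\infty,-\infty,0,\dots\}$). Any proof must either build in this convention or treat the unipolar components separately; your outline assumes the limiting parameter exists in $\CC^*$ and would therefore break down on every unipolar component. The existence/uniqueness half of your plan (covering of $\DD^*$, single end over the puncture, landing of that end) is sound and matches the route of \cite{FK}, provided these two points are repaired.
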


The main result is essentially a converse to  Theorems B and C.

\begin{starthm} [Theorem D]
\begin{enumerate}
\item Given any unipolar kneading sequence, there is a  pseudo-hyperbolic component with this kneading sequence.  It is unbounded in the  half-plane, and as the parameter tends to $\infty$ in the component, the multiplier tends to zero.
\item Given any regular kneading sequence, there is a   component with this sequence and its virtual center is the virtual cycle parameter corresponding to the sequence.
\item Given any hybrid kneading sequence, there is a  component with this sequence;  its virtual center is the virtual cycle parameter corresponding to the first $l$ terms of the sequence .
\end{enumerate}
\end{starthm}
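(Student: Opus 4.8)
The plan is to construct each pseudo-hyperbolic component by ``unfolding'' the virtual cycle parameter prescribed by its combinatorics, following the approach of \cite{FK,ABF} (compare \cite{CJK2,DFJ}). Fix an admissible kneading sequence $*k_1\cdots k_{n-1}$ of one of the three types of Theorem~B. The first step is to produce a parameter $\las$ at which the free asymptotic value is a prepole whose backward itinerary --- read off from the node addresses of the combinatorial tree together with the L-R structure --- is exactly $k_1\cdots k_{n-1}$ (for the hybrid and unipolar types one works instead with the ``regular head'' $k_1\cdots k_l$). Concretely, following the branches of preimages of segments of the imaginary axis dictated successively by $k_1, k_2, \dots$ gives, for each parameter, a holomorphic family of target points, and one checks that the free asymptotic value can be forced onto a pole lying in the branch with the prescribed address. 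Since the poles accumulate only at $\infty$ and are addressed by their order along the imaginary axis, a counting argument using normal families and the argument principle yields such a $\las$; the admissibility conditions of Theorem~B --- the forced zeros in the even slots of a unipolar sequence and the exclusion of $*0k_2 0\cdots 0 k_{n-1}0$ --- are exactly what make the branch returning near $\infty$ compatible with the bouncing part of the itinerary.

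The second step is the unfolding. At $\las$ the would-be cycle is degenerate: one of its points is the terminal pole of the prepole pattern, the next point is $\infty$, and $\infty$ is followed by the free asymptotic value because that value is the asymptotic value of the tract through which the orbit re-enters the plane. Perturbing $\las$ by a small $\epsilon$ breaks the prepole relation, and one must show that a genuine attracting cycle is born. I would establish this with a contraction-mapping argument in logarithmic coordinates built from the local models of $f$: an exponential-type map in each asymptotic tract and a simple pole at the polar asymptotic value. These models yield the estimate that the cycle's multiplier is a product of one polynomially large factor (the derivative at the point near the pole) and one exponentially small factor (the derivative at the point near $\infty$ in the tract over the free value), hence tends to $0$ as $\epsilon\to 0$. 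Since the polar asymptotic value always lies in the Julia set, the free value is the only active singular value, so on the open set of parameters where it is captured by this cycle the dynamics are stable; by \cite{ABF} that open set is a pseudo-hyperbolic component $\Omega$. Continuity of the node addresses and of the L-R structure on parameters whose orbits avoid the Julia set identifies the kneading sequence of $\Omega$ with the prescribed one, and since functions in a common pseudo-hyperbolic component share their kneading sequence, distinct admissible sequences produce distinct components. By Theorem~C, $\Omega$ has a unique virtual center, and because the multiplier tends to $0$ as the parameter tends to $\las$, this virtual center is $\las$; in the regular case $\las$ is the virtual cycle parameter of the full sequence. In the hybrid case the perturbation is run only on the regular head $k_1\cdots k_l$: the bouncing tail $k_{l+1}0\cdots k_{n-2}0$ contributes to the multiplier a product of large factors (near the pole) and small factors (near $\infty$ in the polar tract) that stays bounded away from $0$ and $\infty$ as $\las$ is approached, so the total multiplier vanishes precisely where the head's prepole relation does and the virtual center is the virtual cycle parameter of the first $l$ terms.

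For the unipolar components the entire cycle bounces between a neighborhood of $\infty$ in the polar tract and a neighborhood of the polar asymptotic value; there is no finite virtual cycle parameter and the component is unbounded. Here one runs the fixed-point argument with the parameter itself tending to $\infty$ along a ray in the half-plane: the asymptotics of $f$ in the polar tract together with the simple-pole behavior at the polar value produce, for every sufficiently large such parameter, an attracting cycle realizing the prescribed unipolar itinerary, and the product of derivatives around it is exponentially small in the parameter, so the multiplier tends to $0$ at $\infty$. Combined with Theorem~B this also recovers that the period is odd.

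The main obstacle is the unfolding step in the presence of the essential singularity. Because the broken orbit escapes to $\infty$ and returns through an asymptotic tract, the ordinary implicit function theorem is unavailable and must be replaced by a careful contraction argument in logarithmic coordinates that balances the exponential stretching near the tracts against the contraction near the pole. A secondary difficulty, specific to the hybrid case, is the bookkeeping needed to show that only the regular head governs the location of the virtual center while the bouncing tail merely reshuffles a bounded multiplicative factor; and throughout one must check that the perturbed cycle reproduces the prescribed addresses rather than merely producing some attracting cycle.
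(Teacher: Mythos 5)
Your plan follows essentially the same route as the paper: for the unipolar case one places the parameter so that the iterated exponentials $E^i(-2\lambda)$ land in prescribed horizontal strips (forcing the orbit to bounce between a left half-plane and a neighborhood of $0$ before escaping to the right), and for the regular and hybrid cases one perturbs from the virtual cycle parameter of the address (respectively, of the regular head); in each case the attracting cycle is produced by a contraction estimate built from the exponential model in the tracts and the simple-pole model at $0$ --- the paper implements your ``contraction in logarithmic coordinates'' concretely via the Koebe $1/4$ theorem applied to the inverse of $f^n$ on the pullback $U_0$ of a right half-plane, followed by the Schwarz lemma. One claim in your hybrid discussion is wrong, though harmlessly so: the bouncing tail's contribution to the multiplier does \emph{not} stay bounded away from $0$ and $\infty$ as $\lambda\to\las$. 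Grouping the factors as the paper does in Proposition~\ref{zeroinfcycle}, each pair $f'(z)\cdot f'(f(z))$ with $z$ near a pole (including $0$) tends to $0$, because the exponentially small derivative at the far-away image beats the polynomially large derivative at the point near the pole; so the tail's degeneration is slaved to the head's prepole relation and its contribution tends to $0$ as well, which only reinforces your conclusion that the virtual center is the virtual cycle parameter of the first $l$ terms. Your explicit Step~1 (producing the virtual cycle parameter with prescribed address via the argument principle) is a point the paper leaves implicit, so including it is a reasonable addition rather than a divergence.
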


The paper is organized as follows:

We start by briefly reviewing the basic theory.  The family $\calfp_2$  and its virtual cycles are defined, and then the rest of the first half of the paper is devoted to proving Theorems A and B.  A key first step to understanding the dynamics is to examine the effect of the polar asymptotic value on the L-R structure of the pre-asymptotic tracts of both asymptotic values and constructing the combinatorial tree.  This leads to a proof of theorem A.  We can then define kneading sequences for  functions in $\calfp_2$, which leads into the proof of theorem B.

In the second half of the paper, we turn to the parameter space and prove theorems C and D.  We conclude with some results about parameters in the complement of the pseudo-hyperbolic components and describe some interesting open questions.

The authors would like to thank the referee of an earlier version of this paper whose careful reading and comments have improved the exposition.    

\section{Dynamics}
\label{Basic Dynamics}
 Here we review the basic definitions, concepts and notations we will use.  We refer the reader to standard sources on meromorphic dynamics for details and proofs.  See e.g. \cite{B, BF,BKL1,BKL2,BKL3,BKL4,DK,KK}.

 We denote the complex plane by $\CC$, the Riemann sphere by $\hat\CC$ and the unit disk by $\DD$.  We denote the punctured plane by $\CC^* = \CC \setminus \{  0 \}$ and the punctured disk by $\DD^* = \DD \setminus \{  0 \}$.

\medskip
Given  a family of meromorphic functions, $\{ f_{\la}(z) \}$,  we look at the orbits of points formed by iterating the function $f(z)=f_{\la}(z)$.   If $f^k(z)=\infty$ for some $k>0$, $z$ is called a prepole of order $k$ --- a pole is a prepole of order $1$.  For meromorphic functions, the poles and prepoles have  finite orbits that end at infinity.  The {\em Fatou set or stable set, $F_f$}, consists of those points at which the iterates form a normal family.   The Julia set $J_f$ is the complement of the Fatou set and contains all the poles and prepoles.

\medskip
A point $z$ such that $f^n(z)=z$ is called {\em periodic}.  The minimal such $n>0$ is called the {\em minimal period}. Periodic points are classified by their multipliers, $m(z)=(f^n)'(z)$ where $n$ is the minimal period: they are repelling if $|m(z)|>1$, attracting if $0< |m(z)| < 1$,   super-attracting  if $m=0$ and neutral otherwise.  A neutral periodic point is {\em parabolic} if $m(z)=e^{2\pi i p/q}$ for some rational $p/q$.  The Julia set is the closure of the repelling periodic points.  For meromorphic $f$, it is also the closure of the prepoles, (see e.g. \cite{BKL1}).

\medskip
If $D$ is a component of the Fatou set,  either $f^n(D) \subseteq f^m(D)$ for some integers $n,m$ or $f^n(D) \cap f^m(D) = \emptyset$ for all pairs of integers $m \neq n$.  In the first case $D$ is called {\em eventually periodic} and in the latter case it is called {\em wandering}.   The   periodic cycles of stable domains are classified as follows:
\begin{itemize}
\item Attracting or super attracting if the periodic cycle of domains contains an attracting or superattracting cycle in its interior.
\item Parabolic if the iterates of points in $D$ converge to a parabolic periodic point on its boundary.
\item Rotation if $f^n: D \rightarrow D$ is holomorphically conjugate to a rotation map.  Rotation domains are either simply connected or topological annuli.  These are called {\em Siegel disks and Herman rings} respectively.
\item Essentially parabolic, or Baker, if there is a point $z_{\infty} \in \partial D$ such that for every $z \in D$,  $\lim_{k \to \infty} f^{nk}(z) = z_{\infty}$ and $f$ is not defined at the  point  $z_{\infty}$.
\end{itemize}

\medskip
A point $v$ is a {\em singular value} of $f$ if $f$ is not a regular covering map over $v$.
\begin{itemize}
\item    $v$ is a {\em critical value} if for some $z$, $f'(z)=0$ and $f(z)=v$.
\item    $v$ is an {\em asymptotic value} for $f$ if there is a path $\gamma(t)$ such that \\
$\lim_{t \to \infty} \gamma(t) = \infty$ and $\lim_{t \to \infty} f(\gamma(t))=v$.
\item  $v$ is a {\em logarithmic asymptotic value} if $U$ is a neighborhood of $v$ and there is some component $V$ of $f^{-1}(U \setminus \{v\})$, called an {\em asymptotic tract of $v$}, on which $f$ is a universal covering.\footnote{We identify asymptotic tracts if the asymptotic paths they contain are homotopic in their intersection.}. Isolated asymptotic values are always logarithmic.
\item The {\em set of singular values $S_f$} consists of the closure of the critical values and the asymptotic values.  The {\em post-singular set is
\[P_f= \overline{\cup_{v \in S_f} \cup_{n=0}^\infty f^n(v)}. \]}
For notational simplicity, if a prepole $p$ of order $n$ is a singular value, $\cup_{k=0}^{n} f^k(p)$ is a finite set with $f^{n} (p)=\infty$.
\end{itemize}

\medskip
A standard result in dynamics is that each attracting, superattracting, or parabolic cycle of domains contains a singular value. Moreover, unless the cycle is superattracting, the orbit of the singular value is infinite and accumulates on the cycle.  The boundary of each rotation domain is contained in the accumulation set of the forward orbit of a singular value.  (See e.g.~\cite{M}, chap 8-11 or~\cite{B}, Sect.4.3.)

\medskip

The  meromorphic  functions in the family  $\calfp_2$, which are the focus of this paper, have no critical values and exactly two finite simple asymptotic values, one of which is always pole.    Since the derivative of such a function is never zero, no function in $\calfp_2$ can have a superattracting cycle.

It follows from \cite{DK,BKL4} that functions in  $\calfp_2$   never have wandering domains and from \cite{DK} that they do not have Baker domains.
The arguments in \cite{CK2} that prove  that there are no Herman rings for the family $\la\tan^p z^q$   also show the same is true for the family $\calfp_2$.
 \medskip

\section{The family $\calfp_2$}
\label{family}
Functions with exactly two asymptotic values and no critical values are M\"obius transformations of the exponential function $e^{kz}$ for arbitrary $k\in \mathbb{C}^*$.   Because the dynamics are invariant under conjugation by a conformal isomorphism, we may fix the essential singularity at infinity and set $k=2$.   We will restrict to those functions for which one asymptotic value is a pole, and we may take that pole to be $0$.  If the other asymptotic value is denoted by $\la \in \CC^*$, the function can be written as
$$ f_{\la}(z)=f(z) = \frac{\lambda}{1-e^{-2z}}. $$
Set $\calfp_2=\{ f_{\la}(z), \la \in \CC^* \}$.
 Since the asymptotic value $0$ is a pole, the dynamics of $f_\lambda$ are determined only by the orbit of $\lambda$.   We call this the family of {\em unipolar maps}.

\subsection{Combinatorics of the prepoles}
Note that $$f_\lambda: \mathbb{C}\to \widehat{\mathbb{C}}\setminus \{0,\lambda\}$$ is a universal covering and therefore there are infinitely many inverse branches defined in a neighborhood of  every point except the two asymptotic values.   These preimages are disjoint if the neighborhoods are small enough and the local map is a homeomorphism.   Punctured neighborhoods of the asymptotic values have a simply connected preimage, the asymptotic tract, that is a universal covering map onto the punctured neighborhood.    The asymptotic tracts for $0$ and $\la$ are respectively, for some $M>0$, half planes $L_M=\{z | \Re z < -M \}$ and $R_M=\{z | \Re z > M \}$.  

Although the inverse branches are defined locally, they can be continued analytically.  Because  it is difficult to keep track of these individual branches as $\la$ varies, we use the poles and prepoles to obtain a combinatorial tree-like structure to understand both the dynamics and parameter space for these functions.  

 Assume first that $\la \not\in \Im=\{z=is, s \in \RR\}$ and that $\la$ is not a prepole. 
The poles of every $f_{\la}$ are the same:  $p_k= k\pi i$, $k \in \ZZ$.   There are  curves $C^{\pm}_k$ with a common endpoint at each pole $k \pi i$ that are connected preimages\footnote{A small neighborhood of each pole maps to a neighborhood of infinity and the inverse of this map takes the imaginary axis to curves ending at the pole.  The curves $C^{\pm}_k$ are then defined by analytic continuation.}    of the positive and negative imaginary axes, $\Im^{\pm}$.   They have a common tangent at $k\pi i$ in a direction perpendicular to $\arg \la$.  Set $C_k = C_k^- \cup C_k^+ \cup \{k \pi i\}$.  The $C_k$ are disjoint and $C_{k+1}=C_k + \pi i$.    

Each $C_k$ contains  one preimage of each pole, a prepole of order $2$.  
Because the non-zero poles are preimages of $\infty$,   they have a   natural the ordering  coming from the imaginary axis;    label them   by $p_{kj}$ where $f(p_{kj})= p_j$.  Since the poles accumulate to $\infty$,   $\lim_{j \to \pm\infty} p_{kj} = p_k$.   The segments of $C_k$ from the prepoles $p_{k1}$ and $p_{k -1}$  to $\infty$ contain no prepoles so $p_{k0}$ is not defined.  

At each prepole $p_{kj}$, there is a curve that is a connected preimage of $C_{kj}$; it contains prepoles of order $2$ labeled so that $p_{kji}=p_{ji}$,   As above, the prepoles of order $3$ accumumlate on prepoles of order $2$, and the prepoles $p_{kj0}$ are not defined.  
Continuing inductively, we assign a unique {\em address}  $k_1 \ldots k_n$, $k_j \in \ZZ, k_n \neq 0$, to each prepole of order $n$.  

 Since $f(p_{k_1 \ldots k_nk_{n+1}})=p_{k_1 \ldots k_n}$, $f$ induces a shift map on the space of finite sequences of  integers.   
\medskip

Next, suppose $\la = it$,  for some $t \in \RR$, not a multiple of $\pi$. Assume first that $t>0$. 
    Note that if $x \neq 0 \in \RR$,  $f(x) = it/(1-e^{-2x}) \in \Im$.   As $x \to 0$, $f(x) \to \infty$, and as above we can find preimages $p_{0k} \in \RR$, $|k|>>0$  of the poles $p_k$.   Since $\lim_{x \to -\infty} f(x) = 0$,  
     $f(\RR^-)=\Im^-$ and all the negative poles have preimages in  $\RR^-$; they are labeled $p_{0j}$, $j=-1,-2, \ldots$.  Because, however, $\lim_{x \to +\infty}f(x) = it$,  $\RR^+$ does not constitute a full preimage of $\Im^+$.   While the full  analytic continuation of the preimage of $\Im^+$  is $\RR^+$,  if $t> m\pi$ for some $m>0$,  it does not contain the preimages of the poles between $0$ and $it$.   The prepoles on $\RR^+$ are thus labeled $p_{0j}$, $j  =m+1, m+2, \ldots $.
     
    The preimages of the missing interval, $\Lambda=(0, it)$, are the horizontal lines $\ell_n^+=x+ i(2n+1)\pi/2$, and if $t > m\pi$, each of these lines contains preimages of the poles $p_1, \ldots, p_m$.    We make the convention that   the prepoles on the line $\ell_0^+=x+i\pi/2$ are labeled by $p_{0j}$, $j=1, \ldots, m$.

If $t<0$, the roles of $\RR^{\pm}$ are interchanged and the prepoles are labeled accordingly;  in this case we relabel the horizontal lines as $\ell_n^-=x+ i(2n-1)\pi/2$, and make the convention that  the prepoles on the line $\ell_{0}^-=x-i\pi/2$ are labeled by $p_{0 -j}$, $j=1, \ldots, m$.     

In either case,  we set $C_0= \RR^- \cup \RR^+ \cup \ell  \cup \{0\}$ where  $\ell=\ell^{\pm}$ as $t$ is greater or less than $0$, so that $C_0$ contains a single preimage $p_{0j}$  of each pole. 

Next, for each  $ k \in \NN$, we   set $C_k=C_0 + k\pi i$,\footnote{By abuse of notation, we continue to call $C_k$ a ``curve'', although  it may not be connected.}.     The $C_k$ are disjoint  and each contains a single preimage $p_{kj}$ of each $p_k$.  
Continuing inductively, we define curves $C_{k_1 \ldots k_n}$ and assign a unique {\em address} $k_1 \ldots k_n$ to each prepole of order $n+1$ in   $C_{k_1 \ldots k_n}$.  The map $f$ induces a right shift map on addresses.  

If $\la=i m \pi$    it has no preimages. The discussion and addressing scheme above works the same way;  however,  there will be no poles whose rightmost entry is $m$.

\subsection{L-R Structure of the pre-asymptotic tracts in the dynamic plane}

In this section we see how the asymptotic tracts of $0$ and $\la$ and their preimages are deployed in the dynamic plane of  $f=f_{\la}$.  We have normalized the functions so that the asymptotic tract of $0$ is always the left half-plane $L$ and that of $\la$ is the a right half-plane $R$.   In order to put a structure on the preimages of these tracts, it will be useful to look at subsets of these half-planes bounded by vertical lines.  By abuse of notation these are also called the asymptotic tracts.    How the preimages of these half-planes are arranged changes with $\la$.   We call the preimages, or pullbacks, of
$L$ and $R$ obtained by iteration using all the branches of $f_{\la}^{-1}$, the {\em  L-R structure of $f_{\la}$}. The L-R structure is not only useful in understanding the dynamic plane, but as we shall see later, in understanding the parameter plane.  The case $\Im \la =0$ is special and its dynamics will be treated separately. 

 Given $M>0$, set $R_M= \{ z\  |\ \Re{z} >M \}$ and $L_M= \{ z \ |\ \Re{z} <- M \}$;  these are asymptotic tracts for $\la$ and $0$ respectively.  By our convention on notation, if
$M_1>M$,  $R_{M_1}, L_{M_1}$ are also asymptotic tracts of $\la$ and $0$ respectively.
  Unless it causes confusion we will write $L=L_M, R=R_M$.  
    We will use the addresses of the   prepoles   to label the preimages of the tracts.

There are preimages of each of the asymptotic tracts at each of the poles $p_k$;  label  these respectively $L_k=f^{-1}(L)$ and $R_k=f^{-1}(R)$.    Because  $L$ and $R$   have a common boundary point at infinity where they are parallel to the imaginary axis, the  $L_k$ and $R_k$ meet at the poles where their common tangent  is in the direction of  $C_k$ and is perpendicular to the interval $\Lambda$ joining the asymptotic values.   Orient the boundaries of  half planes $L$ and $R$ so that up is positive.   Pulling back, this induces an orientation on the 
   boundaries of $L_k$ and $R_k$.
  
As long as $k_1,k_2 \neq 0$, the inverse maps at the poles are local homeomorphisms.  We label the preimages of $L_{k_1}$ and $R_{k_1}$ at the pole $p_{k_1k_2}$ as $L_{k_1k_2}$ and $R_{k_1k_2}$ respectively and continue inductively to obtain preimages $L_{k_1k_2 \ldots k_n}$ and $R_{k_1k_2 \ldots k_n}$ at the prepole $p_{k_1k_2 \ldots k_n}$.

 Because $0$ is both an asymptotic value and a pole,  it plays a special role.  Since $f$ maps $L$ as a universal cover to a punctured neighborhood   $V$ of $0$ and $V$ has non-empty intersections with $L_0$ and $R_0$, there are infinitely many preimages of both $L_0$ and $R_0$ in $L$.  In order to label them, 
recall that $\RR^{\pm}$  are asymptotic paths for the asymptotic values. 

 Pulling back, there is a preimage of $L_0$ containing  $\RR^-$; 
 label this preimage $L_{00}$. 
The boundary of $L_{00}$ is a bi-infinte curve both of whose ends at $-\infty$ are asymptotic to the horizontal lines $x \pm \pi i/4$. This implies the curve $C_0$  intersects $L_{00}$.   Set $L_{k0}=L_{00}+k\pi i$ 

A vertical line far to the left alternately intersects the preimages of $R_0$ and $L_0$.  The natural candidates for $R_{00}$ are the preimages of $R_0$ directly above and below $L_{00}$.    To be consistent with our convention for the addresses of the poles, if $\Im \la >0$, we label the preimage above $L_{00}$ by $R_{00}$ and if $\Im \la < 0$, we label the preimage below $L_{00}$ by $R_{00}$.    We will see that the case $\Im \la=0$ is special it won't matter which preimage we choose as $R_{00}$.  

The boundary of $R_{00}$ is  a bi-infinite curve in a left half plane symmetric about the horizontal line $x \pm i \pi /2$ where the sign depends on $\Im \la$.  It intersects $C_0$.  

\medskip

There are preimages of   $L_{k0}$ and $R_{k0}$ for each $k$ that intersect $L_{0}$; label them  $L_{0k0}$ and $R_{0k0}$. 
Inductively, going back and forth from $0$ to $-\infty$, using the inverse branches that take $L$ to $L_0$ and $L_0$ to $L_{00}$ and writing  $k_i$ for the non-zero indices, we obtain sets 
 $$ L_{0k_20\cdots k_n 0} \mbox{ and } R_{k_10k_20\cdots k_n 0}\subset L_{0\cdots k_{n-1}0} $$ at the origin intersecting $L_0$   and  assuming we are restricting the sets to their intersections with half-planes far enough to the left, we have sets 
$$ L_{k_10k_20\cdots k_n 0} \mbox{ and } R_{k_10k_20\cdots k_n 0}\subset L_{k_10\cdots k_{n-1}0} $$
intersecting the left half-plane and meeting at $-\infty$.  
Note that the relative ordering of the sets  at a given level is determined by the initial choice of $R_{00}$. 

Pulling back each of the sets $$ L_{0k_20\cdots k_n 0} \mbox{ and } R_{0k_20\cdots k_n 0}$$ to the pole $p_{k_1}$, we obtain sets $$ L_{k_1k_20\cdots k_n 0} \mbox{ and } R_{k_1k_20\cdots k_n 0}.$$  Then either 
$ R_{k_1k_20\cdots k_n 0}\subset L_{0\cdots k_{n-1}0} $ or $R_{k_1k_20\cdots k_n 0}$ and $R_{0\cdots k_{n-1}0} $   coincide. 
We will see later that the latter case occurs when there is an attracting periodic cycle.  
    
     These sets  in turn, can be pulled back to the prepole $p_{k_1k_2 \ldots k_l}$. 
 \begin{defn} We call the full collection of preasymptotic tracts
$$L_{k_1\cdots k_{l} k_{l+1} 0 k_{l+3}0\cdots k_{n-2} 0} \text{ and } R_{k_1\cdots k_{l} k_{l+1} 0 k_{l+3}0\cdots k_{n-2} 0} $$
the {\em L-R structure associated to $f$} and the subscript, $k_1\cdots k_{l} k_{l+1} 0 k_{l+3}0\cdots k_{n-2} 0 $ the {\em  index} of the set $L_{k_1\cdots k_{l} k_{l+1} 0 k_{l+3}0\cdots k_{n-2} 0} $.
\end{defn}

\begin{remark}\label{RLorder}  Since by our convention, if $\Im \la >0$,  $R_{00}$  is above $L_{00}$,  going counterclockwise, $L_{000}$ comes before $R_{000}$ and vice versa if $\Im t < 0$. Thus  the relative order of the $L_{0k}$ and $R_{0k}$ in $L$ determine the relative order of their preimages at $0$.   See figures~\ref{RLstruct} and~\ref{RLstruct1}.  Note that for this value of $\la$, the sets $R_{01k}$ coincide. 
\end{remark}

   \begin{figure}

\includegraphics[width=9in]{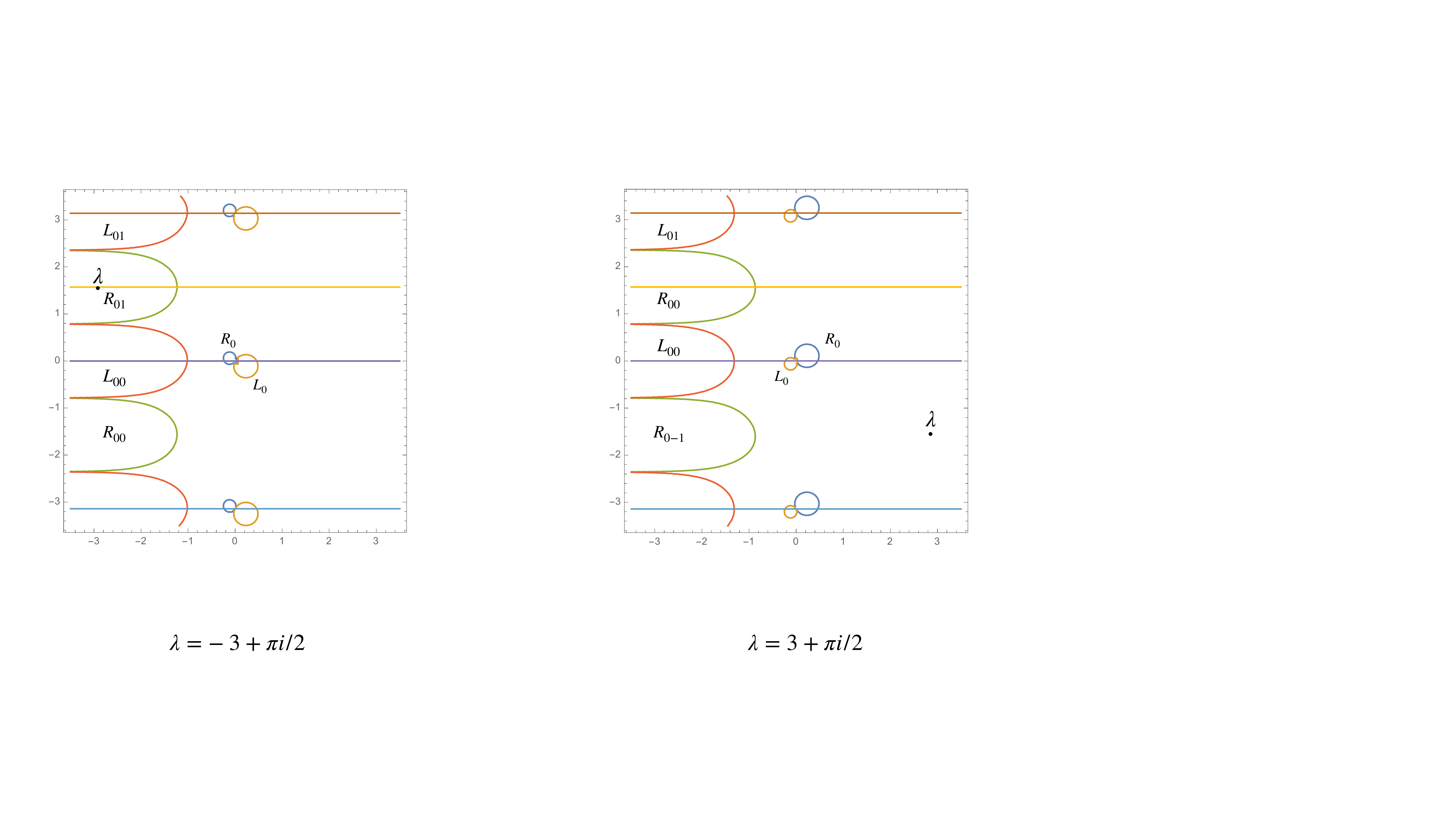}
\caption{ The first two pullbacks of the left and right halfplanes in the L-R structures of $f_{\la}$ for $\la=-3+i\pi/2$ on the left and $\la=-3+i\pi/2$ on the right. }

 \label{RLstruct}

\end{figure}

  \begin{figure}

\includegraphics[width=8in]{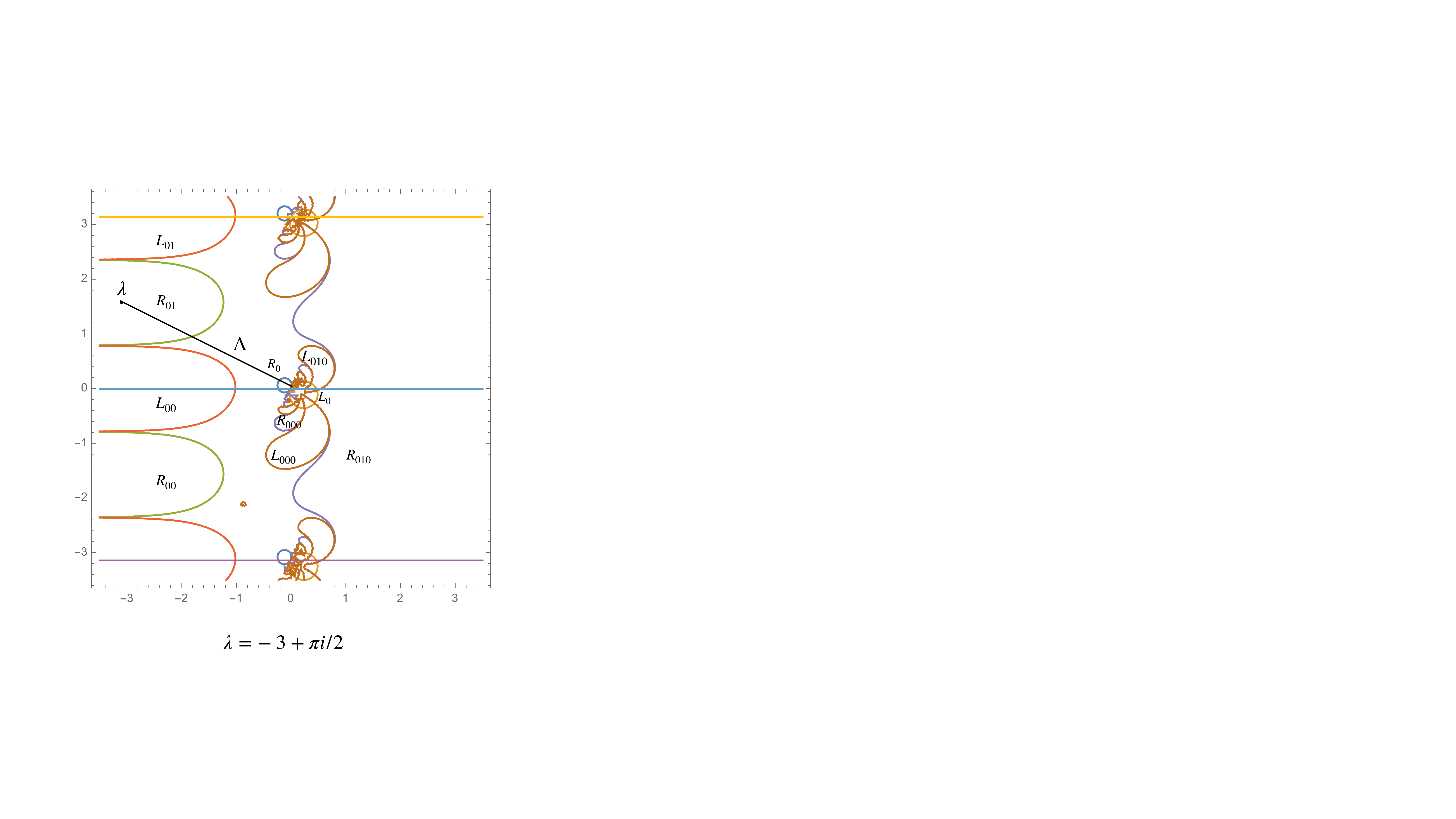}
\caption{ Three pullbacks of the left and right half plane in L-R structure of $f_{\la}$ for $\la=-3+i\pi/2$. }

 \label{RLstruct1}

\end{figure}

\subsection{Virtual cycles}\label{virtcyc}
If $v(\la)$ is a logarithmic asymptotic value of a holomorphic function $f$, it is locally omitted and some inverse branch  is logarithmic. Though it cannot belong to a  periodic  cycle,   $v(\la)$ may be part of one or more periodic cycles defined in a limiting sense; these limits are called a ``virtual cycles''.   This phenomenon has been studied in a number of previous works, in more or less generality (see \cite{ABF,FK, KK, CK1,CK2, CJK1,CJK2, CJK3}.

Here we give a definition in the context of the family $\calf_2$ of meromorphic functions with two asymptotic values and no critical values.  Note that both asymptotic values are omitted.

 \begin{defn}\label{stdvc} Suppose $f \in \calfp_2$ and  $\la$ is one of its asymptotic values.  If there exists an asymptotic path $\gamma_\lambda(t)$ of $\lambda$, such that $$\lim_{t\to\infty}f(\gamma_\lambda(t))=\lim_{t\to\infty}f^{n+2}(\gamma_\lambda(t))=\lambda,$$ then the set $\{ \la, f(\la), \ldots, f^{n-1}(\la), \infty \}$ is called a {\em virtual cycle for $f$ of period n+1}.  
\end{defn}

Suppose that $f^n(\lambda)=\infty$  and that $\delta_{\la}(t)$ is any asymptotic path  in the asymptotic tract of $\lambda$.   Let $\delta_1(t)$ be the component in the  set of preimages,  $f^{-n}(\delta_{\la}) $,  that has an endpoint at $\lambda$.    Any component of its preimage,  $\gamma_{\la}(t) \in f^{-1}(\delta_1(t))$,  lies in the asymptotic tract of $\la$  and satisfies $$\lim_{t\to \infty}f(\gamma_\lambda(t))=\lim_{t\to \infty}f^{n+2}(\gamma_\lambda(t))=\lambda.$$ That is, if $f^n(\lambda)=\infty$, then $\{ \la, f(\la), \ldots, f^{n-1}(\la), \infty \}$ is a virtual cycle. 

The condition in the definition above is that  the path $\delta(t)=f^{n+1}(\gamma_{\la}(t))$ is again an asymptotic path for $\la$.  It may happen however,   that $f^{n}(\la)=\infty$ but $\delta(t)=f^{n+1}(\gamma_{\la}(t))$ is an asymptotic path for the other asymptotic value  $\mu$ so that, in the limit, $f^{n+2}(\la)=\mu$.   If this does NOT happen, the cycle is called  {\em virtual cycle of minimal length} (see \cite{ABF}).   If it does, the virtual cycle may contain ``subvirtual cycles'', which makes the situation more complicated and more interesting as we will see below.    

In \cite{ABF,FK,KK}  virtual cycles occur for  functions in  families where periodic cycles have ``disappeared''; that is,  nearby functions in the family have  periodic cycles that become ``virtual''.   In the tangent family $\la \tan z$, for instance, suppose the parameter $\la_0=i \pi/2$.  Then if  $\la=\la_0+i\epsilon$, $\epsilon >0$, each asymptotic value $\pm i\la$ is attracted to a period two cycle of $f_{\la}$ whereas if   $\la=\la_0-i\epsilon$,  both asymptotic values are attracted to the same period four cycle of $f_{\la}$.  Thus, in the limit, as $\epsilon \to 0^+$, the period two cycles tend to a minimal virtual cycle but if $\epsilon \to 0^-$, the limit is a single (non-minimal) virtual cycle of period $4$. (See \cite{CJK1} where this phenomenon is called ``merging'').  We will make  this more precise for functions in  $\mathcal{FP}_2$ in section~\ref{paramsection}.    In the meantime, we continue our discussion of the dynamics of  functions in $\mathcal{FP}_2$ and leave the dependence on $\la$ for later.

\subsubsection{Virtual cycles in $\mathcal{FP}_2$}
\label{virtcycles}

Definition~\ref{stdvc} with $\la$ replaced by $0$ becomes 
\begin{defn}\label{defzeroinfcycle} Let $\gamma_0(t)$ be an asymptotic path for $0$ such that 
$$\lim_{t\to\infty} f(\gamma_0(t))=\lim_{t\to \infty}f^3(\gamma_0(t))=0.$$ Then $\{ -\infty, 0\}$ is a virtual cycle of period $2$ and is called the  {\em zero  virtual cycle}.   

\end{defn}
Every function in $\mathcal{FP}_2$ has a  zero virtual cycle.\footnote{Such cycles are called {\em persistent} in \cite{ABF}.}
\begin{prop}\label{0per2} Every $f \in \mathcal{FP}_2$ has a zero virtual cycle with minimal period $2$.  \end{prop}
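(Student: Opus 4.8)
The plan is to exhibit an explicit asymptotic path $\gamma_0(t)$ for the asymptotic value $0$ satisfying the condition in Definition~\ref{defzeroinfcycle}, and then to verify that the period cannot be $1$, so that $2$ is minimal. First I would recall the normalization $f_\la(z)=\la/(1-e^{-2z})$, so that the asymptotic tract of $0$ is the left half-plane $L=L_M$, and any path $\gamma(t)$ with $\Re\gamma(t)\to-\infty$ is an asymptotic path for $0$: indeed $e^{-2z}\to\infty$ as $\Re z\to-\infty$, hence $f_\la(z)\to 0$. The key observation is that $0$ is itself a pole of $f_\la$ (the ``polar asymptotic value''), so $f_\la$ maps a punctured neighborhood $V$ of $0$ onto a neighborhood of $\infty$; equivalently, near $z=0$ we have $f_\la(z)=\la/(1-e^{-2z})\sim -\la/(2z)\to\infty$. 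Because $L$ is a universal cover of the punctured neighborhood $V$ of $0$ under $f_\la$, there is a branch of $f_\la^{-1}$ carrying $V\setminus\{0\}$ back into $L$, with $0$ corresponding to the asymptotic endpoint at $-\infty$. This is exactly the branch used to define the set $L_{00}$ in the L-R structure (it is the preimage of $L_0$ containing $\RR^-$).

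With this in hand the construction is short. Take any asymptotic path $\delta_0(t)$ for $0$ lying in $L$ with endpoint at $-\infty$; pulling it back once by the branch that sends $L$ to $L_0$ (the preimage of $L$ at the pole $p_0=0$) gives a path $\delta_1(t)$ in $L_0$ with an endpoint at $0$, since $f_\la(0)=\infty$ is not the issue --- rather $0\in\partial L_0$ is the point over $\infty$... more carefully: $\delta_1(t)$ is a component of $f_\la^{-1}(\delta_0)$ landing at the prepole, but since $0$ is itself the relevant pole we take the component of $f_\la^{-1}(\delta_0(t))$ accumulating at $0$. Pulling $\delta_1$ back once more by the branch $L\to L_{00}$ yields $\gamma_0(t)\subset L$ with $\Re\gamma_0(t)\to-\infty$, hence an asymptotic path for $0$, and by construction $f_\la(\gamma_0(t))=\delta_1(t)\to 0$ as well, and $f_\la^3(\gamma_0(t))=f_\la(\delta_0(t))\to 0$. (Here I would invoke the general discussion following Definition~\ref{stdvc}: since $f_\la^0(0)$ already ``equals'' $\infty$ in the degenerate sense $n=0$, i.e. $0$ is a pole, the recipe there with $n=0$ produces precisely such a $\gamma_0$.) This shows $\{-\infty,0\}$ is a virtual cycle of period $2$.

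Finally I would argue minimality: period $2$ means we must rule out period $1$, i.e. that $\{0\}$ alone (or $\{-\infty\}$ alone) is a genuine or virtual fixed configuration in the relevant sense. A virtual cycle of length $1$ would require an asymptotic path $\gamma_0(t)$ of $0$ with $f_\la(\gamma_0(t))\to 0$ being itself immediately an asymptotic path for $0$ with the property $\lim f_\la(\gamma_0(t))=\lim f_\la(\gamma_0(t))$ trivially --- more to the point, $0$ being a fixed point would force $f_\la(0)=0$, but $f_\la(0)=\infty\neq 0$ since $0$ is a pole; and $\infty$ is an essential singularity, not a fixed point in the domain. Thus the configuration genuinely closes up only after two steps $0\mapsto\infty\mapsto 0$ (in the limiting sense), so the minimal period is exactly $2$. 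The only real subtlety --- and the step I expect to need the most care --- is making rigorous that the iterated pullback $\gamma_0(t)$ is a bona fide asymptotic path (its real part actually tends to $-\infty$ rather than merely landing at $0$ from the wrong side), which follows from the description of $\partial L_{00}$ as a bi-infinite curve whose two ends are asymptotic to the horizontal lines $x\pm i\pi/4$, already established in the subsection on the L-R structure.
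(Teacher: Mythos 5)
Your proposal is correct and follows essentially the same route as the paper: the paper's proof simply chooses an asymptotic path $\gamma_0$ inside $L_{00}$ and observes that $f(\gamma_0)\subset L_0$ tends to $0$ while $f^2(\gamma_0)\subset L$ tends to $-\infty$, so $f^3(\gamma_0)\to 0$; your double-pullback construction produces exactly such a path in $L_{00}$ and verifies the same two limits. The extra remarks on minimality ($f_\la(0)=\infty\neq 0$) are fine but not part of the paper's argument.
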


\begin{proof}  Choose the asymptotic path $\gamma_0(t)$ inside the $L_{00}$ region of the L-R structure for $f$.  Then $f(\gamma_0(t)) \subset L_0$, $f^2 (\gamma_0(t)) \subset L$ and so is an asymptotic path for $0$. 
\end{proof}
 Note that if the asymptotic path $\gamma_0(t)$ is chosen inside the $R_{00}$ region of the L-R structure for $f$, $f^2 (\gamma_0(t)) \subset R$ and so is an asymptotic path for $\la$.

Although every $f \in \mathcal{FP}_2$ has a zero virtual cycle,  for most of these functions it is the only virtual cycle.  There are, however,  a set of values of $\la$ such that  $\{ \la, f(\la), \ldots, f^{n-1}(\la), \infty \}$  so that $f$ has another virtual cycle  whose  period is  minimal and is $n+1$.  
   As we will see in section~\ref{paramsection},  such an  $f$  also has virtual cycles of (non-minimal) periods $n+1+2m$ for every $m$.  Like the example $\pi/2 i \tan z$, these non-minimal cycles contain both asymptotic values  $\la$ and $0$.   The argument in the proof above implies that the difference between the minimal and non-minimal periods is always even because  the zero virtual cycle has period $2.$  This leads us to introduce the following definition.

 \begin{defn}\label{lavcs}  Suppose $\la$ is a prepole of $f=f_{\la}$ of order $n$ and  $\gamma_{\la}(t)$ is an asymptotic path for $\la$.
  \begin{enumerate}
  \item If $$\lim_{t \to \infty} f(\gamma_\lambda(t))=\lim_{t \to \infty}  f^{n+2}(\gamma_{\la}(t))=\lambda,$$  then the minimal cycle of period $n$,  $\{ \la, f(\la), \ldots , +\infty \}$  is called a {\em regular virtual cycle}.
    \item If there exists an $m$ such that $$\lim_{t \to \infty} f(\gamma_\lambda(t))=\lambda,$$ 
    $$\lim_{t \to\infty } f^{n+1+2j}(\gamma_{\la}(t))=-\infty,  \lim_{t \to \infty} f^{n+2+2j}(\gamma_{\la}(t))=0 $$ but $$ \lim_{t \to \infty} f^{n+2m}(\gamma_{\la}(t))=+\infty,$$
      then, as a limit, the cycle $$\{\la, f(\la) ,  \ldots , f^{n}(\la),   -\infty,0 , -\infty, \ldots,0, +\infty \}$$  is called an {\em $n+2m+1$ hybrid virtual cycle}.

    \end{enumerate}
    \end{defn} 
     Figures~\ref{per2vir} and~\ref{per4vir}  show schematics of virtual cycles of periods $2$ and $4$ at $\la = \pi i$.  
 \begin{figure}
    \centering\includegraphics[width=6in]{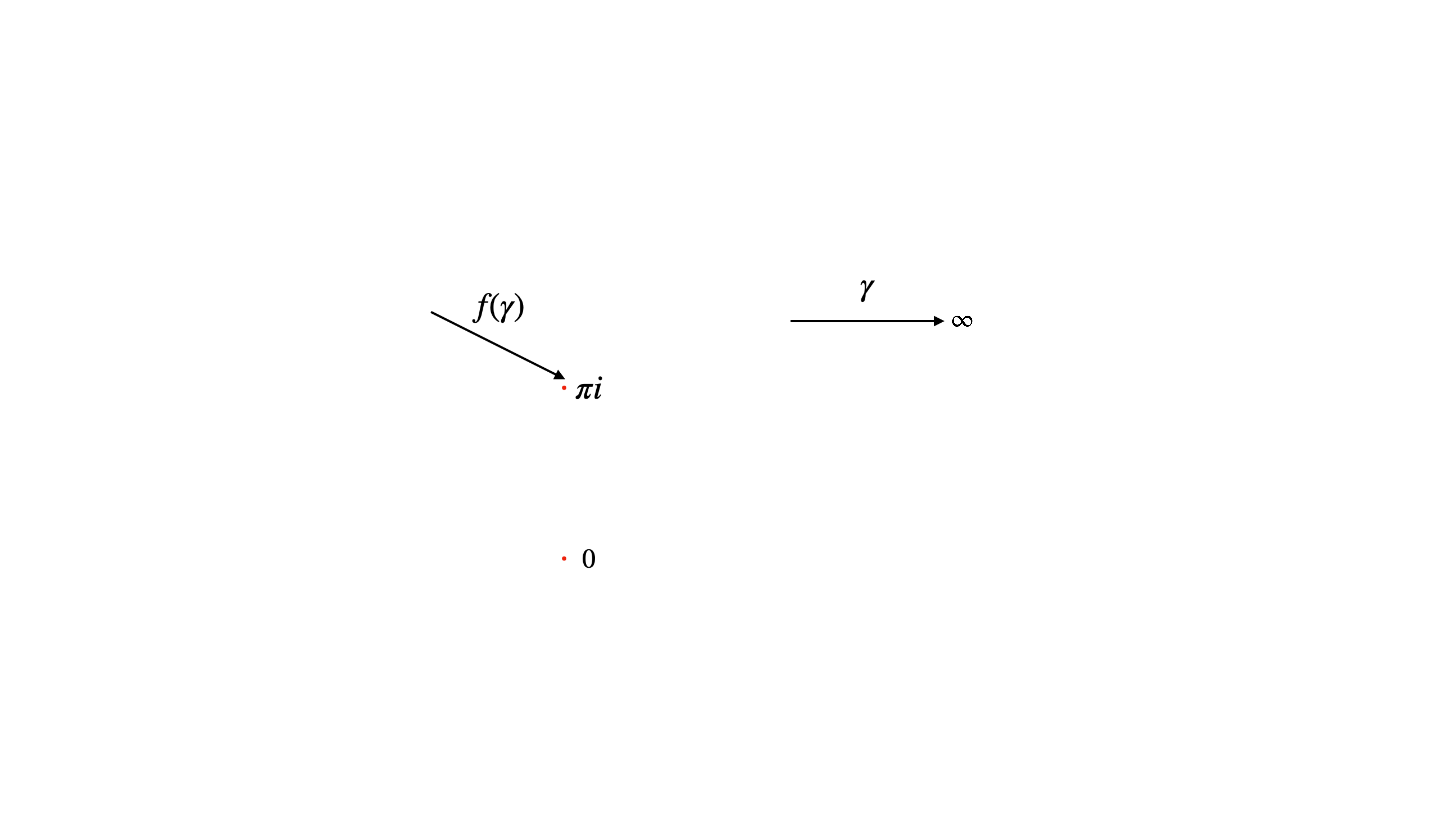}
     \caption{Virtual cycle period $2$ $\ \ \ \ \ \ \ \ \ \ \ \ \ \ \ $\label{per2vir}}

  \end{figure}
  \begin{figure}
    \includegraphics[width=6in]{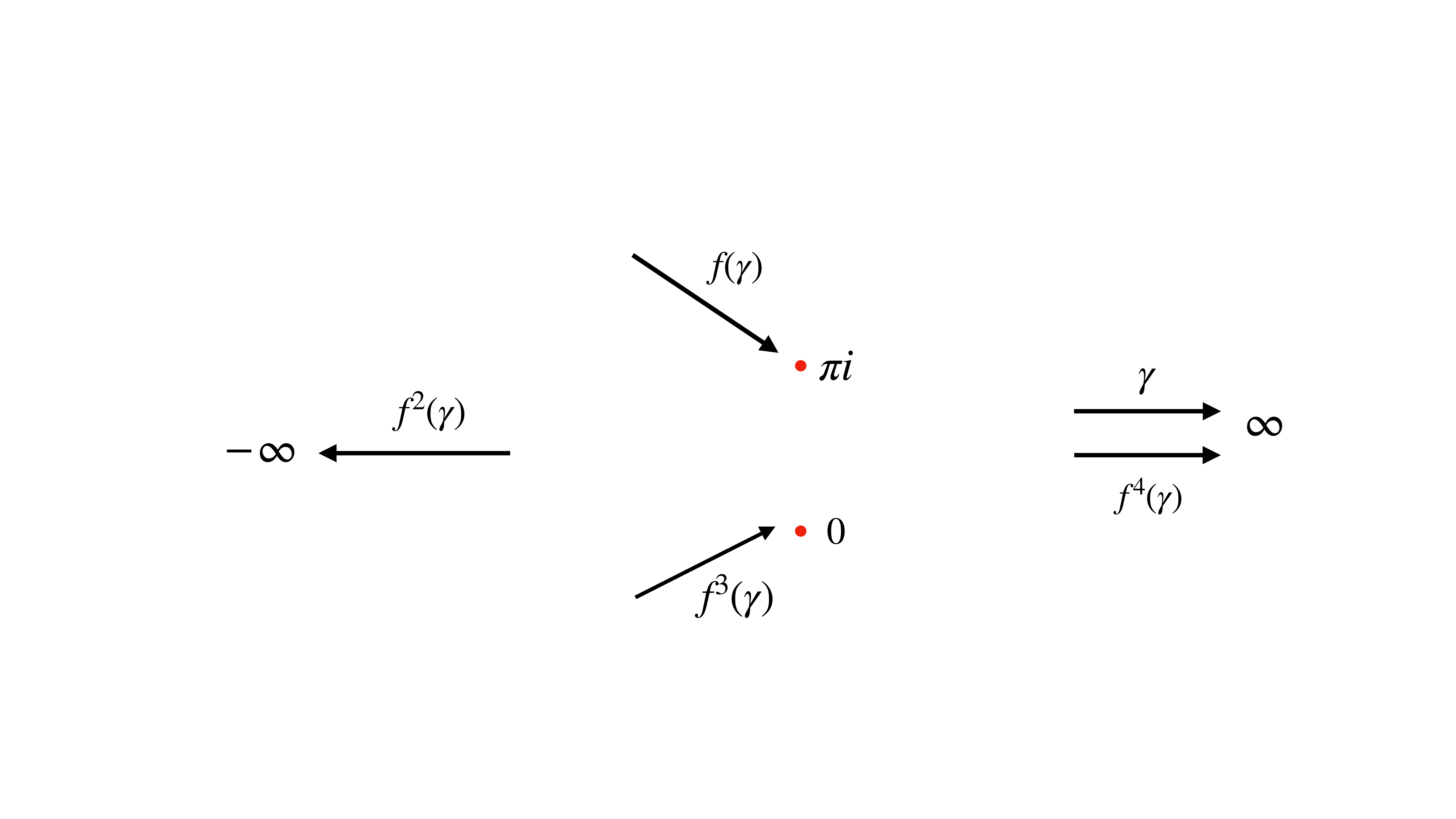}
     \caption{                                               Virtual cycle period $4$\label{per4vir}}

  \caption{}
  \label{virtualcycles1}
\end{figure}

Note that the points in the zero cycle are always in the Julia set and, if $f_{\la}$ has a virtual cycle containing $\la$, the points of its virtual cycle are also in the Julia set.

\subsubsection{Virtual multipliers}
\label{virtmult}
 The following estimates for $z$ near the zero cycle will used a number of times  throughout the paper:\\

 If $\Re z< M<<0, $
\begin{equation}\label{basic estimate}
f(z)= \lambda/(1-e^{-2z})\sim -\lambda/e^{-2z}\  {\mbox  and } \ f^2(z)\sim -e^{-2z}/2
\end{equation}
where the approximation depends on $M$ and is an equality in the limit as $M$ goes to $-\infty$.

If $\{z_0, z_1=f(z_0) \ldots,z_{n} =f^{n}(z_0)) \}$ is a periodic cycle of period $n+1$, its {\em multiplier} is $(f^{n})'(z_0)$.    Virtual cycles have ``virtual multipliers'' in the following sense:

\begin{defn}
If $\la$ is a prepole of order $n$ and $\gamma_\lambda(t)$ is an asymptotic path for $\la$, 
  set
$\calm(t)= (f^{n+1})'(f(\gamma_\la (t)))$.  Then if $\lim_{t \to \infty} \calm(t)$ exists, it is called the {\em virtual multiplier} of the cycle.  The multiplier of the zero virtual cycle is defined similarly.   
\end{defn}

\begin{prop}\label{zeroinfcycle}   $\lim_{t\to \infty} \calm(t)=0$.
\end{prop}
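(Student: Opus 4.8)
The plan is a direct computation: expand $\calm(t)=(f^{2})'\bigl(f(\gamma_0(t))\bigr)$ by the chain rule around the two legs $0\to\infty\to 0$ of the zero cycle, and then read off the leading behaviour from \eqref{basic estimate}. First I would fix the path: as in the proof of Proposition~\ref{0per2}, choose $\gamma_0(t)$ inside the region $L_{00}$, and set $z_1(t)=f(\gamma_0(t))$ and $z_2(t)=f^{2}(\gamma_0(t))=f(z_1(t))$. Then $z_1(t)\to 0$, while $z_2(t)\subset L$ is again an asymptotic path for $0$; since $f(z)\to 0$ on the tract $L=\{\Re z<-M\}$ forces $\Re z\to-\infty$, we get $\Re z_2(t)\to-\infty$ and $f(z_2(t))=f^{3}(\gamma_0(t))\to 0$.

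Next I would rewrite $f'$ in a form adapted to the cycle. Differentiating $f(z)(1-e^{-2z})=\lambda$ and eliminating $e^{-2z}$ via $e^{-2z}=\bigl(f(z)-\lambda\bigr)/f(z)$ gives the Riccati identity $f'(z)=\tfrac{2}{\lambda}\,f(z)\bigl(\lambda-f(z)\bigr)$, which is convenient because it expresses the derivative through the values of the iterates. The chain rule then gives
\[
\calm(t)=f'(z_2)\,f'(z_1)=\frac{4}{\lambda^{2}}\,f(z_2)\bigl(\lambda-f(z_2)\bigr)\,z_2\bigl(\lambda-z_2\bigr),
\]
using $f(z_1)=z_2$. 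Now I insert \eqref{basic estimate} at $z_2$ (valid since $\Re z_2\to-\infty$): $f(z_2)\sim-\lambda e^{2z_2}\to 0$, so $\lambda-f(z_2)\to\lambda$, and $\lambda-z_2\sim-z_2$ because $|z_2|\to\infty$; substituting,
\[
\calm(t)\sim\frac{4}{\lambda^{2}}\bigl(-\lambda e^{2z_2}\bigr)\,\lambda\,z_2\,(-z_2)=4\,z_2^{\,2}\,e^{2z_2}.
\]

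Finally I would conclude that $|\calm(t)|\sim 4\,|z_2|^{2}\,e^{2\Re z_2}\to 0$, the exponential decay overwhelming the polynomial growth. Concretely, because $\gamma_0(t)$ runs to infinity inside $L_{00}$, the point $z_2\sim-\tfrac12 e^{-2\gamma_0(t)}$ (again by \eqref{basic estimate}) tends to infinity within a sector about the negative real axis, so that $-\Re z_2$ is comparable to $|z_2|$ and hence $|z_2|^{2}e^{-2|\Re z_2|}\to 0$. The step I expect to require the most care is exactly this geometric control on how $\gamma_0(t)$ lies inside $L_{00}$: one must check that the estimates of \eqref{basic estimate} hold uniformly along the path and that $-\Re z_2$ genuinely outweighs $\log|z_2|$ (without such control one could build asymptotic paths realizing the zero cycle along which $|\calm(t)|\not\to 0$, though for those the limit $\lim_t\calm(t)$ does not even exist, since $\Im z_2\to\infty$ makes the argument oscillate). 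Everything else is routine bookkeeping with the chain rule and \eqref{basic estimate}.
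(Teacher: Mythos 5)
Your computation is correct as far as it goes, and the identity $f'(z)=\tfrac{2}{\la}f(z)\bigl(\la-f(z)\bigr)$ is a clean way to organize it (one can check it reproduces the leading term $4z_2^2e^{2z_2}$ obtained from the paper's local expansions). But it only proves the proposition for the zero virtual cycle $\{-\infty,0\}$, i.e.\ for $\calm(t)=(f^{2})'\bigl(f(\gamma_0(t))\bigr)$. As $\calm(t)$ is defined immediately before the proposition, the statement concerns a parameter $\la$ that is a prepole of order $n$, so that $\calm(t)=(f^{n+1})'\bigl(f(\gamma_\la(t))\bigr)$ is a product of derivative factors taken along the whole regular or, in the hybrid case, extended virtual cycle $\{\la,f(\la),\dots,\infty,(-\infty,0,\dots)\}$. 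That general version is what the paper needs later (virtual centers of shell components have multiplier tending to $0$), and its proof requires two ingredients your argument does not supply: (i) the factors $f'\bigl(f^{j}(\gamma_\la(t))\bigr)$ along the finite part of the orbit are bounded above and below for $t$ large, since those points stay away from the poles and from infinity; and (ii) at each step where the orbit approaches a pole $k\pi i$ and then escapes, the blow-up $f'(z)\sim -\la/\bigl(2(z-k\pi i)^{2}\bigr)$ is beaten by the exponential decay of $f'\bigl(f(z)\bigr)$, a cancellation that in the hybrid case must be applied at every bounce between $0$ and $-\infty$. Your calculation is precisely ingredient (ii) specialized to the pole $0$; the mechanism is right, but the statement actually asserted is not proved.

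A secondary point: the caveat you raise at the end is genuine. Since $z_2=f^{2}(\gamma_0(t))$ need only satisfy $\Re z_2\to-\infty$, one can choose admissible asymptotic paths with $-\Re z_2=O(\log|z_2|)$ along which $|z_2|^{2}e^{2\Re z_2}\to\infty$, so the conclusion does depend on controlling how the path enters the tract. The paper's own proof makes the analogous assumption implicitly (it requires $\Re\bigl(\la/(z-k\pi i)\bigr)\gg 0$ along the orbit), so you are not worse off here, but you should state the restriction on $\gamma_0$ explicitly rather than appeal to nonexistence of the limit for bad paths.
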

\begin{proof} For any $\lambda$,  if  $z$ is near $k\pi i$ for some $k\in \mathbb{Z}$, then
 $$f_\lambda(z)= \frac{\lambda}{1-e^{-2z}} \sim \frac{\lambda}{2(z-k\pi i)}, \, \, \, f'_\lambda(z)= \frac{-2\lambda e^{-2z}}{(1-e^{-2z})^2} \sim \frac{-\lambda}{2(z-k\pi i)^2},$$
 and $$f_\lambda'(f_\lambda(z))=\frac{-2\lambda e^{-2f_\lambda(z)}}{(1-e^{-2f_\lambda(z)})^2}.$$ Therefore,
 if $\Re (\lambda/(z-k\pi i))>>0$,   $$f_\lambda'(f_\lambda(z))\sim -2\lambda e^{-2f_\lambda(z)} \sim -2\lambda e^{\frac{-\lambda}{z-k\pi i}}$$ and  if $\Re (\lambda/(z-k\pi i))<<0$, $$f_\lambda'(f_\lambda(z))\sim -2\lambda e^{2f_\lambda(z)} \sim -2\lambda e^{\frac{\lambda}{z-k\pi i}}. $$

In either case \begin{equation}\label{supattracting} f'_\lambda(z)\cdot f'_\lambda(f_\lambda(z))\to 0, \ \text{as}\  \Re( \lambda/(z-k\pi i))\to \pm \infty. \end{equation}

Now suppose $\la \neq 0$ is a prepole of order $n-1$ with asymptotic path $\gamma_0(t)$.  For $j < n-2$,  there exist $k,K$ such that  $$0< k<|(f^j)'(\la)| < K< \infty. $$  Moreover, modifying $k,K$ if necessary, for $t$ sufficiently large,
$$0< k<|(f^j)'(\gamma_0(t))| < K< \infty. $$

If $\lambda$ is part of a regular virtual cycle, then $$\calm(t)=\prod_{i=1}^{n+1}f'(f^i(z)),$$ where the first $n-1$ terms of the product are bounded, and $f^{n+1}(\gamma_0(t))$ satisfies $\Re (\lambda/(f^{n+1}(\gamma_0(t))-k\pi i)>>0$. Therefore  $$\lim_{t\to \infty} \calm(t)=0.$$

If $\lambda$ is part of a hybrid virtual cycle, then $$\calm(t)=\prod_{i=1}^{n+2m+1}f'(f^i(z)),$$ where the first $n-1$ terms of the product are bounded, and the orbit of $\gamma_0(t)$ satisfies $\Re \lambda/(f^{n+1}(\gamma_0(t))-k\pi i)>>0$ and $\Re (\lambda/f^{n+2j+1}(\gamma_0(t))>>0$ for $j=1, \cdots m$. We have $$\lim_{t\to \infty} \calm(t)=0$$ as well. 
\end{proof}

\begin{remark} By virtue of this proposition, although $f_{\la}$ contains no critical values so it cannot have a super-attracting cycle, 
  these virtual cycles behave, in some sense, like {\em super-attracting} cycles. 
    \end{remark}
\begin{remark} In \cite{ABF} and \cite{FK} it is shown that virtual cycles often occur as limits of attracting cycles and that, in a limiting sense, the multipliers of these attracting cycles is $0$.\footnote{These are called {\em non-persistent} in \cite{ABF}.}  Although the virtual cycles do not have attracting basins, there are ``shadows'' of these basins in the following sense.   It follows from the results in \cite{Skor} that there are distinct completely invariant positive measure sets in the Julia set corresponding to each of the asymptotic values.
\end{remark}

 \section{Pseudo-hyperbolic maps}
 \label{p-h maps}
The post-singular set of a hyperbolic meromorphic map is disjoint from its Julia set.   Since one asymptotic value of the functions in $\calfp_2$ is always a pole and so belongs to the Julia set, none of    these functions is hyperbolic.  The following definition defines functions in the family that behave somewhat like hyperbolic maps.
\begin{defn} A map $f_\lambda \in \mathcal{FP}_2$ is called \emph{pseudo-hyperbolic} if it has an attracting periodic cycle $\mathbf z=\{z_0, \ldots z_{n-1} \}$ where $z_i=f_{\la}(z_{i-1})$,  $i=1,\cdots, n-1$, and $f_{\la}(z_{n-1})=z_0$.
\end{defn}
Recall that because $f_{\la}$ has no critical points, this cycle cannot be super-attracting.   Moreover, since $0$ is a pole, it cannot belong to an attracting cycle.

Assume $f_\la$ is  pseudo-hyperbolic and $\{ A_0, \ldots, A_{n-1}\}$ is the immediate basin of the attracting cycle $\mathbf z=\{z_0, \ldots z_{n-1} \}$ with $z_i \in A_i$.  As  the immediate basin must contain a singular value, and $0$ is a pole,   $\la \in A_i$ for some $i$.  Because it is open,  it also contains a neighborhood of $\lambda$.  Since $\la$ has no preimages,  the asymptotic tract of $\lambda$ is also in the immediate basin. By convention, we suppose $A_0$ contains the asymptotic tract of $\lambda$,  so that $\la \in A_1$.

\begin{prop}\label{access}  If $\la$ is pseudo-hyperbolic, the prepoles are all accessible boundary points of the full basin of the attracting cycle $\mathbf z=\{z_0, \ldots z_{n-1} \}$.  In particular,  every component of the basin whose boundary contains a given prepole contains a path that lands there.
 \end{prop}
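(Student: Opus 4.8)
The plan is to show that every prepole lies on the boundary of the full basin and is accessible from within any basin component whose closure contains it. The starting point is the local structure near infinity: recall that $f_\la$ maps both half-planes $L$ and $R$ as universal covers onto punctured neighborhoods of the asymptotic values $0$ and $\la$. Since $\la \in A_1$ and $A_0$ contains the asymptotic tract $R$ of $\la$, the half-plane $R$ is a subset of the basin and in particular every horizontal ray going to $+\infty$ inside $R$ is an asymptotic path landing (in the sphere) at $\infty$ from inside $A_0$. Dually, the asymptotic tract $L$ of $0$ is mapped onto a punctured neighborhood of $0$; because $0 = p_0$ is a pole, a punctured neighborhood of $0$ meets both $L_0$ and $R_0$, and hence meets the basin (it meets $A_0 \cup A_1$ whenever the basin contains a neighborhood of $0$'s images—more to the point, $0$ itself is mapped by $f_\la$ to $\infty = f_\la(0)$, and $\infty$ is approached along $R\subset A_0$). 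So one first records: $\infty$ is accessible from $A_0$ along a path in $R$, and the pole $p_0 = 0$ is accessible from the basin because its image under $f_\la$ is $\infty$ and we can pull the landing path in $R$ back through the branch of $f_\la^{-1}$ defined near $0$ that sends a neighborhood of $\infty$ to a neighborhood of $0$; the pulled-back path lies in whichever basin component contains that neighborhood.

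The second step is the inductive pull-back. Let $p$ be a prepole of order $n$, so $f_\la^{n}(p) = \infty$ and $q = f_\la(p)$ is a prepole of order $n-1$; by induction $q$ is an accessible boundary point of the basin, say via a path $\sigma$ landing at $q$ inside a basin component $B'$. Since $q$ is not an asymptotic value (the only asymptotic values are $0$ and $\la$, and $0$'s forward orbit is $\{0,\infty\}$, handled in the base case; $\la$ is in the interior of the basin, not on its boundary, so no prepole of $\la$ is relevant here), $f_\la$ is a genuine covering over a punctured neighborhood of $q$, so there is a well-defined inverse branch $g$ near $q$ with $g(q) = p$. Lifting $\sigma$ through $g$ produces a path $\til\sigma$ landing at $p$. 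Because $g$ maps $B'\cap U$ into a single component of $f_\la^{-1}(B')$, and $f_\la^{-1}(\text{basin}) = \text{basin}$ by complete invariance, $\til\sigma$ lands at $p$ from inside a basin component. This shows $p$ is accessible.

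The third step is to upgrade "accessible from some component" to "accessible from every component whose boundary contains $p$." Here one uses that near $p$ the map $f_\la^{n}$ is a local homeomorphism (no critical points, and $p$ is not an asymptotic value) onto a neighborhood of $\infty$; so in a small punctured disk $D^*$ around $p$, the components of $D^*\cap(\text{basin})$ correspond bijectively, via $f_\la^n$, to the components of a punctured neighborhood of $\infty$ meeting the basin — and each such component of the neighborhood of $\infty$, lying in $R$ or in the pullbacks organized by the L-R structure, contains an asymptotic ray to $\infty$. Pulling these rays back by the appropriate local inverse of $f_\la^n$ gives, in each basin component abutting $p$, a path landing at $p$. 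One must check that distinct basin components at $p$ really do come from distinct "tract" pieces near $\infty$; this is exactly the content of the L-R structure description (the sets $L_{k_1\cdots}$, $R_{k_1\cdots}$ at each prepole) established in the previous section, so it can be quoted.

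I expect the main obstacle to be the bookkeeping in the third step: making precise that the basin components touching a given prepole $p$ are in one-to-one correspondence under $f_\la^n$ with the basin-meeting components of a punctured neighborhood of $\infty$, and that each of the latter contains a landing ray. The subtlety is that infinity is an essential singularity, so "components of a punctured neighborhood of $\infty$ meeting the basin" must be interpreted through the L-R structure (the half-plane $R\subset A_0$ and its relabeled pullbacks), rather than naively; once the L-R structure is invoked, each relevant piece is a half-plane-like region containing a horizontal asymptotic ray, and the pull-back argument goes through. The base case ($p = 0$, and the behavior at $\infty$) also deserves a careful word because $0$ is simultaneously an asymptotic value and a pole, but Proposition~\ref{0per2} and the explicit estimate~\eqref{basic estimate} give exactly the needed control of the geometry of $L_{00}$ and $R_{00}$ near $-\infty$.
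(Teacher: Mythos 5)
Your proposal is correct and uses the same mechanism as the paper: take a path in the asymptotic tract $R\subset A_0$ landing at $+\infty$ and pull it back through the inverse branches of $f_\la$, each pullback landing at a prepole on the boundary of a basin component. Your extra care with the base case at $0$ (an omitted value, hence never the image of another prepole) and your third step spelling out the ``every component'' clause via the L-R structure are elaborations of details the paper's one-line proof leaves implicit, not a different route.
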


 \begin{proof}  Let $\gamma(t)$ be an unbounded path  from   $z_0$ to $+\infty$  in $A_0$.  It lies in the asymptotic tract of $\la$ and lands at $+\infty$.  The successive  pull backs by $f$ are paths in the $A_j$ landing at preimages on their boundaries.  
 \end{proof}
 
 Recall the following terminology from \cite{DK,DT}.  
 \begin{defn}\label{CantorbouquetN} Let $\Sigma_N$ be the set of sequences  $\{k_0, k_1, \ldots, \}$ of integers  such that $|k_i|< N$. Let $U$ be open and connected and  suppose $f:U \rightarrow \CC$ is holomorphic.  An invariant subset $C_N$ of $J_f \subset U$ is called a {\em Cantor $N$-Bouquet} for $f$ if
 \begin{enumerate}
 \item  There is a homemorphism $\phi: \Sigma_N \times [0,\infty) \rightarrow C_N$.
 \item If $\pi: \Sigma_N \times [0,\infty) \rightarrow \Sigma_N$ is the projection map, set $\sigma(s)=\pi \circ \phi^{-1} \circ f \circ \phi(s,t)$.
 \item $\lim_{t \to \infty} \phi(s,t)=\infty.$
 \item $\lim+{n \to \infty} f^n \circ \phi(s,t)=\infty$ if  $t \neq 0$.
 \end{enumerate}
 \end{defn}
 There is a natural inclusion of an $N$ bouquet into an $N+1$ bouquet by considering only sequences whose entries have absolute value less than $N$.  The invariance of $C_N$ under $f$ requires that $f(\phi(s,0))=\phi(\sigma(s),0)$ so  $h(s,0)$ is an invariant set on which $f$ is topologically conjugate to the shift map.  
 \begin{defn} Let $C_N$ be a Cantor $N$-bouquet for $f$ and suppose $C_N \subset C_{N+1} \subset \ldots $ is an increasing sequence of bouquets with the natural inclusion maps.  The set 
 $$ C = \overline{ \cup_{N \geq 0} C_N }$$ is called a {\em Cantor Bouquet}.   
 \end{defn}
 
 The existence of the paths in the above proposition allow us to prove the first part of theorem A.

 \begin{thm}\label{CBouq}    If $\la$ is pseudo-hyperbolic, the Julia set always contains Cantor bouquets of curves that meet at the prepoles.
 \end{thm}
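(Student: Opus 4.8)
The plan is to construct the Cantor bouquets explicitly by building, for each sufficiently large $N$, the semiconjugacy $\phi\colon \Sigma_N \times [0,\infty)\to C_N$ promised by Definition~\ref{CantorbouquetN}, and then take the closure of the increasing union. The natural place to anchor the construction is the asymptotic tract $R=R_M$ of $\la$ together with its preimages $R_{k_1\cdots k_n}$ coming from the L-R structure. Since $f_\la$ restricted to $R$ is (after the logarithmic change of coordinates $z\mapsto \Log$ sending $R$ to a half-plane) essentially the exponential map, $f_\la$ is \emph{expanding} in a hyperbolic sense on the union of the preimages of $R$ that stay far to the right, exactly as in the classical Devaney--Krych picture for $\exp(z)+a$. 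The key point, supplied by Proposition~\ref{access}, is that every prepole is an accessible boundary point reached by a path in the basin; dually, the preimages $R_{k_1 0 k_2 0 \cdots}$ of $R$ limit on the prepoles, so the hairs we build will land at infinity and at each prepole.

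The steps, in order, would be: \textbf{(1)} Fix $M$ large and work in the left/right half-plane coordinates. Using estimate~\eqref{basic estimate} (and its analogue on $R$), show that on $R_M$ the map $f_\la$ is strongly expanding in the spherical or a suitable hyperbolic metric once $M$ is large enough; more precisely, the inverse branches of $f_\la$ that map into a fixed right half-plane and land in the various $R_k$'s are uniform contractions. \textbf{(2)} For a sequence $\underline{s}=(k_0,k_1,\dots)\in\Sigma_N$, form the nested sequence of inverse-branch images indexed along $\underline s$; the contraction from (1) shows the intersection is a single point or a single arc $\phi(\underline s,\cdot)$ parametrized so that $\phi(\underline s,t)\to\infty$ as $t\to\infty$. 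This is the standard ``itinerary $\Rightarrow$ hair'' argument; one must check the arcs vary continuously in $\underline s$ (so that $\phi$ is a homeomorphism onto its image $C_N$) and that $f_\la\circ\phi(\underline s,t)=\phi(\sigma\underline s,t)$ where $\sigma$ is the shift, which is automatic from the indexing of the inverse branches. \textbf{(3)} Verify the escaping condition: for $t\neq 0$, $f_\la^n(\phi(\underline s,t))\to\infty$. This follows because points with $t\ne 0$ sit a definite hyperbolic distance into the expanding region, so their orbits are pushed off to infinity; points with $t=0$ are the landing points — infinity or the prepole with address $k_0k_1\cdots$ — and $f_\la$ acts on $\{\phi(\underline s,0)\}$ as the shift. \textbf{(4)} Observe $C_N\subset C_{N+1}$ via the natural inclusion $\Sigma_N\hookrightarrow\Sigma_{N+1}$, set $C=\overline{\bigcup_N C_N}$, and note that distinct hairs share only their landing prepoles, so $C$ is a union of Cantor bouquets meeting precisely at the prepoles (and at infinity). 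Finally, invoke the analogue at $0$ and at each prepole: since $0$ is also an asymptotic value with tract $L$, and by Proposition~\ref{0per2} the $L_{00}$-region produces asymptotic paths returning to $L$, the same construction with $L$ and its preimages $L_{k_1 0 k_2 0\cdots}$ yields bouquets landing at the poles and prepoles.

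The main obstacle I expect is \textbf{Step (1)–(2)}: establishing the \emph{uniform} expansion/contraction needed to make the nested-intersection argument work in the presence of the polar asymptotic value. Unlike the pure exponential case, here the relevant inverse branches go $L\to L_0\to L_{00}$ and back, i.e. they bounce between a neighborhood of $\infty$ and a neighborhood of the pole $0$; one must check that the composite inverse branch is still a contraction and that the ``bouncing'' does not destroy the expansion estimate. The estimates~\eqref{basic estimate} and~\eqref{supattracting} are exactly what is needed — they say the two-step derivative near the zero cycle tends to $0$, i.e. the two-step inverse branch is a strong contraction — so the argument should go through, but verifying the uniformity over all admissible addresses (and the continuity of $\phi$ up to the boundary $t=0$, where the hairs must land at the correct prepoles given by Proposition~\ref{access}) is the delicate technical heart. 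The remaining parts of Theorem A (complete invariance and connectedness of the stable set when there is an attracting fixed point, and that the bouquets form its full complement) are deferred; Theorem~\ref{CBouq} only asserts the existence of the bouquets at the prepoles, which is what this proof delivers.
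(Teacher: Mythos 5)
Your overall strategy --- build $C_N$ as nested intersections of inverse-branch images along itineraries, verify the semiconjugacy to the shift, and take the closure of the increasing union --- is exactly the Devaney--Keen machinery that the paper's (very short) proof invokes by citing \cite{DK}, so in spirit you are following the intended route. However, there is a genuine error in where you anchor the construction. You build the hairs from the asymptotic tract $R=R_M$ of $\la$ and its preimages $R_{k_1\cdots k_n}$. For a pseudo-hyperbolic map, $R$ is contained in the immediate basin $A_0$ of the attracting cycle, and hence every preimage $R_{k_1\cdots k_n}$ lies in the full basin of attraction. Consequently every point of your nested intersection lies in the Fatou set, and its orbit converges to the attracting cycle rather than to $\infty$; both the requirement $C_N\subset J_f$ and condition (4) of Definition~\ref{CantorbouquetN} fail. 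The paper makes this point explicitly right after Corollary~\ref{Cantor Bouquet n=1}: the bouquets cannot intersect any of the $R$-sets of the L-R structure.

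The correct anchor --- which you gesture at only in your final sentence and in your ``main obstacle'' paragraph --- is the tract $L$ of the polar asymptotic value $0$ together with the pole itself. The analogue of the escaping dynamics of $E_a$ here is the two-step bounce $L\to(\text{near }0)\to(\text{near }\pm\infty)$ accumulating on the zero virtual cycle, and the hairs are nested intersections of the sets $L_{k_1 0 k_2 0\cdots}$; the scaffolding separating the itinerary classes is provided by the accessibility paths of Proposition~\ref{access}, which is precisely how the paper uses that proposition. Note also that your reading of estimate~\eqref{supattracting} is reversed: that estimate says the \emph{forward} two-step derivative tends to $0$ in the regime pole $\to$ asymptotic tract $\to$ asymptotic value, which is the Fatou-side (virtual-multiplier) statement. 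The expansion you actually need is the one coming from~\eqref{basic estimate}: for $\Re z\ll 0$ one has $f^2(z)\sim -e^{-2z}/2$, so the two-step return map $L\to L$ is strongly expanding and its inverse branches are the uniform contractions required in your steps (1)--(2). With the construction re-anchored on the $L$-sets and that derivative estimate, your outline does reproduce the argument the paper intends.
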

 \begin{proof} The arguments in \cite{DK} apply directly to show that these pullback paths can be used to  form Cantor $N$=bouquets for $f$.   Then, using the natural inclusion maps, we obtain Cantor bouquets. 
 \end{proof}

Before we talk  more about  pseudo-hyperbolic maps in general, we first consider the special case $n=1$.
\begin{prop}\label{compl invariance}  If $n=1$,  $A_0$ is  completely invariant. That is, $A_0$ is the only Fatou component.
\end{prop}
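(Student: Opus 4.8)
The plan is to show that when $n=1$ there is a single Fatou component $A_0$ containing the asymptotic tract $R$ of $\lambda$, and that $f^{-1}(A_0)\subseteq A_0$, which forces $A_0$ to exhaust the Fatou set. First I would record the setup: since $f_\lambda$ is pseudo-hyperbolic with an attracting fixed point $z_0$, the immediate basin $A_0$ is the single periodic Fatou component, it contains $\lambda$, and since $\lambda$ has no preimages, $A_0$ contains the asymptotic tract $R=R_M$ of $\lambda$ as well. Because the only singular values are $0$ and $\lambda$, and $0$ is a pole (hence in $J_f$), the only singular value in the Fatou set is $\lambda$, and it lies in $A_0$.

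The key step is to verify that every inverse branch of $f_\lambda$ maps $A_0$ into itself. Take any point $w\in A_0$ and any point $z$ with $f_\lambda(z)=w$; I want $z\in A_0$. Connect $w$ to $\lambda$ by a path $\sigma$ inside $A_0$ (possible since $A_0$ is open and connected); by enlarging $M$ if necessary we may push the terminal portion of $\sigma$ into the asymptotic tract so that it becomes an asymptotic path for $\lambda$. Now lift $\sigma$ under $f_\lambda$ starting from $z$. Since $\lambda$ is an asymptotic value with no finite preimages, the lift of the terminal asymptotic portion of $\sigma$ runs off to $\infty$ inside the asymptotic tract $R$, which is contained in $A_0$; in particular the lifted path eventually lies in $A_0$. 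The lifted path is connected and avoids the singular set preimages appropriately — more precisely, $f_\lambda$ is an unbranched covering $\CC\to\hat\CC\setminus\{0,\lambda\}$, so the lift of any path avoiding $0$ and $\lambda$ is well defined and continuous — hence the whole lifted path, which starts at $z$ and ends up in $A_0$, lies in a single Fatou component, namely $A_0$. Therefore $z\in A_0$, proving $f_\lambda^{-1}(A_0)\subseteq A_0$. Combined with the obvious $f_\lambda(A_0)\subseteq A_0$, this says $A_0$ is completely invariant.

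Finally, complete invariance of $A_0$ forces it to be the entire Fatou set: if there were another Fatou component $D$, then since $f_\lambda$ has no wandering domains (as recorded earlier, from \cite{DK,BKL4}) and no Baker domains, $D$ would be eventually periodic, landing after finitely many iterates on the unique periodic cycle of domains, which is $\{A_0\}$; thus $f_\lambda^k(D)\subseteq A_0$ for some $k\geq 1$, and then $D\subseteq f_\lambda^{-k}(A_0)\subseteq A_0$ by complete invariance, so $D=A_0$. Hence $A_0$ is the only Fatou component.

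The main obstacle I expect is the lifting argument in the middle paragraph: one must be careful that the path $\sigma$ in $A_0$ can be chosen to avoid the other asymptotic value $0$ and all poles (so the lift is defined globally), and that the tail of $\sigma$ genuinely becomes an asymptotic path landing the lift at $\infty$ inside $R\subseteq A_0$ rather than at some finite preimage. The cleanest way around this is to use the description of $R$ as a half-plane $R_M$ with $f_\lambda:R_M\to$ (punctured neighborhood of $\lambda$) a universal covering: choose $\sigma$ to first go from $w$ to a point $w'$ deep in that punctured neighborhood while staying in $A_0$, then spiral toward $\lambda$; the lift of the spiral tail is forced into $R_M$, and $R_M\subseteq A_0$ closes the loop.
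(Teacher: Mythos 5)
Your proof is correct and uses essentially the same mechanism as the paper's: because $\lambda$ is an omitted asymptotic value whose tract $R\subseteq A_0$, any preimage of a point of $A_0$ is connected back to $A_0$ (the paper pulls back a simply connected neighborhood of a path from $\lambda$ to $z_0$, you lift a path to $\lambda$ itself --- the same covering-space argument). Your final paragraph, ruling out other components via the absence of wandering and Baker domains and the uniqueness of the periodic cycle, is a point the paper leaves implicit, and is a welcome addition.
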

\begin{proof}
Because $n=1$ both the fixed point $z_0$ and $\lambda$ are in $A_0$. Let $U\subset A_0$ be a simply connected open set containing the fixed point $z_0$ and $\lambda$ (we can find this set by thickening a path from $\la$ to $z_0$). Then for every branch of the inverse,  $f^{-1}_\lambda(U)\subset A_0$ is a simply connected unbounded set containing all the preimages of $z_0$. Therefore $f^{-1}_\lambda(A_0)=A_0$;  that is,  $A_0$ is completely invariant.
\end{proof}

Thus,  applying theorem~\ref{CBouq}   we obtain the second half of theorem A.
\begin{cor}\label{Cantor Bouquet n=1} If $n=1$, the Fatou set is connected and the Julia set contains  Cantor bouquets at infinity and at each of the poles and prepoles.
\end{cor}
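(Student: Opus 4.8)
The final statement is Corollary~\ref{Cantor Bouquet n=1}, which combines Proposition~\ref{compl invariance} (that $A_0$ is completely invariant when $n=1$) with Theorem~\ref{CBouq} (Cantor bouquets exist at prepoles for pseudo-hyperbolic maps). The plan is to deduce the two conclusions separately. For connectedness of the Fatou set: since $A_0$ is completely invariant, there can be no other Fatou component, because any Fatou component maps onto a Fatou component under $f$, and any component of $f^{-1}(A_0)$ lies in $A_0$; hence $F_f = A_0$. It remains to note $A_0$ is connected, which holds by definition of an immediate basin of a fixed point (it is the connected component of the basin containing $z_0$). So the Fatou set equals the single connected set $A_0$.

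For the bouquet statement, I would invoke Theorem~\ref{CBouq} directly: since $f_\lambda$ is pseudo-hyperbolic, the Julia set contains Cantor bouquets of curves meeting at the prepoles. The only thing to add beyond the cited theorem is the bouquet ``at infinity'': here I would point out that when $n=1$, the complement of $A_0$ is precisely $J_f$, and since $A_0$ is completely invariant and its complement is nonempty (it contains the poles $p_k = k\pi i$, which are never in an attracting cycle), the structure of $J_f$ as a closed totally-disconnected-cross-section family of curves accumulating at $\infty$ follows from the same pullback construction of Proposition~\ref{access}: the unbounded path $\gamma(t)$ from $z_0$ to $+\infty$ lies in $A_0$, its iterated preimages land at all prepoles, and the ``hairs'' in the complement limit on $\infty$. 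The completely invariant $A_0$ forces these hairs to fill out the entire complement, giving the bouquet at infinity as well as at each pole and prepole.

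The key step — and the main obstacle — is arguing that the Cantor bouquets constitute the \emph{full} complement of $A_0$, i.e., that $J_f$ is exactly the closure of the union of these bouquets and contains nothing else. This is where complete invariance of $A_0$ is essential: it guarantees $J_f = \CC \setminus A_0$ has empty interior complement-wise and that every point of $J_f$ is approximated by prepoles (true for meromorphic $f$ in general) while also being reachable along a hair, which requires the Cantor-$N$-bouquet exhaustion of \cite{DK} to saturate. I would cite \cite{DK} for the fact that, in the presence of a completely invariant attracting basin whose only singular value besides the pole is attracted to the fixed point, the Julia set is precisely a Cantor bouquet; the argument is the one used for $E_a(z)$ with $a$ attracted to a fixed point, and the addressing by prepole indices set up in Section~\ref{family} provides the sequence space $\Sigma_N$.

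\begin{proof}
By Proposition~\ref{compl invariance}, $A_0$ is completely invariant, so $f^{-1}_\lambda(A_0)=A_0$ and $f_\lambda(A_0)\subseteq A_0$. Any Fatou component $D$ satisfies $f_\lambda(D)\subseteq A_0$ after finitely many iterations (since $A_0$ is the unique immediate basin and absorbs $\lambda$), hence $D\subseteq f^{-n}_\lambda(A_0)=A_0$ for some $n$; thus $F_f=A_0$. Since $A_0$, being the immediate basin of the fixed point $z_0$, is by definition a connected set, the Fatou set is connected and $J_f=\CC\setminus A_0$, a set that is nonempty because it contains the poles $p_k=k\pi i$.

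Since $f_\lambda$ is pseudo-hyperbolic, Theorem~\ref{CBouq} produces Cantor bouquets of curves in $J_f$ meeting at the prepoles, built from the iterated pull-backs of the unbounded path $\gamma(t)$ of Proposition~\ref{access}. The endpoint of $\gamma(t)$ is $+\infty$, and the inverse branches that fix the direction toward $L$ produce curves accumulating at $\infty$ as well, so the construction of \cite{DK} yields a Cantor bouquet at infinity in addition to one at each pole and prepole. Because $A_0$ is completely invariant, its complement $J_f$ has no further Fatou structure to interrupt these hairs: the Cantor $N$-bouquets $C_N$ exhaust $J_f$ as $N\to\infty$ exactly as in the case of $E_a(z)$ with $a$ in the basin of an attracting fixed point, the sequence space $\Sigma_N$ being indexed by the prepole addresses of Section~\ref{family}. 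Hence $J_f=\overline{\bigcup_{N\ge 0}C_N}$ is a Cantor bouquet at infinity and at each pole and prepole.
\end{proof}
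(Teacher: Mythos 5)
Your proposal is correct and follows essentially the same route as the paper: the corollary is obtained by combining Proposition~\ref{compl invariance} (complete invariance of $A_0$, hence a connected Fatou set) with Theorem~\ref{CBouq} and the pullback construction of \cite{DK}, which is exactly what the paper does. Your extra discussion of the bouquets filling the full complement goes slightly beyond what the corollary as stated requires (that claim belongs to Theorem A), but it is consistent with the paper's remark following the corollary.
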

Note that because there is a right half-plane $R$ in $A_0$, the Cantor bouquet at infinity intersects a left half-plane.   This bouquet at infinity pulls back to Cantor bouquets at all the prepoles.  The preimages of $R$ are also in $A_0$ so the bouquets cannot intersect any of the R-sets in the L-R structure.
This is illustrated in Figure~\ref{dynpics1}.  
\medskip

Now suppose $n\geq 2$.  In the notation above,  $\lambda\in A_1$.  Then because our functions have only two asymptotic values, and one is never attracted to an attracting cycle, the proof, using  local linearization around the cycle, in   proposition 3.1 in \cite{FK} applies to show that 
 if $n>1$ all the components in the full basin of attraction are simply connected.

Moreover, the proof of proposition 3.2 in \cite{FK} also applies to show that $f: A_0 \rightarrow A_1 \setminus \{\la \}$ is a universal covering map, $A_0$ is unbounded and contains a right half-plane $R=\{z\ |\  \Re z>M\}$ for some $M>0$; all other maps $f: A_k \rightarrow A_{k+1}$ are one to one. Therefore each $A_k$, $k> 0$ is contained in a strip the  left half-plane with  height no more than $\pi$; that is, $A_k\subset  \{z\ |\ \Re z <0 \}$, and for  $z_1, z_2\in A_k$, $|\Im z_1-\Im z_2|<\pi$.  In particular, for those $j$ for which $A_j$ is  unbounded,   it is unbounded only to  the left; that is, there exists a sequence $z_m\in A_j$ such that $\Re z_m\to -\infty$ as $m\to \infty$.  Since such an $A_j$ is a pullback of $A_0$, going backwards around the cycle, it contains the pullback $R_{k_j \ldots k_{n-1}0}$ of $R$.

\subsection{Kneading sequences}\label{kneading}
In \cite{CJK2,FK,KK} it was shown that there is a fairly simple relationship between the inverse branches going around  the  cycles that attract the  ``free asymptotic value'' and the addresses of the  prepoles.
Because in $\calfp_2$, $0$ is not only a pole but also an asymptotic value of each function, this relationship is more complicated.

We  use the addresses of the prepoles and the  L-R structure for a pseudo-hyperbolic map to define a coding for the map that we call its kneading sequence.  Note that the definition of  kneading sequences here is not the same as the ones in \cite{D1,DFJ} although they are used  to prove similar kinds of results.

Assume $n>1$.  Suppose $f$ is pseudo-hyperbolic and let $A_n=A_0, \ldots, A_{n-1}$ be the components of the immediate basin of a periodic cycle $(z_0, \ldots, z_{n-1})$ labeled so that $\la \in A_1$.   Since $A_1$ is a component of $f^{-(n-1)}(A_0)$ and $R \subset A_0$ there is a unique set  in the L-R structure, $R_{k_1 \ldots k_n}$, whose index has length $n$  and is contained in $A_1$.

\begin{defn}\label{knseqnew} 
      We 
   define the {\em kneading sequence of $f_{\la}$} by the index of the set $R_{k_1 \ldots k_n}$:   $S=S(\la)=*k_1 \ldots k_{n-1}$. Note that if $n=1$, the sequence reduces to  $S=*$ since  $R$ has no index.
       \end{defn}
\begin{remark}\label{lainR}  We may assume without loss of generality that $z_0 \in R$ so that $z_1 \in R_{k_1 \ldots k_n}$ and that for all large $j$, $f^{jn}(\la) \in R_{k_1 \ldots k_n}$.  \end{remark}     

\begin{remark}  By the definition of the L-R structure, going forward, $f: R_{k_1 \ldots k_n} \rightarrow  R_{k_2 \ldots k_n}  $ and so on; that is $f$ acts as a shift map on the kneading sequence.    
\end{remark}

  We can relate specific prepoles on the boundaries of the $A_i$ to the kneading sequence.       
Since  $f$ is a homeomorphism, for each $A_i$, $i=2, \ldots n$,  there is a well defined inverse map  $g_i:A_i \rightarrow A_{i-1}$.

Because the prepoles in the boundaries of the Fatou components are accessible, $f$ extends continuously to them.    Therefore we can use the boundary point $+\infty$  of $A_0$ and an asymptotic path in $R$  to 
  define unique boundary points  $q_i$ on each $\partial A_i$, $i=0, n-1$ by going backwards around the cycle of domains as follows: set $q_0=q_n=+\infty$,  $q_{i-1}=g_i(q_i)$, $i=n-1, \ldots, 2$. Note that each $q_i$ is either a prepole, $0$ or $-\infty$.    Recall that each set at level $n$ of the L-R structure has a unique boundary point that is an $n^{th}$ preimage of $-\infty$ or $+\infty$.   Thus the points $q_i$ are boundary points of the sets $R_{k_i \ldots k_n}   \cap A_i$. 

  \begin{defn}\label{poleseq}
  The sequences $q_i$ fall into three categories.  Recall that $q_{n-1}$ is always a pole.
  \begin{itemize}
  \item {\rm case 1:}   $q_{n-1}=0$, and since $0$ is omitted,   $q_{n-2}=-\infty$.  Also,  for all $j=0, \ldots, (n-3)/2$,  $q_{n-{2j-1}}=0$ and $q_{n-2j}=-\infty$.  The sequence of $q_i$'s, the kneading sequence and the corresponding attractive cycle are all  called {\em unipolar}.  Note that unipolar sequences occur only for odd values of $n$.   
    \item {\rm case 2:}  $q_{n-1}$ is a non-zero pole. Its preimages  $q_i$ are all prepoles whose order increases as $i$ decreases.   This sequence of $q_i$'s, and the corresponding  kneading sequence and  attractive cycle are called {\em regular}. 
    \item {\rm case 3:}  $q_{n-1}=0$, and since $0$ is omitted,   $q_{n-2}=-\infty$.   In this case,  for $j=0, \ldots, K$, $K < (n-1)/2$,   $q_{n-{2j+1}}=0$ and $q_{n-2j}=-\infty$,  but $q_{n-{2K-3}}$ is a non-zero pole.  The remaining $q_i$ are prepoles whose order increases as $i$ decreases.    This sequence of $q_i$, and the corresponding  kneading sequence and  attractive cycle are called  {\em hybrid}.    
 \end{itemize}
 \end{defn}

   The next several lemmas contain the technical results we need to prove theorem B. In all of them $f$ is assumed pseudo-hyperbolic and $n>1$.

Figures~\ref{dynpics1} and ~\ref{dynpics2} illustrate the different cases in the lemmas.

In the next lemma we assume $\partial A_0$  contains a pole.  Note that if it contains one pole, by periodicity it contains them all.

\begin{lemma}\label{separating}
If $0\in \partial A_0$ and the period $n$ of the attracting cycle is  greater than $1$,  then the component $A_1$ is unbounded,  $n$ is odd and the kneading sequence is $*k_10\cdots 0k_{n-2 }0$, where for some odd $j$,  $k_{n_j} \neq 0$.

\end{lemma}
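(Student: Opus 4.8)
The plan is to exploit the double role of $0$ as both a (simple) pole and the asymptotic value whose tract is the left half-plane $L$, together with the structural facts already recorded: $f\colon A_0\to A_1\setminus\{\lambda\}$ is a universal cover with deck group generated by $z\mapsto z+\pi i$, each $A_k$ with $k>0$ lies in a half-strip of height $<\pi$ contained in $\{\Re z<0\}$, and $f\colon A_k\to A_{k+1}$ is one-to-one for every $k\ge 1$. First I would dispose of unboundedness of $A_1$. Since $0\in\partial A_0$ and $A_0$ is open, pick $w_m\in A_0$ with $w_m\to 0$ and $w_m\notin f^{-1}(\lambda)$; because $0$ is a pole, $f(w_m)\to\infty$, and since $f(A_0)=A_1\setminus\{\lambda\}$ we have $f(w_m)\in A_1$, so $A_1$ is unbounded, and (being in a height-$<\pi$ strip in $\{\Re z<0\}$) it is unbounded only to the left. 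Conversely, if $A_1$ were bounded, then $f^{-1}$ of a neighborhood of $\infty$ — which is a union of neighborhoods of the poles — would miss $A_0$, forcing $0\notin\partial A_0$; so this is in fact an equivalence, which is used below.

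Next I would prove the parity statement by propagating the geometry around the cycle. The far-left end of $A_1$ lies in $L$, where $f(z)\sim -\lambda e^{2z}\to 0$ by the estimate~\eqref{basic estimate}; hence $A_2=f(A_1)$ accumulates at $0$, i.e. $0\in\partial A_2$, while the rest of $A_1$ sits in a half-strip in $\{\Re z<0\}$ whose $f$-image is bounded, so $A_2$ is a \emph{bounded} component. Then $A_2$ accumulates at the \emph{pole} $0$, where $f(z)\sim\lambda/(2z)\to\infty$, so $A_3=f(A_2)$ is unbounded, and again lying in a strip in $\{\Re z<0\}$ it reaches $\Re z\to-\infty$. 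Iterating, $A_{2k+1}$ is unbounded and, for $k\ge 1$, $A_{2k}$ is bounded with $0\in\partial A_{2k}$. But $A_n=A_0$ contains the right half-plane $R$, hence is unbounded, so $n$ cannot be even: $n$ is odd.

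Now I would read off the kneading sequence through the marked boundary points $q_0=q_n=+\infty$, $q_{i-1}=g_i(q_i)$ of Definition~\ref{poleseq}. Since $A_{n-1}$ is bounded with the pole $0$ on its boundary, and $A_0$ (being simply connected, $\pi i$-periodic and containing $R$) is unbounded only toward $+\infty$, the points of $A_{n-1}$ near $0$ are exactly those whose $f$-images escape to the $+\infty$-end of $A_0$; thus $q_{n-1}=0$. Then $q_{n-2}=-\infty$, since $0$ is omitted and its only boundary preimage is the ideal point $-\infty\in\partial L$; then $q_{n-3}=0$ again (approached along the direction in which $f$ near the pole $0$ runs to $-\infty$), and so on alternately, $q_{2k}=0$, $q_{2k+1}=-\infty$, with $q_1=-\infty$. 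This is case~1 of Definition~\ref{poleseq}. Matching the $q_i$ to the indices of the L-R sets $R_{k_i\cdots k_n}\subset A_i$ as in Definition~\ref{knseqnew} — with a ``bounce at $0$'' recorded by the trailing-zero convention of the L-R structure — every even-position digit is forced to vanish, so the sequence has the form $*k_1 0\cdots 0k_{n-2}0$. For the non-triviality clause: if $k_1=k_3=\cdots=k_{n-2}=0$ then the index of $R_{k_1\cdots k_n}\subset A_1$ is $0\cdots 0$, so $R_{k_1\cdots k_n}=R_{0\cdots 0}$; but for a genuine period-$n$ cycle the shift action forces $R_{k_1\cdots k_n}$ to coincide with $R$ (Section~\ref{family}), whereas $R_{0\cdots 0}$ is the nested pullback of $R_{00}$ and lies in a left half-plane, disjoint from $R$ — a contradiction. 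Hence $k_j\neq 0$ for some odd $j$.

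The delicate part is the second step together with the bookkeeping in the third: one must show the alternation is genuinely \emph{forced}, in particular that each $A_{2k}$ is bounded (controlling the $f$-image of the relevant half-strip and ruling out that the boundary of the unbounded $A_{2k-1}$ carries a pole that would make the image unbounded), and that the geometric data $q_i\in\{0,-\infty\}$ corresponds to zero digits of the L-R addresses precisely in the asserted positions. This is exactly where the interplay between the pole $0$ and the asymptotic tract $L$ — the defining feature of $\calfp_2$ — must be used with care; the earlier analysis of the L-R structure (the sets $L_{0k_20\cdots k_n0}$, $R_{k_10k_20\cdots k_n0}$ and their coincidences in the presence of an attracting cycle) is what makes this match-up possible.
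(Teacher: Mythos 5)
Your overall strategy --- exploit the double role of $0$ as pole and as the asymptotic value with tract $L$, propagate an alternation around the cycle to get the parity and the zero pattern, then rule out the all-zero pattern --- is the same as the paper's, and your first step (unboundedness of $A_1$ to the left) matches the paper's. But there are two genuine gaps. First, your parity argument rests on the claim that $A_{2k}$ is \emph{bounded} for $k\ge 1$, which you defer to the final paragraph as ``the delicate part'' without proving; and without it the argument fails, because the weaker conclusion ``$0\in\partial A_{2k}$'' is perfectly consistent with $A_{2k}=A_0$ (indeed $0\in\partial A_0$ is the hypothesis), so an even $n$ produces no contradiction. Boundedness of $A_{2k}$ would require ruling out a pole on $\partial A_{2k-1}$ other than the ideal point $-\infty$, which is not automatic. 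The paper avoids this entirely: it tracks the endpoints of a distinguished path $\gamma_j=f(\gamma_{j-1})$ starting from a path $\gamma_0\subset A_0$ joining $+\infty$ to $0$, observes that the free endpoint alternates $0,-\infty,0,-\infty,\dots$, and excludes the endpoint $+\infty$ for $0<j<n$ because a path in $A_j$ tending to $+\infty$ through $R$ would force $A_j=A_0$; the cycle can only close up, with endpoint $+\infty$, at an odd step. (Even the paper supplements the even case with the separately proved Lemma~\ref{evenper}.)

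Second, your proof of the non-triviality clause does not work as written: the assertion that ``for a genuine period-$n$ cycle the shift action forces $R_{k_1\cdots k_n}$ to coincide with $R$'' is false --- $R_{k_1\cdots k_n}$ is an $n$-fold preimage of $R$ and is mapped \emph{onto} $R$ by $f^n$, but never equals $R$, so there is no contradiction with $R_{0\cdots 0}$ lying in a left half-plane. The paper's argument here is genuinely different and is the substantive content of the lemma's last part: pulling $\gamma_0$ back once to $\beta_{n-1}\subset A_{n-1}$ and using the relative vertical order of the nested regions $L_{0\cdots 0}$ and $R_{0\cdots 0}$ (Remark~\ref{RLorder}), one shows that if every $k_{n-2j}$ were $0$ the curve $\gamma_0\cup\beta_{n-1}$ would separate two regions that must both lie in $A_1$, forcing $A_1=A_{n-1}$ and $n=2$, a contradiction. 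You would need to supply an argument of this kind (or an equivalent one) to close this step.
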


\begin{proof}
Let $V$ be a small neighborhood of $0$ and let $U_0=V \cap  A_0$.  Then $0$ is a boundary point of $U_0$ and since $f(U_0) \cap A_0 =\emptyset$, $U_0 \cap R_0 = \emptyset$ and $U_0 \cap L_0 \neq \emptyset$.

 By  proposition~\ref{access}, there are paths  in $A_0$ that land on $0$ and $+\infty$ respectively.  Since $A_0$ is connected they can be joined to form a path $\gamma_0 \subset A_0$ that goes from  $+\infty $ through R  and  $L_0 \cap A_0$ to $0$.  
  Because  the pole $0\in \partial A_0$,  going forward around the cycle, and setting $\gamma_i=f(\gamma_{i-1})$, $i=1, \ldots, n$,  $$\lim_{t\to \infty} \gamma_1(t)=\lim_{t\to \infty} f(\gamma_0(t))\to \infty.$$   If $\lim_{t\to \infty}\Re \gamma_1(t)=+\infty$,  for $t$ close to $\infty$, $\gamma_1 \cap A_0 \neq \emptyset$ and $A_1=A_0$, which can't happen since $n>1$.  Therefore  $\lim_{t\to 0} \Re \gamma_1(t)= \la,$ and $\lim_{t\to \infty}\Re \gamma_1(t)=-\infty$ so that $A_1$ is unbounded to the left and intersects  an asymptotic tract for $0$;  that is,  $\gamma_1(t)$ goes from $\la$  to $-\infty$.

  Applying $f$ to $A_1$,  $-\infty \in \partial A_1$ implies $0 \in \partial A_2$.    Continuing forward  around the cycle we see that for $j$ odd, $A_j$ is unbounded and for $j$ even, $0 \in \partial A_j$.  The endpoints of the paths $\gamma_j(t)=f(\gamma_{j-1}(t))$ thus alternate between $0$ and $-\infty$ until   $j=n$ and $\lim_{t \to \infty}\gamma_{n}(t)=+\infty$;  thus $n$ must be odd.

  Now go backward around the cycle and set $\beta_{n-1}=g_{n-1}(\gamma_0)$ and $\beta_{n-j-1}=g_{n-j}(\beta_{n-j})$, $j=2, \ldots, n-1$.  
   Denote the successive preimages of $\gamma_0$ by $\beta_{n-j}$ so that the domain $A_{n-j}$ contains the curve $\beta_{n-j}$.  Note that each of these curves passes through a periodic point and joins a prepole to either $0$ or $\infty$ depending on the parity of $j$.  The domains $A_{n-j}$ are unbounded on the left and are disjoint so their intersections with a vertical line determines an ordering of  the imaginary parts of $\beta_{n-j}$.

\begin{figure}
\begin{center}

\includegraphics[width=4.5in]{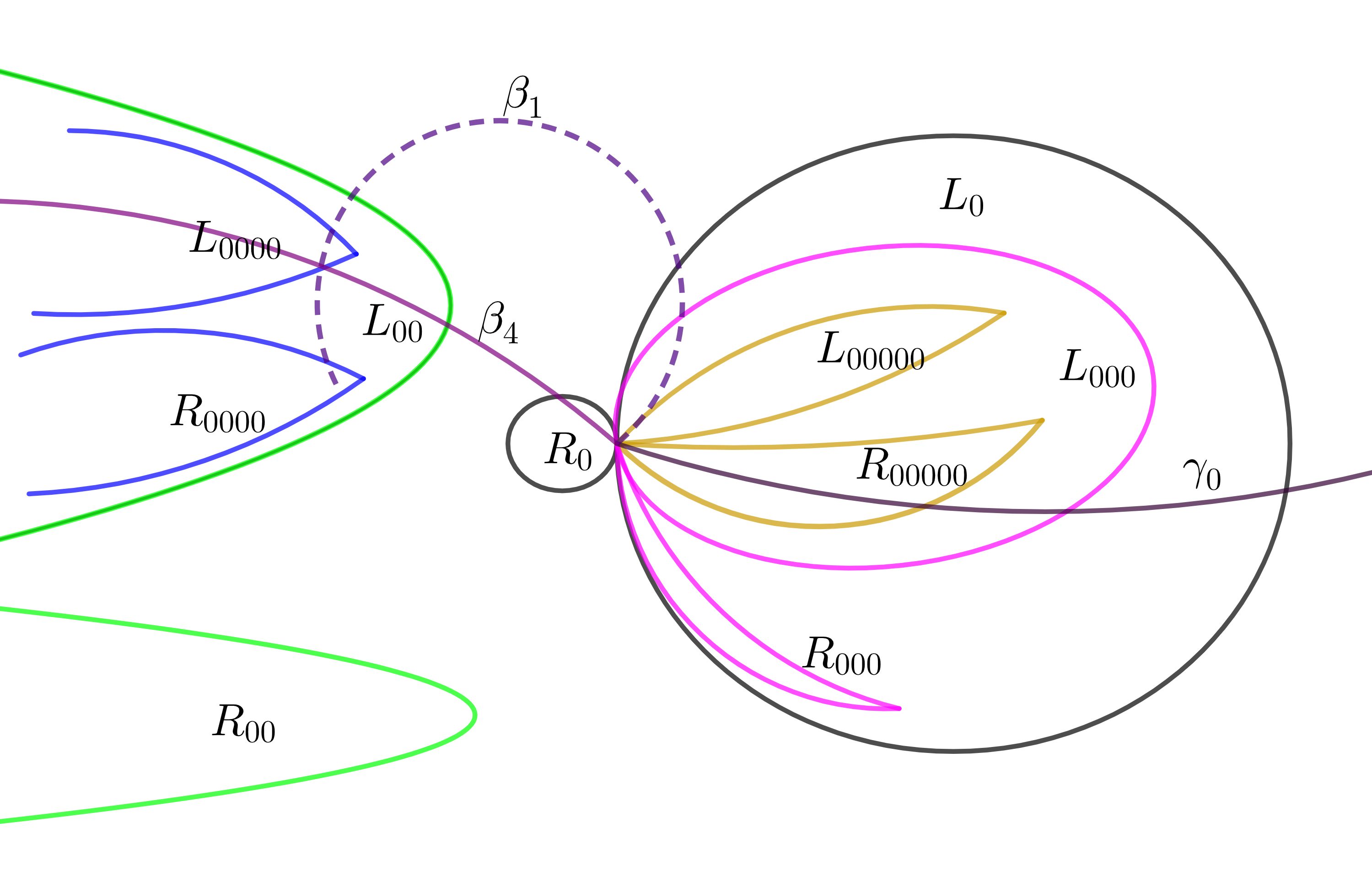}
\caption{ A period $5$ example that shows the kneading sequence cannot be $*0000$.   }

\label{NoZeros}
\end{center}   
\end{figure}

  We next claim $k_{n-2j} \neq 0$ for some $j=1, \ldots (n-1)/2$.   By the definition of the kneading sequence, the part of $\gamma_0$ in $L_0$ approaches $0$ through $R_{k_10\cdots 0k_{n-2 }0}$. Pulling back,   $\beta_{n-1}$ in $A_{n-1}$   goes from $L_{k_20 \ldots k_{n-2}0} \subset L_{00}$ to $R_0$.  By remark~\ref{RLorder}, the regions  $L_{00}$ and  $R_{00}$ intersect a vertical line in order depending on the argument of $\la$.  Pulling back the vertical direction to $0$, the relative order of the regions $R_{00 \ldots 0}$ and $L_{00 \ldots 0}$,  where the number of zeros is between $1$ and $n-1$, is unchanged.  Suppose for argument's sake that $L_{00}$ lies above $R_{00}$ as in figure~\ref{NoZeros}.   Otherwise the figure is reflected in the real axis.
  On the left, the $L_{0 \ldots 0}$'s have an even number of indices and are nested.  The region $R_{0 \ldots 0}$ with $2j$ zeros lies below the region $L_{0 \ldots 0}$ with $2j$ zeros and, far enough to the left, is inside the region $L_{0 \ldots 0}$ with $2j-2$ zeros.

  If all the $k_{n-2j}$ were $0$, $\gamma_{0}$ would go from $R_{00 \ldots 00}$ to $R$, where the number of $0$'s is $n-1$.   Similarly
   $\beta_{n-1}$  would start in $L_{00 \ldots 0}$, where the number of $0$'s is $n-2$ and end in $R_0$.  Thus the union of the curves $\gamma_0 \cup \beta_{n-1}$ would divide the plane.    In particular, on the left it would separate  the regions $R_{0\cdots 0}$, with $n-1$  $0$'s and $L_{0\cdots 0}$, with $n$  $0$'s.   Both these regions must be in $A_1$, however, so the assumption that all $k_{n-2j}=0$ implies $A_1=A_{n-1}$ and $n=2$, a contradiction.  Therefore     one of the $\beta_{n-2j}$   must join $-\infty$ to the non-zero pole $k_{n-2j} \pi i$.
\end{proof}

The following lemma will be proved in section~\ref{zeronotvc} but we state it here to complete the ideas of this section.
 \begin{lemma}\label{evenper}  If $n=2m$, $m>0$, the kneading sequence cannot have the form $S=*0k_20 \ldots 0 k_{2m-2}0$; in particular, if $n=2$, $S=*k_1$, $k_1 \neq 0$.
 \end{lemma}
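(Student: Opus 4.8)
\textbf{Proof proposal for Lemma~\ref{evenper}.}

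The plan is to argue by contradiction, running the same ``separating curve'' machinery used in Lemma~\ref{separating}, but now exploiting the parity obstruction. Suppose $n=2m$ and the kneading sequence has the form $S=*0k_20\cdots 0k_{2m-2}0$, i.e.\ all odd-indexed entries $k_1, k_3, \ldots, k_{2m-1}$ vanish while the even-indexed ones may or may not. First I would record that, since $k_{n-1}=k_{2m-1}=0$, Definition~\ref{poleseq} forces $q_{n-1}=0$ and hence $q_{n-2}=-\infty$; then reading the alternation backward around the cycle of domains exactly as in the proof of Lemma~\ref{separating}, the hypothesis that \emph{every} odd-indexed $k_i$ is zero forces $q_{n-2j-1}=0$ and $q_{n-2j}=-\infty$ for all admissible $j$, so the sequence is in fact unipolar in the sense of case~1 of Definition~\ref{poleseq}. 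But Lemma~\ref{separating} (and the remark in Definition~\ref{poleseq}) says unipolar kneading sequences occur only for odd $n$, contradicting $n=2m$. The $n=2$ special case is just $m=1$: the only candidate form is $S=*0$, which is unipolar of even period, again impossible, so $S=*k_1$ with $k_1\neq 0$.

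To make this rigorous I would spell out the alternation argument carefully rather than quoting it. Take the path $\gamma_0\subset A_0$ from $+\infty$ through $R$ to $q_0$'s preimage structure; actually, cleaner: start from the $q_i$ themselves. We know $q_{n-1}=0$ because the last kneading digit is $0$ (this is precisely how case~1/case~3 are distinguished from case~2 via whether $q_{n-1}$ is the zero pole). Since $0$ is an omitted value, its only preimage under the relevant inverse branch $g_{n-1}$ landing in $A_{n-2}$ is the asymptotic-tract end, so $q_{n-2}=-\infty$. Now $-\infty\in\partial A_{n-2}$ means $f$ maps a tract of $0$, so $0\in f(\partial A_{n-2})$-adjacency, giving $q_{n-3}$ = a pole; whether it is the zero pole or not is exactly whether $k_{n-2}=0$. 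Under our assumption the $0$'s sit in the odd slots $k_1,k_3,\dots$, so we must track which $q_i$ the zero-pole constraint hits. The key bookkeeping point is: $q_i=0$ forces $q_{i-1}=-\infty$, and $q_i=-\infty$ forces $q_{i-1}$ to be \emph{some} pole, which is the zero pole iff the corresponding kneading digit is $0$. Starting from $q_{n-1}=0$ and alternating, the zero-pole positions among the $q_i$ are exactly at the indices where the kneading digit is $0$; since those are all the odd-indexed digits, the alternation never gets ``interrupted'' by a nonzero pole, so it propagates all the way to $q_0$. But $q_0=+\infty$ by construction, and $+\infty$ sits at the end of the $R$-tract side, not the $L$/zero side — so the parity of the number of alternation steps from $q_{n-1}=0$ to $q_0=+\infty$ must be even, which (as in Lemma~\ref{separating}) forces $n$ odd. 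Contradiction with $n=2m$.

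The main obstacle I anticipate is the careful handling of the ``$+\infty$ versus $-\infty$'' endpoint parity: one must be sure that the terminal boundary point $q_0=q_n=+\infty$ genuinely lies on the $R$-side, i.e.\ that the inverse branch $g_1:A_1\to A_0$ used to close the cycle is the one whose tract end is $+\infty$ (this is fixed by Remark~\ref{lainR} and the normalization $A_0\supset R$), and that a single application of $f$ to a $0$-tract produces a neighborhood of $\infty$ whose relevant sub-tract is an $R_0$-type, not an $L_0$-type, set — which is where the estimate $f(z)\sim -\lambda/e^{-2z}$ from \eqref{basic estimate} and the L-R structure around the pole $0$ enter. Once that parity is pinned down, the count is forced and the contradiction is immediate; everything else is the same alternation already established in Lemma~\ref{separating}. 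I would also note explicitly that this is why the lemma is \emph{stated} here but its proof \emph{deferred} to Section~\ref{zeronotvc}: in that section the authors presumably have the additional input that $0$ cannot itself be a virtual center of an even-period component, which may be the cleaner route — in which case the alternative plan is to show that $S=*0k_20\cdots0k_{2m-2}0$ would make $0$ a virtual cycle parameter of even minimal period $n=2m$, contradicting the structure of even-period virtual cycles (the zero virtual cycle has period $2$, and adding it shifts period by an even amount, so an even-period hybrid/regular cycle built on the zero alternation is inconsistent with minimality).
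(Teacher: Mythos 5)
Your primary argument does not match the paper's proof, and it has a gap that I do not think can be repaired. The contradiction you want comes from ``closing up'' the alternation of the $q_i$ at $q_0=+\infty$, but the backward chain of inverse branches genuinely stops at $A_1$: the recursion $q_{i-1}=g_i(q_i)$ runs only for $i=n-1,\dots,2$, because $f:A_0\to A_1\setminus\{\la\}$ is an infinite-degree universal covering and there is no single-valued $g_1$. The value $q_0=+\infty$ is a convention (it is where the construction \emph{starts}), not something derived from $q_1$, so reaching $q_1=0$ (or $q_1=-\infty$) by alternation imposes no constraint. Nor can you invoke Lemma~\ref{separating} directly: its hypothesis is $0\in\partial A_0$, whereas the forbidden form only gives $0\in\partial A_{n-1}$ and $0\in\partial A_1$; and if you run its forward-path argument from the correct starting point ($A_0$ unbounded to the left), the cycle closes consistently for even $n$ as well, since the last step simply returns to $+\infty$ through $R\subset A_0$. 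Your reduction ``the sequence is unipolar in the sense of case~1, and unipolar forces $n$ odd'' is therefore circular here: the parity statement in case~1 is exactly what would need proving for this pattern, and the patterns are in fact different (unipolar sequences have zeros in the \emph{even} slots with first digit possibly nonzero; the forbidden form has zeros in the \emph{odd} slots, including the first).

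The deeper issue is that the obstruction is analytic, not combinatorial. The paper defers the proof to Section~\ref{zeronotvc} precisely because it needs a computation: setting $w_n(\la)=f_\la^n(\la)$, one shows by induction that $w_{2i}(\la)\to 0$ and $w_{2i+1}(\la)\to a_i\in\CC^*$ as $\la\to 0$, where $a_0=\tfrac12$ and $a_{i+1}=1/(2(1-e^{-2a_i}))$; hence no iterate $w_n(\la)$ tends to $\infty$ and $\la=0$ is not a virtual cycle parameter of any order. Since a component with kneading sequence $*0k_20\cdots k_{n-2}0$ would, by Theorem~C and Proposition~\ref{cycbehav}, have its virtual center at the pole of address $0$, i.e.\ at $\la=0$, no such component exists. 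Your ``alternative plan'' correctly locates the argument in the parameter plane (ruling out $0$ as a virtual center), but the mechanism you propose --- a parity/minimality constraint on even-period virtual cycles --- is not the right one: the computation excludes $0$ as a virtual center for \emph{every} period, odd or even, and the starting fact is simply that $\lim_{\la\to 0}\la/(1-e^{-2\la})=\tfrac12\neq\infty$. Without that computation (or something equivalent), neither of your routes yields the lemma.
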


Similar reasoning gives us
\begin{lemma}\label{regseq}
If the sequence of $q_i$'s for the attractive cycle is regular, all of the $A_i$, $i=1, \ldots n-1$ are bounded.   Moreover, there are no prepoles on $\partial A_i$ of order less than $n-i$. The kneading sequence has the form $k_1 \cdots k_n$ where for all $i$, $k_i \neq 0$.
\end{lemma}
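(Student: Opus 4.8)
The plan is to run the same ``spine'' argument as in the proof of Lemma~\ref{separating}, but now starting from the boundary point $+\infty$ of $A_0$ and following the prepoles $q_i$ backwards around the cycle via $q_{i-1}=g_i(q_i)$. By the regular hypothesis (Definition~\ref{poleseq}), $q_{n-1}=p_{k_{n-1}}$ is a \emph{non-zero} pole and, for $i=n-2,\dots,1$, each $q_i$ is a finite prepole of order $n-i\ge 2$; in particular no $q_i$ is $0$ and no $q_i$ is $-\infty$. The point is that this spine never visits the asymptotic value $0$ (nor $-\infty$), and it is precisely such visits that create unboundedness and zero entries in the L-R indices; all three conclusions will follow from this.

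I would first prove boundedness of $A_1,\dots,A_{n-1}$. Each $g_i: A_i\to A_{i-1}$, $i=2,\dots,n$, is a homeomorphism (recall $f: A_k\to A_{k+1}$ is one-to-one for $k\ge1$), and since $q_i\neq0$ the prepole $q_{i-1}$ is produced from $q_i$ by one of the \emph{genuine} locally univalent inverse branches at a pole, never by the asymptotic-tract branch over $0$. Hence the distinguished boundary prepole $q_1$ of $R_{k_1\cdots k_n}\cap A_1$ (see the discussion preceding Definition~\ref{poleseq}) has an address with all entries non-zero, and likewise each tail index $k_i\cdots k_n$ of the set $R_{k_i\cdots k_n}\subseteq A_i$ is zero-free. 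Transporting the uniqueness statement of Definition~\ref{knseqnew} backwards around the cycle, $R_{k_i\cdots k_n}$ is the \emph{unique} L-R set of its level contained in $A_i$. Now each $A_i$ with $i\ge1$ lies in a horizontal strip of height $<\pi$ in the left half-plane, so if it were unbounded it would be unbounded only to the left, and then (as recalled just before Section~\ref{kneading}) it would have to contain the pullback of $R$ around the cycle, namely an L-R set of \emph{the same level} as $R_{k_i\cdots k_n}$ but whose index ends in $0$, hence a different set. This contradicts uniqueness, so every $A_i$, $1\le i\le n-1$, is bounded.

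The form of the kneading sequence is now immediate: by Definition~\ref{knseqnew} it is the index of $R_{k_1\cdots k_n}\subseteq A_1$, which the previous step showed to be zero-free, so it has the form $k_1\cdots k_n$ with all $k_i\neq0$. For the statement about boundary prepoles, suppose $p\in\partial A_i$ is a prepole of order $m$ with $1\le m<n-i$ and $1\le i\le n-1$. By Proposition~\ref{access} there is a path in $A_i$ landing at $p$; since $f$ extends continuously to the accessible boundary prepoles, its $m$-th forward image is a path in $A_{i+m}$ landing at $f^m(p)=\infty$. But $1\le i+m<n$, so $A_{i+m}$ is one of the bounded components just found, and a bounded Fatou component has no boundary point at $\infty$ --- a contradiction. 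Hence $\partial A_i$ carries no prepole of order less than $n-i$; the spine prepole $q_i$ has order exactly $n-i$, so this is sharp.

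The step I expect to need genuine care is the middle one: keeping the ``levels'' of the two competing L-R sets matched, and verifying rigorously that in the regular case the backward spine never uses the asymptotic-tract inverse branch over $0$ --- equivalently, that a zero-free L-R index always produces a bounded region while an $A_i$ unbounded to the left forces a terminal $0$ in the relevant index. This is the same bookkeeping as in the proof of Lemma~\ref{separating} (with the roles of $0$ and $+\infty$, and of ``bounded'' and ``unbounded to the left'', interchanged), and I would model the argument on that one.
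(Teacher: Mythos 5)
Your overall architecture is reasonable, and two of the three conclusions are handled cleanly: deriving $k_i\neq 0$ directly from the hypothesis that no $q_i$ is $0$ or $-\infty$ is just the contrapositive of the paper's observation that $k_i=0$ would force $q_i=0$ and hence an unbounded $A_{i-1}$; and your forward-mapping argument for the absence of low-order prepoles (a path in $A_i$ landing at a prepole of order $m<n-i$ pushes forward to a path in $A_{i+m}$ tending to $\infty$, contradicting boundedness of $A_{i+m}$) is a correct and rather cleaner replacement for the paper's backward induction, which establishes the order statement and boundedness simultaneously.

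The step that does not stand on its own is the one you yourself flagged: boundedness. You rest it on the assertion that an unbounded $A_i$ must contain a pullback of $R$ whose index carries a terminal $0$. That assertion appears in the paper in the paragraph preceding \S\ref{kneading}, but it is not independently proved there, and it does not follow merely from ``$A_i$ is unbounded to the left'': unboundedness of $A_i$ only yields $0\in\partial A_{i+1}$ and hence unboundedness of $A_{i+2}$; it does not by itself show that the \emph{distinguished} pullback of $R$ (the one attached to the spine point $q_i$) is the one passing through the asymptotic tract of $0$. A priori $A_i$ could be unbounded while $q_i$ stays a finite prepole and $R_{k_i\cdots k_n}$ sits in a bounded part of $A_i$, so invoking uniqueness of the L-R set of that level does not yet produce a contradiction. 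The paper closes exactly this hole by a different mechanism: if $A_{n-1}$ were unbounded, then $0\in\partial A_0$, and since $A_0\supset R$ is invariant under translation by $\pi i$, \emph{every} pole lies on $\partial A_0$; consequently $A_{n-1}$ would have to meet two distinct preimages of $R$, namely $R_0$ and $R_{k_{n-1}}$ (the latter forced by the hypothesis that $q_{n-1}=k_{n-1}\pi i\neq 0$), contradicting injectivity of $f:A_{n-1}\to A_0$. Boundedness then propagates backward: $q_{n-2}$ is a finite prepole of order $2$, the only pole on $\partial A_{n-1}$ is $k_{n-1}\pi i$, so every prepole on $\partial A_{n-2}$ has order at least $2$ and is finite, hence $A_{n-2}$ is bounded, and so on around the cycle. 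You should replace the appeal to the pre-\S\ref{kneading} remark with this (or an equivalent) argument; as written, that appeal is essentially circular.
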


\begin{proof}  If $A_{n-1}$ is unbounded, then because $n>1$, it intersects $L$, and as above, $0 \in \partial A_0$ and by periodicity, all the poles are in $\partial A_0$.  In particular, since $q_{n-1}$ is a non-zero pole, $k_{n-1}\pi i$,  $\partial A_{n-1}$ contains two poles.  We saw above that $A_0$ cannot intersect any of the sets $R_k$ so that $A_{n-1}$ must intersect both the sets $R_0$ and $R_{k_{n-1}}$.  Since $f$ is periodic,   this implies $f:A_{n-1} \rightarrow A_0$ is not a homeomorphism.  Going backwards around the cycle, the rest of the $A_i$, $i>0$ are bounded as well.   

Now $A_{n-1}$ is bounded and has only one pole, $k_{n-1}\pi i$, on its boundary and it intersects $R_{k_{n-1}}$.   By hypothesis, $q_{n-2}$ is a  prepole of order $2$ so it is finite.  Since that is the only pole on $\partial A_{n-1}$ and $A_{n-1}$ is bounded, all the prepoles on  $\partial A_{n-2}$ are of order at least $2$ and thus are finite which implies $A_{n-2}$ is bounded.   Continuing backwards around the cycle, all the $A_i$, $i>0$ are bounded.   The sets  $R_{k_i \cdots k_{n-1}}$ are all bounded sets as well since if any of the $k_i = 0$, then $q_i =0$ and $A_{i-1}$ would not be bounded. 
 \end{proof}
 
 The ideas in the proofs above combine to prove:
 
 \begin{lemma}\label{hybseq} If the sequence of $q_i$'s for the attractive cycle is hybrid, all of the $A_i$, $i=1, \ldots, K$ are bounded and the rest of the components are alternately unbounded or have $0$ on their boundary.
 \end{lemma}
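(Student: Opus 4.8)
The plan is to decompose the cycle of immediate-basin components into a ``regular head'' $A_1,\dots,A_K$ and a ``bouncing tail'' $A_{K+1},\dots,A_{n-1}$ according to the hybrid pattern of the $q_i$, and then to quote on each piece the argument already developed for the corresponding special case---Lemma~\ref{regseq} for the head and Lemma~\ref{separating} for the tail. By the hybrid hypothesis (Definition~\ref{poleseq}, case~3) there is an index $K$ with $1<K<n-1$ such that $q_K$ is a non-zero pole $k_K\pi i$, the points $q_{K-1},\dots,q_1$ are prepoles whose order increases as the index decreases, and, for $K<i\le n-1$, the $q_i$ alternate: $q_i=-\infty$ when $i-K$ is odd and $q_i=0$ when $i-K$ is even, ending with $q_{n-1}=0$. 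As a preliminary, since a hybrid sequence is not of the unipolar form $*k_10\cdots0k_{n-2}0$ of Lemma~\ref{separating} (it has a non-zero entry at an even position), that lemma's contrapositive gives $0\notin\partial A_0$; by the $\pi i$-periodicity of $A_0$ no pole then lies on $\partial A_0$, and hence, exactly as in the proof of Lemma~\ref{regseq}, $A_0$ meets none of the sets $R_k$ of the L--R structure.

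For the tail, $i=K+1,\dots,n-1$, I run the forward sweep from the proof of Lemma~\ref{separating} starting at $A_{K+1}$. Since $q_{K+1}=f(q_K)=\infty$ and $A_{K+1}$ lies in a left half-plane of height less than $\pi$, it has $-\infty$ on its boundary, is unbounded, and meets $L$; then $f(A_{K+1})=A_{K+2}$ meets a punctured neighbourhood of $0$, so $0\in\partial A_{K+2}$; then $f(0)=\infty$ forces $A_{K+3}$ unbounded; and so on up to $A_{n-1}$. This is exactly the asserted alternation: the components $A_i$ with $q_i=-\infty$ are unbounded and those with $q_i=0$ have $0$ on their boundary, and after $A_{n-1}$ the orbit returns to $A_0$.

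For the head, $i=1,\dots,K$, I run the backward induction from the proof of Lemma~\ref{regseq} along the sub-chain $A_1\to\cdots\to A_K\to A_{K+1}$. Base step: $A_K$ is bounded; its boundary contains the single non-zero pole $k_K\pi i$, and were $A_K$ also unbounded it would meet $L$, so by periodicity $0$ would lie on $\partial A_K$ as well, whence $A_K$ would meet both $R_0$ and $R_{k_K}$, which is impossible since $f: A_K\to A_{K+1}$ is injective ($K<n$)---this is verbatim the argument of Lemma~\ref{regseq}. Inductive step: if $A_{j+1}$ is bounded with $j+1\le K$, then no pole lies on $\partial A_j$ (a pole there would map onto $\infty\in\partial A_{j+1}$), so in particular $0\notin\partial A_j$; and $A_j$ is bounded, since otherwise it meets $L$ and $f(A_j\cap L)\subset A_{j+1}$ forces $0\in\partial A_{j+1}$, contradicting that the only pole accessible on $\partial A_{j+1}$ is $k_K\pi i$ (when $j+1=K$) or none (when $j+1<K$). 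Hence $A_1,\dots,A_K$ are all bounded, and the two descriptions are consistent at the junction because $A_K$ bounded with $k_K\pi i$ on its boundary puts $\infty$ on $\partial A_{K+1}$, which launches the alternation. The one load-bearing step is the base case, i.e.\ keeping the polar asymptotic value $0$ off $\partial A_K$: this is precisely where the hybrid hypothesis enters, the non-zero pole $q_K$ acting as a screen between the bounded head and the bouncing tail, and it is settled by the regular-case reasoning of Lemma~\ref{regseq} applied to the truncated chain; everything else is routine transport of the $q_i$-pattern of Definition~\ref{poleseq} under $f$ and its inverse branches.
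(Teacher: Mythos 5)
Your overall architecture --- a bounded ``head'' handled by the argument of Lemma~\ref{regseq} and a bouncing ``tail'' handled by the forward sweep of Lemma~\ref{separating} --- is exactly the combination the paper intends (the paper offers no written proof, only the remark that the ideas in the two preceding proofs combine), and your preliminary observation ($0\notin\partial A_0$ by the contrapositive of Lemma~\ref{separating}) and your treatment of the tail are correct. The gap is in the base step of the backward induction. From ``$A_K$ is unbounded, hence meets $L$'' you conclude ``by periodicity $0$ would lie on $\partial A_K$''; this does not follow. Meeting $L$ arbitrarily far to the left puts $0$ on the boundary of the \emph{image} $f(A_K)=A_{K+1}$, not of $A_K$ itself, and the $\pi i$-periodicity argument applies only to $A_0$, which contains a right half-plane and is therefore invariant under $z\mapsto z+\pi i$; for $K\ge 1$ the component $A_K$ lies in a horizontal strip of height at most $\pi$ and is not periodic. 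The subsequent injectivity contradiction also fails to typecheck when $K<n-1$: the sets $R_0$ and $R_{k_K}$ are first-level pullbacks of $R$ and map onto $R\subset A_0$, so they sit inside components of $f^{-1}(A_0)$; since $f(A_K)=A_{K+1}\ne A_0$ and $A_{K+1}\cap R=\emptyset$, the set $A_K$ cannot contain either of them, and their presence or absence says nothing about the injectivity of $f\colon A_K\to A_{K+1}$.

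The step can be repaired, but by a different mechanism: if $A_K$ were unbounded one would get $0\in\partial A_{K+1}$, hence $\infty\in\partial A_{K+2}$, hence $A_{K+2}$ unbounded to the left, and so on; since $n-1-K$ is even this forward propagation terminates with $A_{n-1}$ unbounded and therefore $0\in\partial A_0$, contradicting your (correct) preliminary observation. Note also that your inductive step at $j+1=K$ silently uses the stronger statement that $k_K\pi i$ is the \emph{only} pole on $\partial A_K$, in particular that $0\notin\partial A_K$; boundedness of $A_K$ alone does not give this, and ruling out a second pole on $\partial A_K$ requires the Lemma~\ref{regseq} mechanism applied to the injective return map $f^{\,n-K}\colon A_K\to A_0$ and the level-$(n-K)$ pullbacks of $R$, not to the single step $f\colon A_K\to A_{K+1}$. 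As written, the head of your argument therefore rests on two implications that are false in the form stated, even though the correct repairs stay entirely within the toolkit you are already quoting.
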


   The lemmas above show that if $\la$ is pseudo-hyperbolic and $n>1$, not all entries in the kneading sequence can be zero, and that if $n$ is even, the sequence cannot have the form $*0k_20 \ldots 0k_{n-1}0$.    Thus we have
\begin{defn}\label{defn9} A kneading sequence $*k_{1}k_{2} \ldots k_n$ is {\em allowable} if it corresponds to a unipolar, regular or hybrid sequence of $q_i'$.  In particular, if   $n>1$,  all sequences are allowable as kneading sequences EXCEPT those such that all entries are  zero and such that   if $n$ is even, the sequence cannot have the form $*0k_20 \ldots 0k_{n-1}0$.
\end{defn}

The lemmas immediately imply

\begin{thm}[Theorem B]\label{thmB}
Let $f_\lambda $ be a pseudo-hyperbolic map whose attracting cycle has period $n$. Then it has a unique kneading sequence $S=*k_1\cdots k_{n-1}$ that  is either regular,  unipolar or hybrid.
\end{thm}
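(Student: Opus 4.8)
The plan is to assemble Theorem~\ref{thmB} directly from the structural lemmas already in hand, treating it as a bookkeeping result rather than a new argument. First I would observe that existence and uniqueness of \emph{some} kneading sequence is immediate: given a pseudo-hyperbolic $f_\la$ with attracting cycle of period $n>1$, the immediate basin components $A_0,\ldots,A_{n-1}$ are well defined once we fix the convention $\la\in A_1$ and $R\subset A_0$ (as set up before Definition~\ref{knseqnew}), and since $A_1$ is a component of $f^{-(n-1)}(A_0)$ with $R\subset A_0$, there is exactly one set $R_{k_1\cdots k_n}$ in the L-R structure of length $n$ contained in $A_1$. Its index, with the leading entry replaced by $*$, \emph{is} $S(\la)=*k_1\cdots k_{n-1}$ by Definition~\ref{knseqnew}. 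So uniqueness is built into the definition, and the only content of the theorem is the trichotomy.

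Next I would invoke Definition~\ref{poleseq}: the backward-orbit boundary points $q_0=q_n=+\infty$, $q_{i-1}=g_i(q_i)$ are canonically attached to the cycle, each $q_i$ is a prepole, or $0$, or $-\infty$, and $q_{n-1}$ is always a pole. The definition partitions the possible sequences $(q_i)$ into exactly three types --- unipolar (case 1), regular (case 2), hybrid (case 3) --- according to whether $q_{n-1}=0$ with the whole backward chain alternating $0,-\infty$, or $q_{n-1}$ is a non-zero pole, or $q_{n-1}=0$ with the alternation breaking off at some non-zero pole after finitely many steps. These three cases are manifestly exhaustive and mutually exclusive for \emph{any} finite sequence of such boundary points (either $q_{n-1}$ is $0$ or it is a non-zero pole; in the former case either the alternation $q_{n-2j+1}=0,\ q_{n-2j}=-\infty$ persists all the way back --- case 1 --- or it first fails at some step --- case 3). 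Then I would cite that the $q_i$ are precisely the boundary points of the sets $R_{k_i\cdots k_n}\cap A_i$, so the $q_i$-type of the cycle determines, and is determined by, the pattern of zeros in the kneading sequence; hence the kneading sequence inherits the same trichotomy. The remaining constraints --- that not all entries can be zero, and that for even $n$ the sequence cannot be $*0k_20\cdots 0k_{n-1}0$ --- are supplied verbatim by Lemma~\ref{separating} (forcing a non-zero odd-indexed entry and odd $n$ in the unipolar case) and Lemma~\ref{evenper}, with Lemma~\ref{regseq} and Lemma~\ref{hybseq} confirming that the regular and hybrid cases are realized with exactly the claimed shapes. Thus $S$ is allowable in the sense of Definition~\ref{defn9}, which is the assertion of the theorem.

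Concretely, I would write: \emph{Existence and uniqueness of $S=S(\la)$ follow from Definition~\ref{knseqnew} together with the uniqueness of the length-$n$ set $R_{k_1\cdots k_n}\subset A_1$. That $S$ is regular, unipolar, or hybrid is the trichotomy of Definition~\ref{poleseq} applied to the sequence $(q_i)$, transported to $S$ via the identification of $q_i$ with the distinguished boundary point of $R_{k_i\cdots k_n}\cap A_i$; Lemmas~\ref{separating}, \ref{evenper}, \ref{regseq} and \ref{hybseq} show the three cases occur with exactly the stated forms, and that no other sequence (all-zero, or $*0k_2\cdots 0$ for even $n$) can arise.} The one genuine subtlety, and the place where the writeup must be careful rather than glib, is the claim that the \emph{zero-pattern} of the kneading sequence is faithfully recorded by the $q_i$'s --- i.e. that $q_i=0$ (or $-\infty$) exactly when the corresponding slot in the index is forced to be $0$. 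This is the only step that is not pure definition-chasing, and it is where I would lean most heavily on the L-R-structure bookkeeping (Remark~\ref{RLorder} and the nesting of the $L_{0\cdots 0}$ and $R_{0\cdots 0}$ regions) already used inside Lemma~\ref{separating}; so I expect that identification, not the trichotomy itself, to be the main obstacle, and I would make sure the statements of the preceding lemmas are phrased so that it is transparent there.
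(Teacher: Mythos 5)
Your proposal is correct and follows essentially the same route as the paper, which likewise derives Theorem B as an immediate consequence of Definition~\ref{knseqnew} (for existence and uniqueness of $S$), the exhaustive trichotomy of Definition~\ref{poleseq}, and Lemmas~\ref{separating}, \ref{evenper}, \ref{regseq} and \ref{hybseq} for the three admissible shapes. Your closing remark correctly locates where the real work sits — in the L-R-structure bookkeeping inside those lemmas rather than in the assembly itself — which is exactly how the paper distributes the argument.
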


\begin{remark}
In the dynamics of regular pseudo-hyperbolic maps  the polar asymptotic value $0$ is not on the boundary of any of the components of the immediate basin.  It is therefore  similar to the dynamics of hyperbolic maps investigated in earlier work on purely meromorphic functions, all of whose asymptotic values are finite, see \cite{KK, DK, FK, CJK1, CJK2, CJK2, CK1, CK2}.     This  is not true for unipolar or hybrid pseudo-hyperbolic maps since $0$ is always on the boundary of $A_{n-1}$.  For these maps, the dynamics  also exhibit  properties similar to those of exponential maps (see e.g. \cite{DFJ,S}); note that for these exponential  maps, one asymptotic value is infinity --- and thus a pole of order $0$.    This similarity
  is particularly evident when
  the sequence is unipolar.    In our discussion of parameter space below, it will be useful to separate the unipolar  from the regular and hybrid  cases.
  \end{remark}

\subsection{Examples}
In this section we show examples of the dynamical plane for functions with each of the types of kneading sequences.   In  Figure~\ref{dynpics1},  the figure on the left is for a period 1 function  with $S=*$; in the color version, the yellow area is the Fatou set.  The figure on the right is in a regular period 3 component with kneading sequence $*-2-1$ and the Fatou set is shown in red in the color version.   Figure~\ref{dynpics2} shows  two period 5 examples; on the left is a unipolar example with kneading sequence $*0020$  and on the right is a hybrid example with sequence $*1110$.  In both, the Fatou set is shown in green.  In each figure the poles are visible as small disks.  This is due to roundoff.

\begin{figure}[ht]
  \centering
  \begin{subfigure}[b]{0.5\linewidth}
    \centering\includegraphics[width=5.8cm]{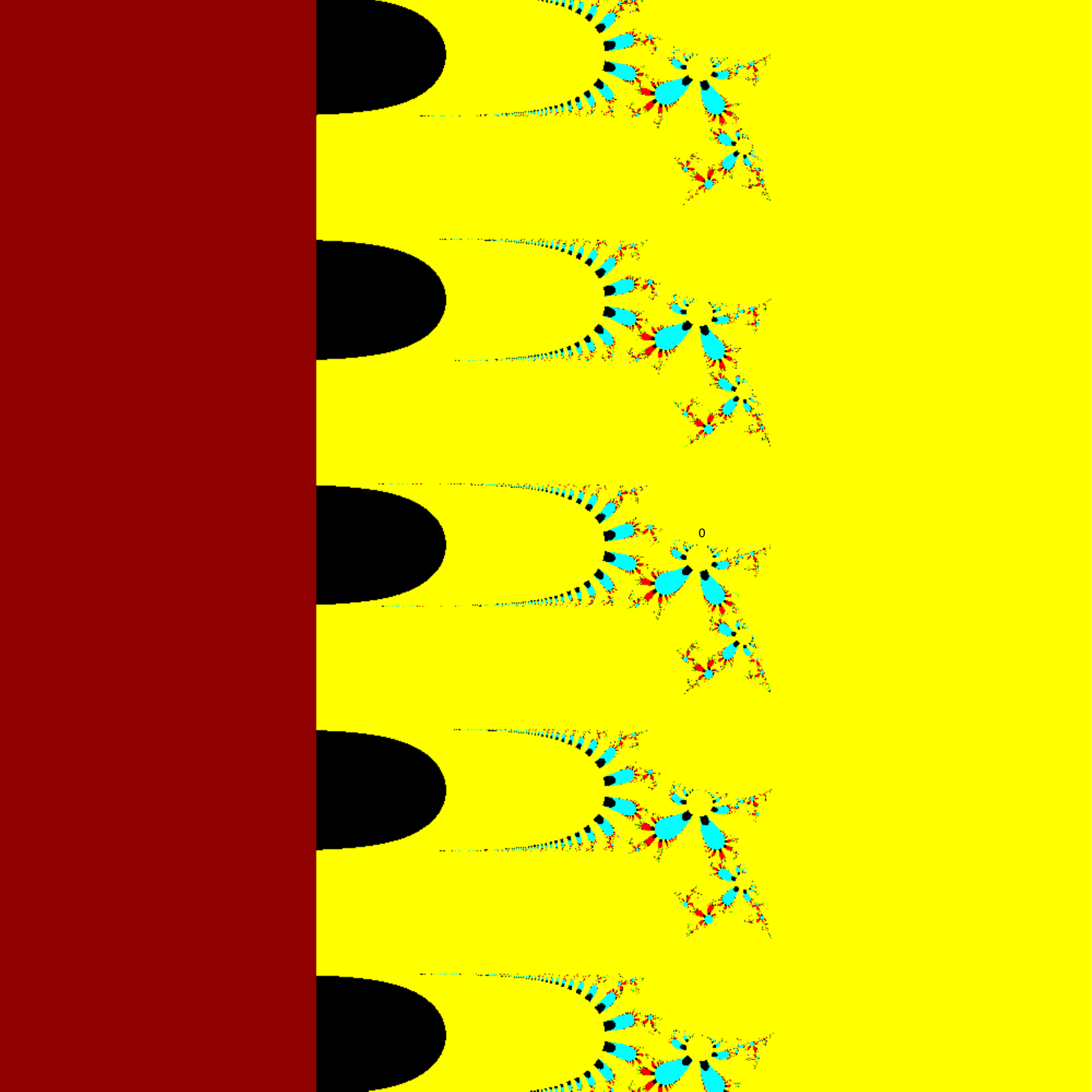}
     \caption{Regular Period $1$ $\ \ \ \ \ \ \ \ \ \ \ \ \ \ \ $\label{fig:fig1}}

  \end{subfigure}%
  \begin{subfigure}[b]{0.5\linewidth}
    \centering\includegraphics[width=7.6cm]{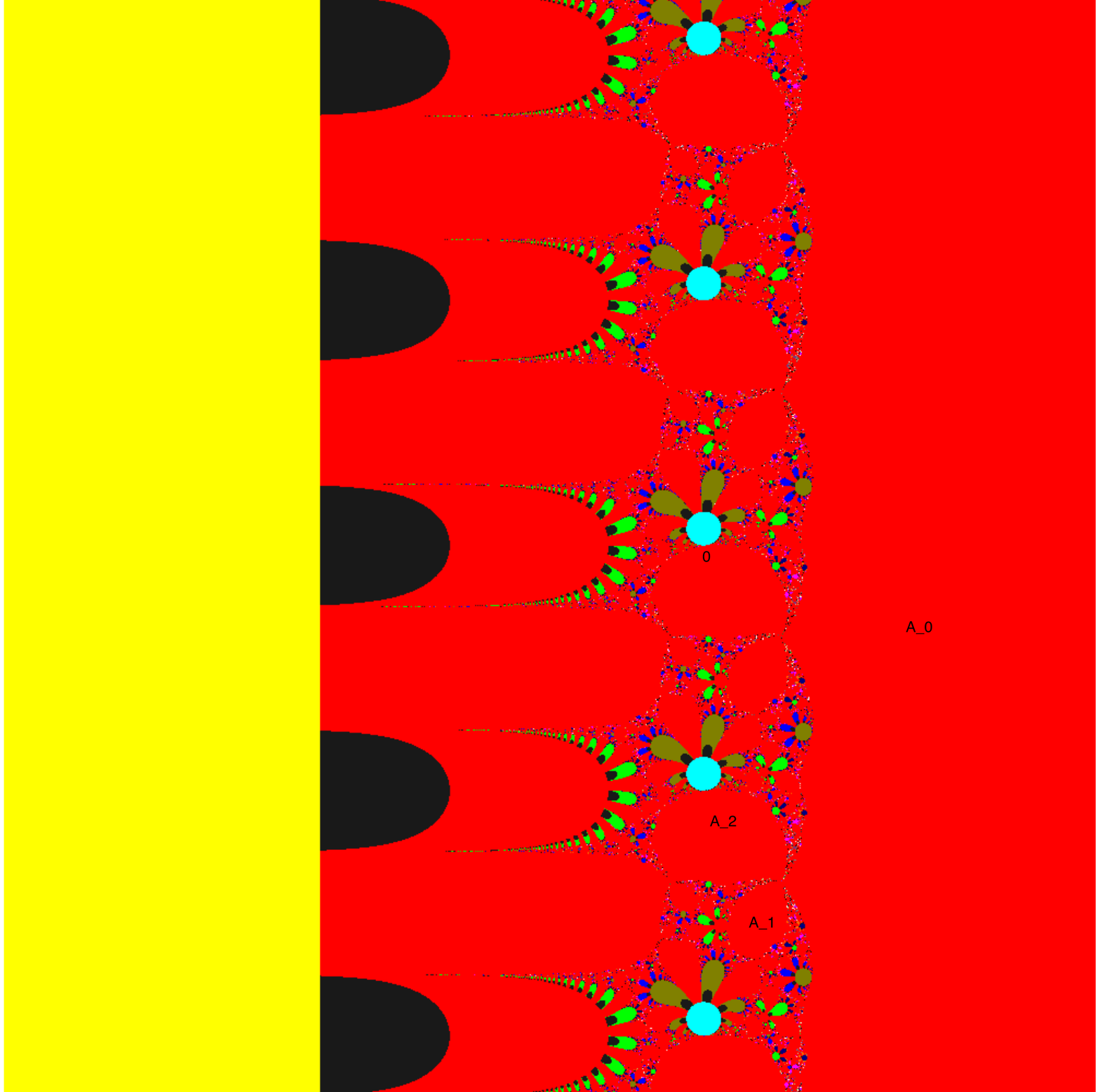}
     \caption{                                                 Regular Period $3$\label{fig:fig2}}
    \end{subfigure}

  \caption{}
  \label{dynpics1}
\end{figure}

\begin{figure}[ht]
  \centering
  \begin{subfigure}[b]{0.5\linewidth}
    \centering\includegraphics[width=5.7cm]{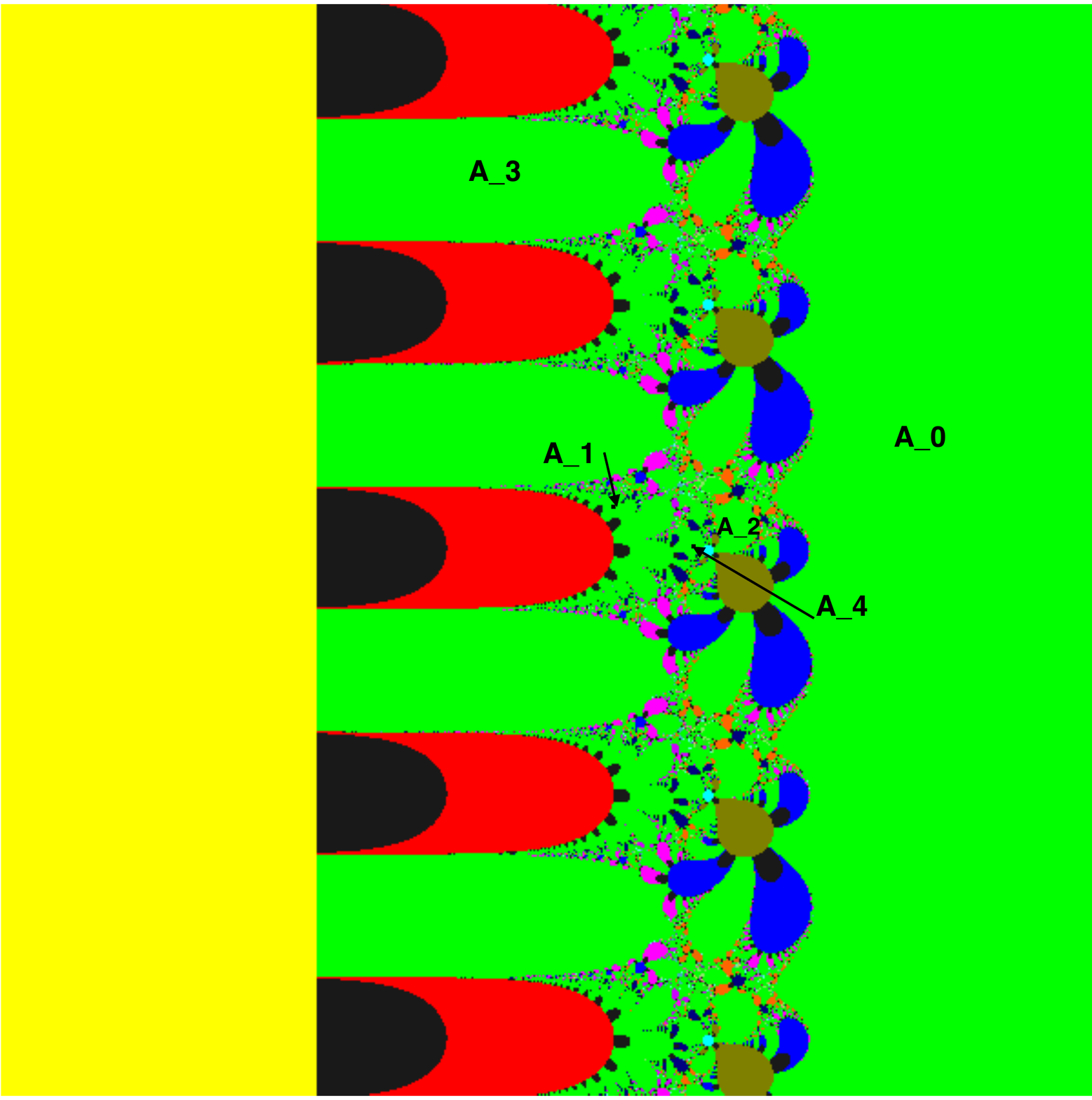}
    \caption{Unipolar period $5$\label{fig:fig1}}
  \end{subfigure}%
  \begin{subfigure}[b]{0.5\linewidth}
    \centering\includegraphics[width=7.4cm]{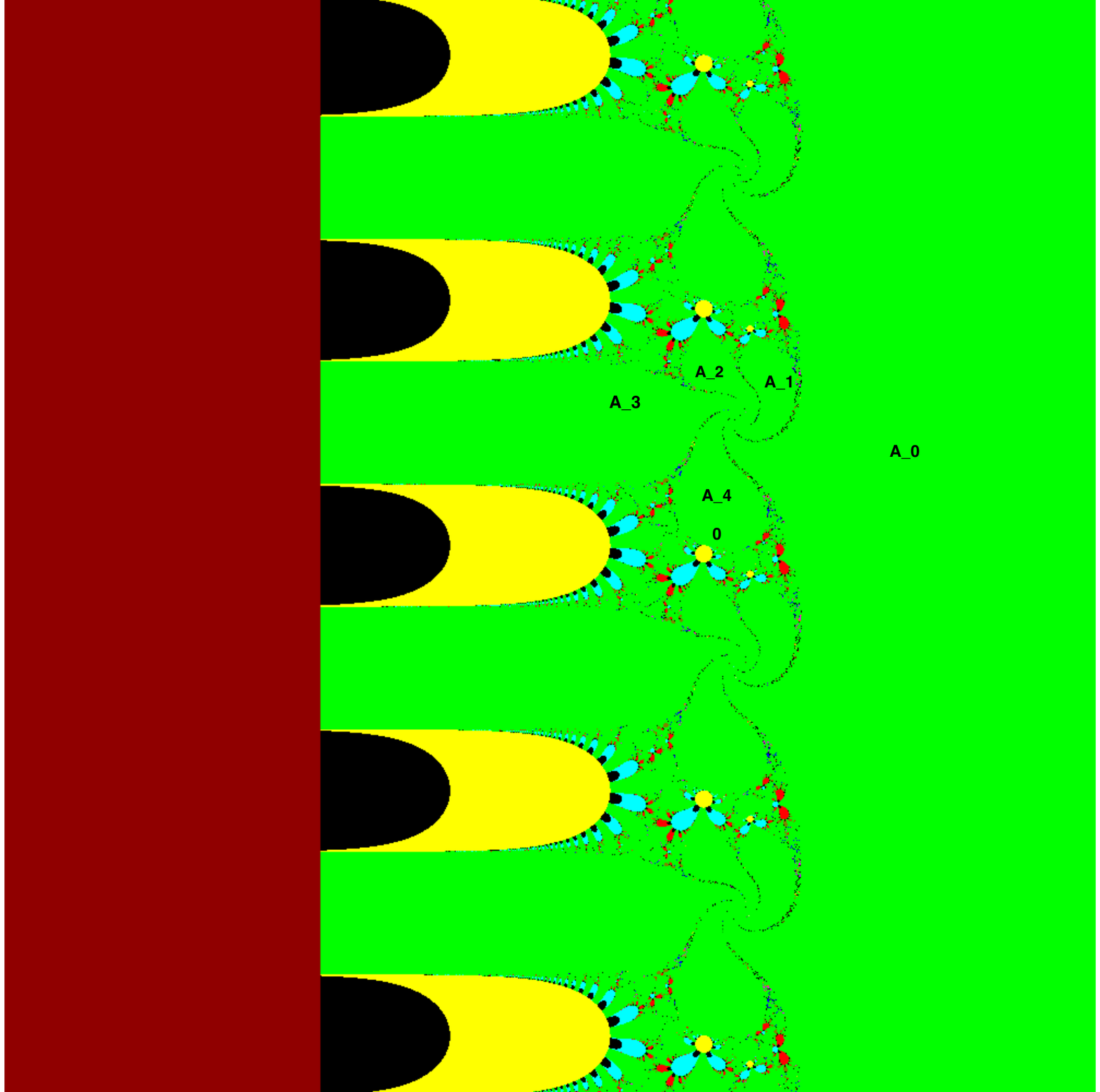}
    \caption{Hybrid period $5$\label{fig:fig2}}
  \end{subfigure}
  \caption{}
   \label{dynpics2}
\end{figure}

\section{The Parameter plane of $\mathcal{FP}_2$}
\label{paramsection}
We now turn our attention to how the dynamics change as we vary the parameter
  $\lambda\in \mathbb{C}^*$.   Note that $\la=0$ is a singularity since $f_{\la}$ is not defined.

\begin{lemma}
The parameter space is symmetric with respect to the real line.
\end{lemma}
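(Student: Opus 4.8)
The plan is to exhibit an explicit symmetry of the family $\calfp_2$ that is induced by complex conjugation. First I would observe that for the function $f_\la(z) = \lambda/(1-e^{-2z})$, taking complex conjugates of all data yields
\[
\overline{f_\la(z)} = \frac{\bar\lambda}{1-e^{-2\bar z}} = f_{\bar\lambda}(\bar z).
\]
In other words, if $c(z) = \bar z$ denotes complex conjugation, then $c \circ f_\la = f_{\bar\la} \circ c$ as maps $\CC \to \chat$. This is the key identity, and it says precisely that $f_{\bar\la}$ is conjugate to $f_\la$ via the anti-holomorphic involution $c$.

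Next I would deduce the dynamical consequence: since $c$ is a homeomorphism of $\chat$ fixing $\infty$ and $0$, conjugating by $c$ carries the iterates of $f_\la$ to the iterates of $f_{\bar\la}$, hence carries the Julia set, Fatou set, periodic cycles (with multipliers going to their complex conjugates, so attracting cycles to attracting cycles of the same period), poles, prepoles, asymptotic tracts and the whole L-R structure of $f_\la$ to the corresponding objects of $f_{\bar\la}$. In particular $\la$ lies in a pseudo-hyperbolic component, or more generally in any dynamically defined subset of the parameter plane, if and only if $\bar\la$ lies in the mirror-image subset. Therefore the partition of $\CC^*$ into components with a given qualitative dynamical behavior — and the parameter plane as a whole — is invariant under $\la \mapsto \bar\la$, i.e. symmetric with respect to the real axis. (One should note $\la \mapsto \bar\la$ preserves $\CC^* = \CC \setminus \{0\}$, so the statement makes sense.)

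I do not expect a serious obstacle here; the only thing requiring a word of care is to make precise what ``the parameter space is symmetric'' means, namely that the conjugacy $c$ intertwines $f_\la$ and $f_{\bar\la}$ so that every conformally- (or topologically-) conjugacy-invariant decomposition of parameter space is carried to itself by reflection, and in particular the loci where $\la$ is a prepole of a given order, the virtual cycle parameters, the virtual centers, and the pseudo-hyperbolic components all come in complex-conjugate pairs. The proof is then just the display above together with the remark that conjugation reverses orientation, which is why the roles of $\Im\la>0$ and $\Im\la<0$ (and the corresponding labelling conventions for $R_{00}$, the lines $\ell_n^\pm$, etc., fixed earlier in the paper) get interchanged under the reflection — consistent with all the ``the sign depends on $\Im\la$'' clauses in Section~\ref{family}.
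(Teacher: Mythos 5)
Your proof is correct and uses exactly the paper's argument: the identity $\overline{f_\lambda(z)}=f_{\bar\lambda}(\bar z)$ showing that $f_\lambda$ and $f_{\bar\lambda}$ are conjugate by complex conjugation. The paper states only this identity and leaves the dynamical consequences implicit; your additional elaboration is accurate but not needed.
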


\begin{proof}  It is easy to check that $$\overline{f_\lambda(z)}=f_{\overline{\lambda}}(\overline{z}),$$ that is $f_\lambda(z)$ and $f_{\overline{\lambda}}(z)$ are conjugate.  \end{proof}

Before we study the properties of the parameter plane we prove two propositions about real values of $\la$.
We prove that if $\la \in \RR^+$, then $\la$ is attracted to a real fixed point  and if 
 $\la \in \RR^-$,  the orbit of $\la$ accumulates on the zero virtual cycle and  thus is not pseudo-hyperbolic.

\begin{prop}\label{posla} 
The positive real line $\lambda =x, x>0 $ is contained in  a component of   $\mathcal H_1$ of the parameter plane where $f_{\la}$ has an attracting fixed point. 

\end{prop}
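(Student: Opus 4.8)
\textbf{Proof proposal for Proposition~\ref{posla}.}

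The plan is to study the real dynamics of $f_\lambda(x)=\lambda/(1-e^{-2x})$ directly for $\lambda=x_0>0$ and then invoke $\lambda$-stability to conclude that the whole positive real axis lies in a single component of $\mathcal H_1$. First I would restrict $f_{x_0}$ to the real line. For $x>0$ we have $0<1-e^{-2x}<1$, so $f_{x_0}(x)>x_0>0$; and $f_{x_0}$ is strictly decreasing on $(0,\infty)$ with $f_{x_0}(0^+)=+\infty$ and $f_{x_0}(+\infty)=x_0$. Hence $f_{x_0}$ maps the interval $I=(x_0,+\infty)$ into itself. Since the asymptotic value $\lambda=x_0$ lies on the boundary of $I$ and the map is monotone on $I$, I would show $f_{x_0}$ has a unique fixed point $z^*\in I$: consider $g(x)=f_{x_0}(x)-x$, which is continuous and strictly decreasing on $(0,\infty)$, with $g(0^+)=+\infty$ and $g(+\infty)=x_0-\infty=-\infty$, so there is exactly one zero $z^*>0$, and $z^*>x_0$ because $g(x_0)=f_{x_0}(x_0)-x_0>0$.

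Next I would check that $z^*$ is attracting, i.e.\ $|f_{x_0}'(z^*)|<1$. Compute $f_{x_0}'(x)=-2x_0 e^{-2x}/(1-e^{-2x})^2$, which is negative on $(0,\infty)$. The estimate I want is $f_{x_0}'(z^*)>-1$. One clean way: on $I=(x_0,\infty)$ the map $f_{x_0}$ is a decreasing homeomorphism onto $(x_0,\infty)$ whose graph meets the diagonal only at $z^*$; since $f_{x_0}\circ f_{x_0}$ is increasing on $I$ and also has $z^*$ as its only fixed point there (an orbit of period $2$ in $I$ would give a second fixed point of $f_{x_0}\circ f_{x_0}$, hence by monotonicity a crossing of the diagonal by $f_{x_0}$, contradiction), every orbit in $I$ converges monotonically-in-pairs to $z^*$, forcing $|f_{x_0}'(z^*)|\le 1$; and the inequality is strict because $f_{x_0}'$ is nowhere zero, so a neutral fixed point with $f_{x_0}'(z^*)=-1$ would be parabolic and its basin would still have to absorb the asymptotic value $x_0$, which is impossible since $x_0\in\partial I$ is not in the open interval where attraction takes place. (Alternatively one computes $f_{x_0}'(z^*)$ explicitly using $z^*=\lambda/(1-e^{-2z^*})$, i.e.\ $e^{-2z^*}=1-x_0/z^*$, giving $f_{x_0}'(z^*)=-2z^{*2}(1-x_0/z^*)/x_0=-2z^*(z^*-x_0)/x_0$, and one checks $0<2z^*(z^*-x_0)/x_0<1$ from $g(x_0)>0$ and the shape of $g$; I would fill in this elementary estimate.) In either case $z^*$ is an attracting fixed point, and since $x_0\in I$ and $f_{x_0}(I)\subset I$ with the only fixed point $z^*$, the orbit of the asymptotic value $x_0$ converges to $z^*$. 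Thus $f_{x_0}$ is pseudo-hyperbolic with an attracting fixed point, for every $x_0>0$.

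Finally, to see that all these $\lambda=x_0>0$ lie in one component $\mathcal H_1$: the set of $\lambda$ for which $f_\lambda$ has an attracting fixed point is open (the attracting fixed point and its multiplier vary holomorphically, by the implicit function theorem applied to $f_\lambda(z)=z$, and the condition $|m|<1$ is open), and on this set the maps are $\lambda$-stable in the sense of Ma\~n\'e--Sad--Sullivan (as discussed in the excerpt following \cite{ABF,MSS}); in particular each such $\lambda$ has a connected neighborhood in which all maps have an attracting fixed point. The positive real axis $(0,\infty)$ is connected and contained in this open set, hence lies entirely within a single component, which we name $\mathcal H_1$. This completes the proof.

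\medskip

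The main obstacle I expect is the multiplier estimate $|f_{x_0}'(z^*)|<1$: ruling out the neutral/parabolic case $f_{x_0}'(z^*)=-1$ and, more importantly, ruling out that the real fixed point is actually repelling with an attracting $2$-cycle on the real line elsewhere. The monotonicity argument (strictly decreasing $f_{x_0}$ on $(0,\infty)$, strictly decreasing $g(x)=f_{x_0}(x)-x$, uniqueness of the diagonal crossing) is the key structural fact that forces the real fixed point to be the attractor, and I would want to present that carefully; the explicit formula $f_{x_0}'(z^*)=-2z^*(z^*-x_0)/x_0$ together with $z^*-x_0 = f_{x_0}(z^*)-x_0$ small relative to $z^*$ should give the quantitative bound without real pain.
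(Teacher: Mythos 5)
Your existence argument and your final connectivity step are fine, and your ``alternative'' computation is in fact the paper's own proof: the paper rewrites the fixed--point equation as $\lambda=x(1-e^{-2x})$ and reads off the multiplier $-2xe^{-2x}/(1-e^{-2x})$, which lies in $(-1,0)$ for every $x>0$ because $2x<e^{2x}-1$. Your formula $f_{x_0}'(z^*)=-2z^*(z^*-x_0)/x_0$ is the same quantity, since $z^*-x_0=z^*e^{-2z^*}$; the ``elementary estimate'' you defer is exactly the inequality $2z^*<e^{2z^*}-1$, so that branch of your argument closes immediately.

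The soft dynamical argument you lead with, however, has two genuine gaps. First, the claim that a period-$2$ orbit of $f_{x_0}$ in $I$ would ``by monotonicity'' force a second crossing of the diagonal by $f_{x_0}$ is false: a strictly decreasing interval map can have a unique fixed point and many $2$-cycles (e.g.\ $x\mapsto 1/x$ on $(0,\infty)$, for which $f\circ f$ is the identity). So you have not shown that $z^*$ is the only fixed point of $f_{x_0}\circ f_{x_0}$ on $I$, and the conclusion $|f_{x_0}'(z^*)|\le 1$ does not follow by that route. Second, your exclusion of the case $f_{x_0}'(z^*)=-1$ on the grounds that a parabolic basin ``would still have to absorb the asymptotic value $x_0$, which is impossible since $x_0\in\partial I$'' is not an argument: a parabolic basin is an open subset of the plane, and nothing prevents the orbit of $x_0$ (which immediately enters the open interval $I$) from lying in it. Discard the soft argument and keep the explicit multiplier computation; it is both necessary and sufficient here, and it is what the paper does.
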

\begin{proof}
For any real $\lambda>0$,  $$\lim_{x\to 0^+}f_\lambda(x)=\infty \mbox{  and } \lim_{x\to \infty} f_\lambda(x)=\lambda.$$ Therefore the equation $f_\lambda(x)=x$, written out as
 $$\frac{\lambda}{1-e^{-2x}}=x,$$ always has a positive solution.  Rewriting as  $\lambda=x(1-e^{-2x})$, we see that the fixed point $x$ is increasing with respect to $\lambda$. In particular, $x\to 0$ as $\lambda\to 0^+$ and $x\to \infty $ as $\lambda\to \infty$.

As above, the multiplier of the fixed point is $$\frac{-2\lambda e^{-2x}}{(1-e^{-2x})^2}=\frac{-2xe^{-2x}}{1-e^{-2x}}.$$ It tends to $-1$ as $x\to 0^+$ and to $0$ as $x\to \infty$.
\end{proof}

\begin{prop}\label{negescape}
If $\lambda<0$, the asymptotic value $\lambda$ is attracted by the  zero virtual cycle;  that is, $\RR^-$ does not intersect any pseudo-hyperbolic component.
\end{prop}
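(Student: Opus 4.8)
The plan is to follow the real orbit of $\lambda$ directly and show it escapes to the zero virtual cycle. Write $\lambda=-a$ with $a>0$. A short computation shows that $f_\lambda$ maps $(0,\infty)$ into $(-\infty,-a)$ and $(-\infty,0)$ into $(0,\infty)$, so the orbit of $\lambda$ never meets $0$ or $\infty$ (in particular $\lambda$ is not a prepole), it stays on the real line, and its terms alternate in sign: $\lambda<0$, $f_\lambda(\lambda)=a/(e^{2a}-1)>0$, $f_\lambda^2(\lambda)<0$, and so on. Moreover $f_\lambda'(z)=-2\lambda e^{-2z}(1-e^{-2z})^{-2}=2ae^{-2z}(1-e^{-2z})^{-2}>0$ at every real $z\neq 0$, so $f_\lambda$ is strictly increasing on each of $(-\infty,0)$ and $(0,\infty)$. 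Hence $g:=f_\lambda^{2}$ is a continuous, strictly increasing self-map of $(0,\infty)$ with $\lim_{x\to 0^+}g(x)=0$, and the positive iterates $x_k:=f_\lambda^{2k-1}(\lambda)$ form the orbit $x_{k+1}=g(x_k)$ of $x_1=a/(e^{2a}-1)$ under $g$.

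The core of the argument is to show that $g$ has no fixed point in $(0,\infty)$, equivalently that $f_\lambda$ has no real periodic point of period $1$ or $2$. There is no real fixed point, since $f_\lambda(x)=x$ would give $-a=x(1-e^{-2x})$, whose right-hand side is positive for every real $x\neq 0$. For a real $2$-cycle $\{x,y\}$ the sign analysis above forces $x>0>y$ after swapping if needed; setting $u=2x>0$ and $v=-2y>0$, the relations $f_\lambda(x)=y$ and $f_\lambda(y)=x$ become $v(1-e^{-u})=2a$ and $u(e^{v}-1)=2a$, hence
\[
\frac{e^{u}-1}{u}=e^{u}\,\frac{e^{v}-1}{v}.
\]
Since $\tfrac1t\int_0^t e^{s}\,ds$ lies strictly between $1$ and $e^{t}$ for every $t>0$, the left-hand side is strictly less than $e^{u}$ while the right-hand side is strictly greater than $e^{u}$, a contradiction. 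Therefore $g-\mathrm{id}$ is continuous and nowhere zero on the interval $(0,\infty)$, so it has constant sign there.

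It remains to pin down that sign. If $g(x)>x$ for all $x>0$, then $(x_k)$ is increasing; if it were bounded it would converge to a fixed point of $g$, which does not exist, so $x_k\to\infty$. But then $f_\lambda^{2k}(\lambda)=f_\lambda(x_k)\to -a$, and hence $x_{k+1}=f_\lambda(f_\lambda^{2k}(\lambda))\to f_\lambda(-a)=a/(e^{2a}-1)<\infty$, contradicting $x_k\to\infty$. Thus $g(x)<x$ on $(0,\infty)$, so $(x_k)$ is strictly decreasing and bounded below by $0$, hence converges to a fixed point of $g$ in $[0,\infty)$ — which can only be $0$. Therefore $x_k\to 0^+$, and consequently $f_\lambda^{2k}(\lambda)=f_\lambda(x_k)\to-\infty$. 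In other words, the orbit of $\lambda$ accumulates precisely on $\{-\infty,0\}$, the zero virtual cycle of Definition~\ref{defzeroinfcycle}. Since $\lambda$ is the only free singular value of $f_\lambda$, and any attracting cycle of a map in $\mathcal{FP}_2$ must attract $\lambda$, it follows that $f_\lambda$ has no attracting cycle, so it is not pseudo-hyperbolic; hence $\RR^-$ meets no pseudo-hyperbolic component.

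I expect the one genuinely delicate point to be the non-existence of a finite real $2$-cycle (the displayed identity): without it, the monotone sequence $(x_k)$ could a priori converge to a positive limit, i.e.\ to a genuine attracting period-$2$ cycle capturing $\lambda$, and the proposition would be false. The remaining ingredients — the sign bookkeeping for the orbit, the monotonicity of $g$ coming from $f_\lambda'>0$, and the short contradiction ruling out $g(x)>x$ — are routine.
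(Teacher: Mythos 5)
Your proof is correct, and it follows the same overall strategy as the paper (restrict to the real line, show the second iterate pushes every positive point strictly toward $0$, conclude the orbit of $\lambda$ accumulates on the zero virtual cycle $\{-\infty,0\}$), but the key technical step is genuinely different. The paper proves the pointwise inequality $f_\lambda^2(x)<x$ for all $x>0$ \emph{directly}, by applying the elementary inequality $1-e^{-t}<t$ twice: first to get $f_\lambda(x)<\lambda/2x<0$, then, using monotonicity of $f_\lambda$ and the substitution $t=\lambda/x$, to get $f_\lambda^2(x)<\lambda/(1-e^{-\lambda/x})<x$. This one computation simultaneously rules out real $2$-cycles and fixes the sign of $g-\mathrm{id}$, so the paper never needs your intermediate-value argument or the boundedness contradiction. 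Your route instead establishes $g(x)\neq x$ via the nonexistence of a real $2$-cycle — your identity $\tfrac{e^u-1}{u}=e^u\,\tfrac{e^v-1}{v}$ and the observation that $\tfrac{e^t-1}{t}$ lies strictly between $1$ and $e^t$ are correct and rather elegant — and then pins down the sign by excluding escape to $+\infty$. What your version buys is a clean structural dichotomy (no periodic point of period $\le 2$ on $\RR$, hence monotone convergence) that isolates exactly where the danger lies, namely a putative attracting real $2$-cycle capturing $\lambda$; what the paper's version buys is brevity, since a single two-line inequality does all the work. Both correctly finish by noting that $0$ is a pole and $\lambda$ is the only other singular value, so no attracting cycle can exist and $\RR^-$ meets no pseudo-hyperbolic component.
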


\begin{proof}
For any $\lambda<0$ and $x$ real, the function $f_\lambda(x)$ is increasing, $\lim_{t \to 0^+}f_\lambda(t)=-\infty$, $\lim_{t \to 0^-}f_\lambda(t)=+\infty$, $\lim_{t \to -\infty} f_\lambda( t)=0$ and $\lim_{t \to +\infty} f_\lambda( t)=\la$.  To show that $\lambda$ is attracted by the zero cycle, we only need to show $f^2_\lambda(x)<x$ for any $x>0$.

It is easy to check that  for any $t\neq 0$,
 \begin{equation}\label{ref}1-e^{-t}<t . \end{equation}
 This inequality   and $\la <0$,
imply that for any $x>0$,  $$f_\lambda(x)=\frac{\lambda}{1-e^{-2x}}<\frac{\lambda}{2x}<0.$$
 Since $f_\lambda(x)$ is increasing, it follows that
  \begin{equation}\label{mid} f_\lambda(f_\lambda(x))<f_\lambda(\frac{\lambda}{2x})=\frac{\lambda}{1-e^{-\frac{\lambda}{x}}} \end{equation}

Let $t=\lambda/x$, and apply inequality (\ref{ref}) again, to get $$1-e^{-\frac{\lambda}{x}}<\frac{\lambda}{x}.$$  Since $$1-e^{-\frac{\lambda}{x}}<0,$$  it follows that $$\frac{\lambda}{1-e^{-\frac{\lambda}{x}}}<x$$

Together with inequality (\ref{mid}), this implies that $f^2_\lambda(x)<x$ for all $x>0$.
\end{proof}

 In section~\ref{virtcyc} we saw that there are functions in $\mathcal{FP}_2$ for which the asymptotic value $\la$ is a prepole and is thus part of a regular or hybrid virtual cycle.  They are of particular interest in our study of the parameter plane so we give them a name.
\begin{defn} If the asymptotic value $\la$ of  $f_{\la}$ is a prepole, and hence part of a regular or hybrid virtual cycle, we call $\la$ a {\em virtual cycle parameter}.
\end{defn}

\subsection{Pseudo-hyperbolic components}
In the tangent family studied in \cite{KK} or the families studied in \cite{CK1,FK}, the parameter space contains open sets of topologically conjugate hyperbolic maps separated by a bifurcation locus.  In the earlier work,  \cite{FK,KK}, the  hyperbolic components  that are universal covering maps of the punctured disk under the multiplier map are called {\em shell components} because of their shape; and their  properties were studied  in detail.

Since functions in $\calfp_2$ do not have critical points, the multiplier map which maps the multiplier of the attracting cycle of a point  in a pseudo-hyperbolic component into the unit disk cannot take the value $0$.
\begin{defn}
Let $\mathcal H_n$ be the set of all $\lambda$ such that $f_\lambda$ has an attracting cycle of period $n$ and denote any connected subset of $\mathcal H_n$ by $\Omega_n$.   We use the terms \emph{ pseudo-hyperbolic component} or {\em shell component} interchangeably to describe   $\Omega_n$.
\end{defn}

\subsection{The set $\mathcal H_1$}
\label{H1}

Proposition~\ref{posla} implies 
the positive real line $\lambda =x, x>0 $ is contained in   $\mathcal H_1$.

\begin{remark} On the positive real line, the multiplier of the fixed point is between $0$ and $-1$. In particular, the multiplier approaches $-1$ as $\lambda$ approaches $0$.  There is, however,  no period doubling at $0$ at this point because it is a singularity of the parameter space.
\end{remark}

\begin{thm}\label{Omega1}
The set  $\mathcal H_1$ consists of a single connected component $\Omega_1$ which  is an unbounded simply connected open set intersecting  the right half-plane. 
The point $+\infty$ acts as its  virtual center.
\end{thm}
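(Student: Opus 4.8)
The plan is to establish, in order: (i) that $\mathcal H_1$ is nonempty and contains $\RR^+$; (ii) that $\mathcal H_1$ consists of a single component $\Omega_1$; (iii) that $\Omega_1$ is simply connected; (iv) that it is unbounded and meets the right half-plane; and (v) that $+\infty$ is its virtual center. Step (i) is already done: Proposition~\ref{posla} gives $\RR^+ \subset \mathcal H_1$, and the fixed point $x(\la)$ is real, simple, with multiplier strictly between $-1$ and $0$ there, so $\la \in \mathcal H_1$ with $\Omega_1$ the component containing $\RR^+$.

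For step (ii), the key tool is the multiplier map. On any component $\Omega_n$ of $\mathcal H_n$ there is a holomorphic attracting cycle $\mathbf z(\la)$ depending holomorphically on $\la$ (by the implicit function theorem applied to $f_\la^n(z)=z$, since the multiplier is never $1$ on $\mathcal H_n$), and one defines the multiplier function $\rho(\la)=(f_\la)'(z_0(\la))$ for $n=1$, which maps $\Omega_1 \to \DD^*$. The argument that $\mathcal H_1$ is connected is the standard one for shell components: $\rho:\Omega_1 \to \DD^*$ is a holomorphic covering map, as in \cite{FK,KK}. One shows it is proper onto $\DD^*$ (a boundary point of $\Omega_1$ forces $|\rho|\to 1$, since it cannot force $\rho\to 0$ — there are no critical points, by the remark following the pseudo-hyperbolic definition — and $\rho=0$ would be a superattracting fixed point, impossible). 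Since $\DD^*$ has fundamental group $\ZZ$, $\Omega_1$ is either a disk or an annulus covering $\DD^*$. To rule out two distinct components of $\mathcal H_1$, and to pin down which covering occurs, I would use the explicit fixed-point equation: solving $\la = z(1-e^{-2z})$ and $\rho = -2ze^{-2z}/(1-e^{-2z})$, one can parametrize. Actually the cleanest route: the fixed-point relation $e^{-2z} = 1 - \la/z$ together with $\rho = -2z e^{-2z}/(1-e^{-2z}) = -2z(1-\la/z)/(\la/z) = -2z^2(1-\la/z)/\la = -2z(z-\la)/\la$; combined with $e^{-2z}=1-\la/z$ this lets one write $\la$ as a function of $z$ and $\rho$ as a function of $z$, with $z$ ranging over a domain; tracking this shows $\rho:\Omega_1\to\DD^*$ is a universal covering and hence $\Omega_1$ is simply connected, giving step (iii) simultaneously.

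For step (iv), unboundedness and intersection with the right half-plane: Proposition~\ref{posla} already shows $x\to\infty$ along $\RR^+ \subset \Omega_1$, so $\Omega_1$ is unbounded and meets $\{\Re\la>M\}$ for every $M$. For step (v), the virtual center, I invoke the machinery of \cite{FK}: as $\la\to\infty$ in $\Omega_1$, the multiplier $\rho(\la)\to 0$ — along $\RR^+$ this is the computation $-2xe^{-2x}/(1-e^{-2x})\to 0$ in Proposition~\ref{posla}, and by the covering structure the same holds as $\la\to\infty$ in $\Omega_1$ since $+\infty$ is the unique ``missing'' point whose approach is not a virtual cycle parameter — and the asymptotic value $\la$ together with its orbit degenerates, with $f_\la(\gamma_\la(t))$ becoming an asymptotic path for $\la$ in the limit (this is exactly Definition~\ref{stdvc} with $n=0$: the one-point ``cycle'' $\{\infty\}$, i.e. $\la$ itself tending to $\infty$ which is a pole of order $0$ of $f$ in the sense used in the introduction). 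So $+\infty$ is a virtual cycle parameter and, by the limiting-multiplier property, a virtual center.

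The main obstacle I anticipate is step (ii)/(iii): proving that $\mathcal H_1$ has \emph{only one} component and that $\rho$ is a \emph{universal} (not just some) covering of $\DD^*$. Showing $\rho$ is a proper covering is routine shell-component technology, but excluding a second component of $\mathcal H_1$ requires either a global argument via the explicit transcendental equation $\la = z(1-e^{-2z})$ (analyzing for which $\la$ this has a root $z$ with $|{-2ze^{-2z}/(1-e^{-2z})}|<1$, and showing the set of such $\la$ is connected) or a degree/covering-space count showing the universal cover of $\DD^*$ is realized exactly once. I would pursue the explicit-equation route, leveraging that $z\mapsto z(1-e^{-2z})$ is, on a suitable slit region, a biholomorphism onto $\Omega_1$.
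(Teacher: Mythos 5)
You correctly reduce the theorem to the connectivity of $\calh_1$, and you correctly sense that the route goes through the explicit fixed-point equation $\la = z(1-e^{-2z})$ and the multiplier formula $\calm = -2ze^{-2z}/(1-e^{-2z})$; but the connectivity step itself --- which you flag as ``the main obstacle'' --- is the actual content of the theorem, and you do not supply the idea that closes it. The paper's resolution is to change variables entirely: regard $\calm$ as a transcendental function of the fixed point $z$ rather than of $\la$. One checks that $\calm(z)\to 0$ as $\Re z\to+\infty$ and $\calm(z)\to\infty$ as $\Re z\to-\infty$, so $\calm$ has exactly two asymptotic values, $0$ and $\infty$, with asymptotic tracts in the right and left half-planes, and $0$ is an \emph{omitted} value. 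Hence for a small punctured neighborhood $U$ of $0$, $\calm^{-1}(U)$ is a single connected unbounded asymptotic tract, and analytic continuation from it shows $\calm^{-1}(\DD^*)$ is connected. Since $\calh_1=\la(\calm^{-1}(\DD^*))$ is the image of this connected set under the holomorphic (hence continuous) map $z\mapsto z(1-e^{-2z})$, it is connected. No injectivity of $z\mapsto\la(z)$ is needed; your closing suggestion that this map is ``a biholomorphism onto $\Omega_1$ on a suitable slit region'' is both unproven and stronger than what the argument requires.

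Two smaller points. First, your claim that $\rho:\Omega_1\to\DD^*$ is proper is false, and the argument offered for it (``a boundary point forces $|\rho|\to1$ since it cannot force $\rho\to0$'') conflates $\rho$ omitting the value $0$ on $\Omega_1$ with $|\rho|$ being bounded away from $0$ near $\partial\Omega_1$: the virtual center $+\infty$ is precisely a boundary point at which $\rho\to0$, and an infinite-degree covering of $\DD^*$ is never proper. Second, simple connectivity and the universal-covering property do not follow from your sketch; in the paper they come from the general shell-component machinery (Theorem~\ref{omeganprops}, imported from the literature), not from the connectivity argument. Your steps (i), (iv) and (v) do match the paper: $\RR^+\subset\Omega_1$ gives unboundedness and intersection with the right half-plane, and $\la/z=1-e^{-2z}\to1$ together with $\calm\to0$ as $\Re z\to+\infty$ identifies $+\infty$ as the point acting as the virtual center.
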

See the component on the right in figure~\ref{param}.
\begin{proof}
 For any $\lambda$, any fixed point $z=z(\la)$ of the map $f_\lambda$ satisfies the equation $$\frac{\lambda}{1-e^{-2z}}=z$$ and its multiplier is $$\calm(\la)=\calm(z(\la))=\frac{-2\lambda e^{-2z}}{(1-e^{-2z})^2}=\frac{-2z e^{-2z}}{1-e^{-2z}}.$$

It is easy to check that $$\lim_{\Re z\to +\infty} \frac{-2ze^{-2z}}{1-e^{-2z}}=0$$ and $$\lim_{\Re z\to -\infty} \frac{-2ze^{-2z}}{1-e^{-2z}}=\infty.$$

Therefore $\calm(z)$ has two asymptotic values, $0$ and $\infty$,   with  asymptotic tracts in the right and left half-planes respectively.  Moreover, $0$ is an omitted value. Therefore, for a small punctured neighborhood $U$ of $0$, $\mathcal{M}^{-1}(U)$ is an asymptotic tract for $0$ and is thus an unbounded connected set in the right half-plane; extending by analytic continuation shows  that $\mathcal{M}^{-1}(\mathbb{D}^*)$ is connected as well.

Since $\calh_1=\la(\calm^{-1}(\DD^*))$ is the holomorphic image of a connected set,  it consists of a single component $\Omega_1$.

Finally, since $\lambda=z(1-e^{-2z})$ and   $$\frac{\lambda}{z}=1-e^{-2z}\to 1,$$  as $\Re z\to \infty$, the real parts of  both the fixed point and  $\la$   tend to $+\infty$.   In particular,
$\Omega_1$ has a boundary point at $+\infty$ where the limit of $\calm$ is zero, so this acts like a virtual center.
\end{proof}

\subsection{Properties of $\Omega_n$}
The following proposition combines several results proved in  \cite{ABF,FK} adapted to our situation.

\begin{thm}\label{omeganprops}  Let $\Omega_n$ be a component of $\calh_n$. Then
\begin{enumerate}
\item
 The multiplier map $\calm$ that sends a point $\la \in \Omega_n$ to the multiplier of the attracting cycle of $f_{\la}$ is a universal cover of the  punctured disk.  
 \item  The boundary of $\Omega_n$ is analytic along curves where $\calm(\la)=e^{i\theta}$, $\theta \neq 2\pi k$ for any integer $k$ and it has a cusp if  $\theta = 2\pi k$.  The component thus has the appearance of a shell, hence the name. 
 \item
 All the maps in a $\Omega_n$ are quasi-conformally and hence topologically conjugate.
\item\label{dd}
 There is a unique boundary point $\la^*$ of $\Omega_n$ and a path $\la(t) \in \Omega_n$ ending at $\la^*$ such that $\lim_{t \to 1} \calm(\la(t))=0$.   This point is called the {\em virtual center} of $\Omega_n$.   
\end{enumerate}
\end{thm}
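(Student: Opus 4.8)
The plan is to establish each of the four properties of a pseudo-hyperbolic component $\Omega_n$ by adapting the machinery already developed for shell components in \cite{FK} and \cite{ABF}, checking carefully at each step that the only genuinely new feature---the presence of the polar asymptotic value $0$ in the Julia set---does not obstruct the argument. The key structural fact we can rely on is that, since $0$ is a pole, it can never be attracted to the cycle, so the \emph{only} singular value feeding the attracting cycle is the free asymptotic value $\lambda$; this is exactly the hypothesis under which the holomorphic motion / linearization arguments of \cite{FK} are carried out, so those arguments transfer essentially verbatim.

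For (1), I would set up the linearizing coordinate $\phi_\lambda$ near the attracting cycle of $f_\lambda$, extend it along the grand orbit, and observe that because $\lambda$ (and its forward orbit up to the cycle) is the unique singular value in the basin, the multiplier $\calm(\lambda)$ together with a choice of how $\lambda$ sits in the linearized picture gives a local biholomorphism; since $\calm$ omits $0$ (no critical points, so no superattracting cycle, by the remark following the definition of pseudo-hyperbolic) and $\Omega_n$ is by definition connected, monodromy forces $\calm\colon\Omega_n\to\DD^*$ to be a covering, hence the universal cover of $\DD^*$. For (3), quasiconformal conjugacy within $\Omega_n$ follows from the standard Ma\~n\'e--Sad--Sullivan / Bers--Royden holomorphic-motion argument: one moves the grand orbit of the singular value and the Julia set holomorphically over $\Omega_n$ and integrates the resulting Beltrami coefficient; here one must note that the prepoles and the point $0$ move holomorphically too, but that is automatic since $0$ is a persistent pole and the prepoles are its pullbacks. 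For (2), the boundary regularity is a direct consequence of (1): near a point where $\calm=e^{i\theta}$ one pulls back the obvious local model for the boundary of $\DD$ under the covering $\calm$, getting an analytic arc when $e^{i\theta}\neq 1$ and a cusp at $e^{i\theta}=1$ because there the deck transformation of the $\log$ cover has a fixed point upstairs; this is precisely the ``shell'' shape.

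Part (4)---the existence and uniqueness of the virtual center---is the part I expect to require the most care, and it is where the polar asymptotic value actually changes the picture relative to \cite{FK}. Since $\calm\colon\Omega_n\to\DD^*$ is the universal cover, lifting a ray $r e^{i\theta_0}$, $r\to 0$, gives a path $\lambda(t)\in\Omega_n$ along which $\calm(\lambda(t))\to 0$; the content is to show this path lands at a single boundary point $\lambda^*$ of $\Omega_n$ and that $\lambda^*$ is a virtual cycle parameter, i.e.\ $\lambda^*$ is a prepole. The argument follows \cite{FK}: as $\calm\to 0$ the attracting cycle becomes ``infinitely attracting,'' which forces one point of the cycle to escape toward an asymptotic tract, i.e.\ $f^{j}(\lambda(t))$ accumulates on an asymptotic tract of $0$ or of $\lambda$ itself; using the explicit estimates \eqref{basic estimate} and \eqref{supattracting} one identifies the limit with a configuration in which $\lambda^*$ is a prepole, and by the rigidity of the combinatorics (the kneading sequence is constant on $\Omega_n$, Theorem B) this prepole is uniquely determined, giving both landing and uniqueness. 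The subtlety peculiar to $\calfp_2$ is that the escaping cycle point may land on $0$ rather than on $\infty$, i.e.\ the limiting virtual cycle may be hybrid or unipolar rather than regular; one must therefore treat the three cases of Definition~\ref{poleseq} separately and check in each that the limit is a virtual cycle parameter of the predicted length (the first $l$ terms in the hybrid case). Handling this trichotomy cleanly, while showing that in every case exactly one boundary point arises, is the main obstacle; everything else is a faithful transcription of the shell-component theory.
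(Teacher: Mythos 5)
Your proposal takes essentially the same route as the paper, which in fact offers no detailed argument at all: it simply states that the theorem ``combines several results proved in \cite{ABF,FK} adapted to our situation,'' with part (4) identified as Theorem C and handled by the same adaptation. Your sketch --- universal covering via the linearizing coordinate and the uniqueness of the singular value in the basin, quasiconformal conjugacy via holomorphic motions, boundary regularity pulled back from $\partial\DD$, and the virtual center obtained by lifting a ray to $0$ and controlling the escaping cycle point with the estimates (1)--(2) --- is a faithful (and more explicit) transcription of what the paper delegates to those references, and your identification of the regular/hybrid/unipolar trichotomy as the genuinely new point is consistent with how the paper treats it later in Proposition 5.7.
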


Statement~\ref{dd} is Theorem C.
 
  Because of our conventions in defining the address of a pole, there are discontinuities in the address as $\la$ crosses the real axis.  By  propositions~\ref{posla} and~\ref{negescape}, the only component not completely contained in either the upper or lower half plane is the component $\Omega_1$ containing the real line. Therefore as  $\la$ moves in any other component, the prepoles are holomorphic functions of $\la$ and their addresses remain constant.

\begin{prop}\label{knseqsame} All functions in a given component $\Omega_n$ 
 share the same kneading sequence.
 \end{prop}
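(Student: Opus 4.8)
The plan is to show that the kneading sequence is locally constant on $\mathcal H_n$ and then invoke connectedness of $\Omega_n$. Recall from Theorem~\ref{omeganprops} that all maps in $\Omega_n$ are quasiconformally conjugate, so the attracting cycle $\mathbf z(\la) = \{z_0(\la), \ldots, z_{n-1}(\la)\}$ and its immediate basin components $\{A_0(\la), \ldots, A_{n-1}(\la)\}$ move holomorphically (respectively, continuously in the Hausdorff sense) with $\la$; the labeling convention $\la \in A_1$ is preserved under small perturbation since $\la$ depends holomorphically on the parameter and lies in the open set $A_1$. The kneading sequence $S(\la) = *k_1 \cdots k_{n-1}$ is, by Definition~\ref{knseqnew}, read off from the unique set $R_{k_1 \ldots k_n}$ in the L-R structure of length $n$ that is contained in $A_1(\la)$, so it suffices to argue that this index cannot jump.

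First I would fix $\la_0 \in \mathcal H_n$ and a nearby $\la$ in the same half-plane component (by the remark preceding the proposition, every $\Omega_n$ with $n>1$ lies entirely in the open upper or lower half-plane, and $\Omega_1$ is handled separately by Theorem~\ref{Omega1}, where $S = *$ trivially). In that half-plane the poles $p_k = k\pi i$ are fixed, and all prepoles $p_{k_1 \ldots k_m}(\la)$ depend holomorphically on $\la$ with addresses that do not change, by the combinatorial construction in Section~\ref{family}; consequently the regions $L_{k_1 \ldots k_m}(\la)$ and $R_{k_1 \ldots k_m}(\la)$ of the L-R structure vary continuously and keep their indices. Next I would use Proposition~\ref{access}: the boundary points $q_i(\la)$ defined by pulling $+\infty$ backward around the cycle of domains are prepoles (or $0$ or $-\infty$), they vary continuously with $\la$, and each $q_i(\la)$ is a boundary point of $R_{k_i \ldots k_n}(\la) \cap A_i(\la)$. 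A continuously varying prepole of a fixed finite order has a locally constant address; and the alternatives $q_i = 0$ or $q_i = -\infty$ are likewise stable (these occur exactly when the relevant $A_{i-1}$ touches the pole $0$ or is unbounded to the left, an open condition by the structural description in Lemmas~\ref{separating}, \ref{regseq}, \ref{hybseq}). Hence the whole sequence $(q_i(\la))$, and with it the type (regular/unipolar/hybrid) and all the digits $k_i$, is locally constant.

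From local constancy of $S(\la)$ on each half-plane piece of $\mathcal H_n$ — equivalently, on a neighborhood of any point of $\Omega_n$ — and the fact that $S$ takes values in a discrete set, I conclude $S$ is constant on the connected set $\Omega_n$. For $\Omega_1$ there is nothing to prove since $S \equiv *$. The main obstacle I anticipate is the bookkeeping at the jump: one must rule out that as $\la$ moves, the set in the L-R structure of length $n$ sitting inside $A_1(\la)$ gets "relabeled" because a prepole on the relevant boundary collides with a pole and changes order, or because the region $A_1$ crosses from bounded to unbounded. Both are excluded on $\Omega_n$ itself: the cycle stays attracting and quasiconformally conjugate throughout $\Omega_n$, so $\la$ never becomes a prepole there (that only happens at the virtual center on $\partial\Omega_n$, Theorem~\ref{omeganprops}(\ref{dd})), and the quasiconformal conjugacy carries the entire L-R structure, indices and all, from one parameter to another. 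Spelling out that the conjugacy respects the indexing — i.e. that it sends $R_{k_1\ldots k_n}(\la_0)$ to $R_{k_1\ldots k_n}(\la_1)$ and not to some other length-$n$ tract — is the one point that needs care, and it follows from the fact that the conjugacy is a homeomorphism of $\hat{\CC}$ fixing $\infty$, $0$ and $\la$-orbits' combinatorial positions, hence preserving the counterclockwise ordering of tracts at $0$ used in Remark~\ref{RLorder} to define the labels.
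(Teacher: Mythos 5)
Your proposal is correct and follows essentially the same route as the paper: both arguments rest on the facts that $\Omega_n$ ($n>1$) lies in a single half-plane so prepole addresses are unambiguous, that the prepoles (and hence the L-R structure and the distinguished boundary point of $A_1$) depend holomorphically on $\la$, and that a continuously varying, discretely valued index must be constant on a connected set. Your version spells out the local-constancy-plus-connectedness logic and the stability of the $q_i$ more explicitly than the paper does, but the underlying argument is the same.
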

 
 \begin{proof}
Theorem~\ref{omeganprops} says that  pseudo-hyperbolic maps in a given component have the same dynamics.    Suppose first that $n=1$.  If $\la \in \Omega_1$, $f_{\la}$ has a unique fixed point with one completely invariant component and kneading sequence $S=*$. 

Suppose now that $n>1$.  The component $\Omega_n$ is completely contained either the upper or lower half plane.  
The  kneading sequence of the attracting cycle is defined by the index of the set $R_{k_1 \cdots k_n}$ that intersects the component $A_1$ of the cycle.   For all $\la \in \Omega_n$, the sets $A_1$ and $R_{k_1 \cdots k_n}$  intersect and share a common boundary point that is the unique prepole on the boundary of the component $A_1$ that is the preimage of $+\infty$ going backwards around the cycle.     Since the prepoles are holomorphic functions of $\la$,  the boundary of  $A_1$ moves holomorphically as does the set $R_{k_1 \cdots k_n}$. It follows that   the index of the set $R_{k_1 \cdots k_n}$ and hence the kneading sequence remains constant. 
\end{proof}

\begin{defn} The {\em bifurcation locus} $\calb=\calb(\mathcal{FP}_2)$ is the set of parameters that are not structurally stable.
\end{defn}

By the proposition above, $\calb$  is contained in the  complement of $\cup_n \mathcal H_n$.  See \cite{ABF} for a full characterization.  

\subsection{Characterization of shell components}
  It follows from   proposition~\ref{knseqsame} that  the  parameters in a given shell component $\Omega$ have the same kneading sequence.  This sequence determines whether they are   either all regular pseudo-hyperbolic or all  unipolar pseudo-hyperbolic or all  hybrid  pseudo-hyperbolic.    We therefore define
  \begin{defn}
  If $\Omega$ is a shell component containing regular parameters we call it a {\em regular shell component}, (or just a {\em regular component});  if $\Omega$ is a  shell component containing   unipolar parameters we call it a {\em unipolar shell component}, (or just  a {\em  unipolar component}) and if $\Omega$ is a  shell component containing hybrid   parameters we call it a {\em hybrid   shell component}, (or just  a {\em hybrid   component}).
  \end{defn}
  Figure~\ref{param} depicts the parameter plane.\footnote{The colors in parentheses refer to the color version of the paper.  They are graded shades of grey in the black and white version.}  The black and white regions correspond to round-off errors.  The large component on the right (yellow) is $\Omega_1$.   The unbounded components on the left are the unipolar components.   The largest (red) are period $3$  and the next largest, between the period $3$'s are period $5$'s (bright green).   The bounded bulb-like components tangent to $\Omega_1$ are regular components of varying periods: the largest (cyan) are period $2$, the next smaller ones are period $3$ red, and so on.

  Figure~\ref{param1} shows two different blow-ups of areas of figure~\ref{param}.  On the left  we have zoomed in near $0$ and we see period $3$ unipolar components at the top and bottom left.  The unipolar period $5$ components are more visible here, as are the period $7$'s (dark blue) between them and the period $9$'s (orange) between the $7$'s.  On the right    we have zoomed in on the period $2$ component with virtual center at $-\pi i$.   The largest hybrid components of period $4$  (khaki) that share this virtual center are now  visible and there are period $6$ (royal blue) components visible between them.   There are also regular period $4$ and $6$ components budding off of the period $2$ component.

\begin{figure}
  \centering
  \includegraphics[width=3in]{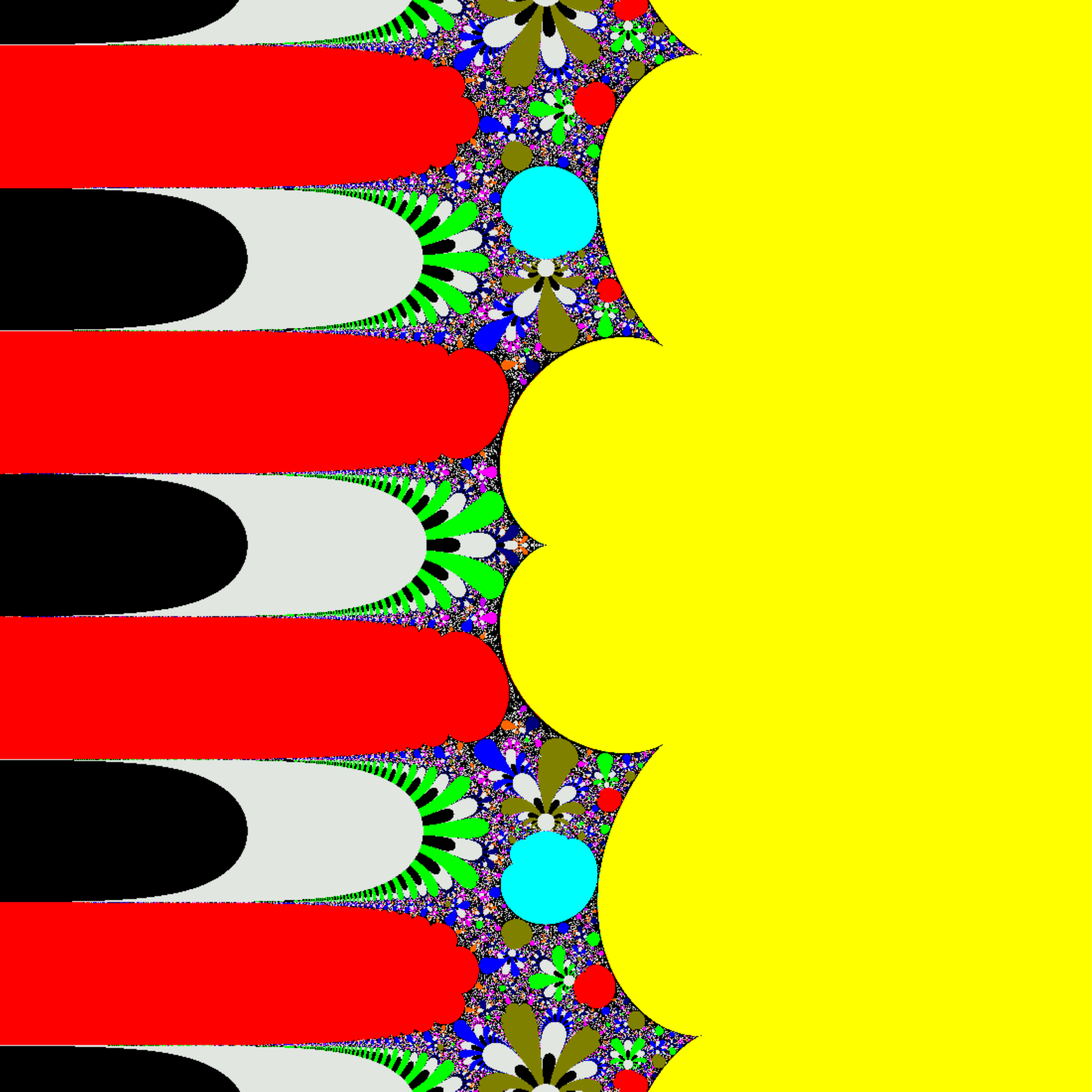}
  \caption{The parameter plane}
  \label{param}
\end{figure}

\begin{figure}[ht]
  \centering
  \begin{subfigure}[b]{0.5\linewidth}
    \centering\includegraphics[width=4.9cm]{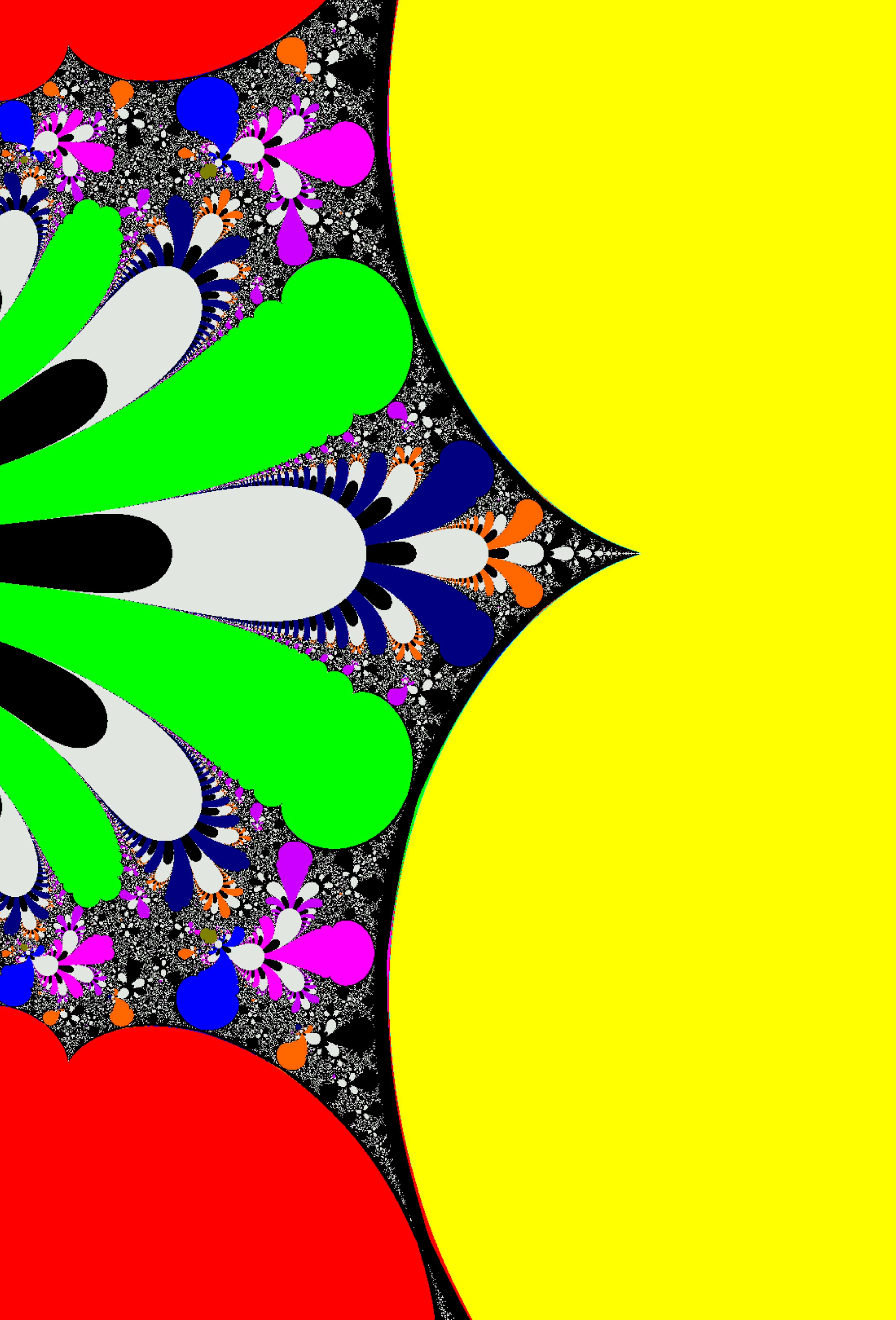}
    \caption{zoom in near $0$ \label{fig:fig1}}
  \end{subfigure}%
  \begin{subfigure}[b]{0.5\linewidth}
    \centering\includegraphics[width=6.5cm]{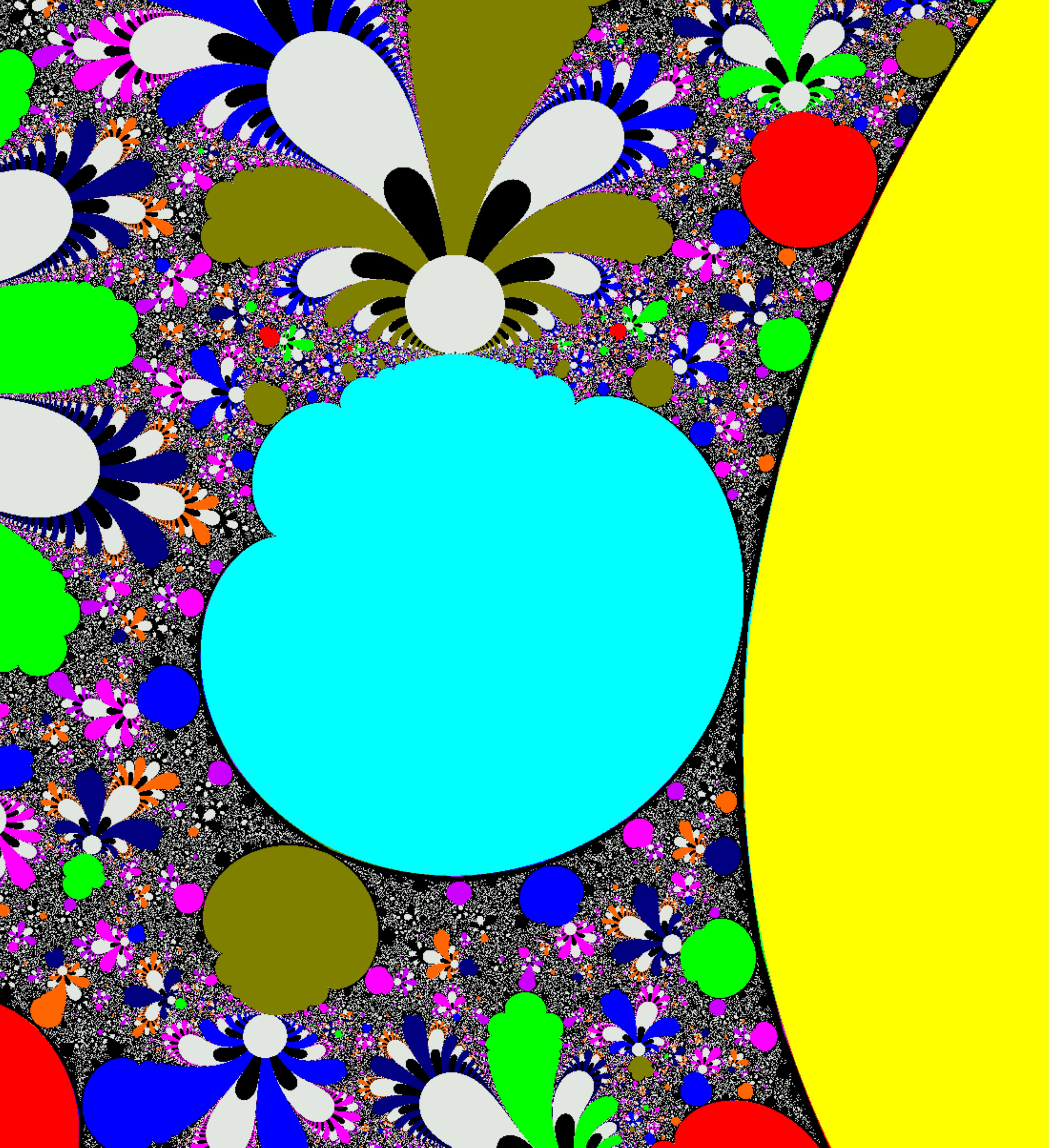}
    \caption{Zoom in near $-\pi i$\label{fig:fig2}}
  \end{subfigure}
  \caption{}
  \label{param1}
\end{figure}

The following proposition except for case (3c) was proved in \cite{FK} as proposition 6.8 and the proof goes through here without modification.
\begin{prop}\label{cycbehav}  Suppose $\Omega$ is a shell component of period $n\geq 1$ and let $\la_k$ be a sequence of points in   $\Omega$ such that $\lim_{k \to \infty} \la_k = \la^* \in \partial \Omega \cap \hat\CC^*$.  Let $\mathbf{z}^k=\{z_0^k, \ldots, z_{n-1}^k\}$ be the periodic cycle of $f_k=f_{\la_k}$ attracting $\la_k$.
\begin{enumerate}
\item If the cycle $\mathbf{z}^k$ stays bounded as $k \to \infty$, then $f_{\la^*}$ has an indifferent periodic cycle $\mathbf{z}^*$ whose period divides $n$ and $\mathbf{z}^k \to \mathbf{z}^*$.
\item If $n=1$, and $|z_0^k| \to \infty$ as $k \to \infty$, then $\Re z_0^k \to +\infty$, $\la_k \to +\infty$ and $\calm(\mathbf{z}^k) \to 0$.
\item  If $n >1$ and  $|z_j^k| \to \infty$ as $k \to \infty$ for some $0 \leq j  \leq n$,  then
\begin{enumerate}
\item If $\Omega$ is a regular component,   $f_{\la^*}^{j-1}(\la^*)$ is a non-zero pole and $j=n-1$.
 \item  If $\Omega$ is a hybrid component,  $j$ is the smallest index such that $f_{\la^*}^{j-1}(\la^*)= k_{l} \pi i$ where   $n=j+2m$ and $k_{l} \neq 0$  is  defined in  definition~\ref{knseqnew}; thus,  $k_l=k_{n-j}$.
 \item If $\Omega$ is a unipolar component, $\Re \la \rightarrow -\infty$,  $n=2m+1$, $j$ is odd and  the limit cycle is $\{+\infty, -\infty, 0, -\infty, \ldots, -\infty, 0\}$.
\end{enumerate}
\end{enumerate}
\end{prop}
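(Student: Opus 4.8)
The plan is to analyze the three cases separately, in each case combining the boundary behavior results already invoked from \cite{FK} (Theorem~\ref{omeganprops} and the bounded-cycle alternative of Proposition~\ref{cycbehav}(1)) with the explicit combinatorial structure of the $A_j$ established in Lemmas~\ref{separating}, \ref{regseq} and \ref{hybseq}. Case (3a) is already proved in \cite{FK}, so the real content is cases (3b) and (3c), and to a lesser extent reconciling (3a) with our setup. Throughout, since $n>1$, the component $\Omega$ lies entirely in one open half-plane (by Propositions~\ref{posla} and~\ref{negescape}), so the prepoles and the boundaries of the $A_j$ move holomorphically with $\la$, and the kneading sequence is constant on $\Omega$ by Proposition~\ref{knseqsame}. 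The starting observation is that if $|z_j^k|\to\infty$ for some $j$, then by Proposition~\ref{cycbehav}(1) the cycle cannot stay bounded, and one then wants to identify which coordinates escape and in which direction.

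\textbf{Case (3c), unipolar components.} Here the kneading sequence has the form $*k_10\cdots k_{n-2}0$ with $n=2m+1$ odd, and by Lemma~\ref{separating} the component $A_{n-1}$ always has $0$ on its boundary, while the odd-indexed components $A_j$ are unbounded to the left (intersecting an asymptotic tract of $0$) and the even-indexed ones have $0$ on their boundary. First I would show that as $\la_k\to\la^*$ with some coordinate escaping, in fact \emph{all} coordinates must escape: using the estimate \eqref{basic estimate}, once one coordinate $z_j^k$ has very negative real part, $f(z_j^k)\sim-\la/e^{-2z_j^k}$ forces the next coordinate to have very large real part in absolute value, and iterating around the cycle (which closes up since $n$ is finite) propagates escape to every coordinate. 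Then I would pin down the signs: the coordinate $z_0^k\in R$ (by Remark~\ref{lainR}) must have $\Re z_0^k\to+\infty$, and since $\la_k=f^{?}$-image around the cycle, tracking $f(z)=\la/(1-e^{-2z})$ shows $\Re\la_k\to-\infty$ (the functional relation $\la_k = z_0^k(1-e^{-2z_0^k})$ when we read $z_0^k$ appropriately, or more directly the fact that $z_1^k=\la_k$ must lie in the unbounded-left component $A_1$). The alternation of limits $\{+\infty,-\infty,0,-\infty,\ldots,-\infty,0\}$ then follows from the parity structure of Lemma~\ref{separating} together with \eqref{basic estimate}: the $+\infty$ sits at the $R$-slot, each $-\infty$ pulls forward to $0$, and each $0$ pulls forward (as an asymptotic value of $f$) to $-\infty$, consistent with $n$ odd. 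The multiplier going to zero is already contained in Proposition~\ref{zeroinfcycle} (the virtual multiplier computation), once one knows the escaping cycle is asymptotic to the zero virtual cycle.

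\textbf{Case (3b), hybrid components.} Here the kneading sequence is $*k_1\cdots k_lk_{l+1}0\cdots k_{n-2}0$ with $n=j+2m$ where $j$ is the first index with $f_{\la^*}^{j-1}(\la^*)$ landing on a pole; by Lemma~\ref{hybseq}, $A_1,\ldots,A_K$ are bounded and the remaining components alternate between being unbounded and having $0$ on their boundary. The argument mirrors the regular case of \cite{FK} for the first stretch: the bounded components $A_1,\ldots,A_{j-1}$ cannot have escaping cycle coordinates (a bounded simply connected domain whose boundary prepoles move holomorphically keeps its periodic point bounded), so the first coordinate that can escape is $z_{j-1}^k$ sitting in the unbounded component, and its limit must be a pole — which pole is dictated by the kneading sequence, namely $k_l\pi i$ with $l=n-j$. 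From there the tail behaves exactly as in the unipolar case, alternating $-\infty, 0, \ldots$, forcing $n-j=2m$ even, i.e. $n=j+2m$. The identification $j=$ smallest escaping index, and $j=n-1$ in the regular subcase, is then forced by the combinatorics.

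\textbf{Main obstacle.} The genuinely delicate point is \emph{case (3b)}: showing that the escape propagates forward around the cycle in exactly the prescribed alternating pattern and no earlier — i.e., that the bounded components really do trap their periodic points even in the limit, and that once escape begins it must hit a pole (not merely go to $\infty$ in some other way) at precisely the index named by the kneading sequence. This requires combining the holomorphic motion of the component boundaries with the universal-cover structure $f:A_0\to A_1\setminus\{\la\}$ and the one-to-one structure on the other links, plus a careful bookkeeping of the L-R indices to match "smallest index $j$ with $f^{j-1}(\la^*)=k_l\pi i$" against "$k_l=k_{n-j}$". The uniformity needed to pass the estimates \eqref{basic estimate} to the limit along $\la_k\to\la^*$ — making sure the $\sim$ signs become equalities in the limit, uniformly in the relevant coordinate — is the technical heart, but it is of the same flavor as the estimates already carried out in the proof of Proposition~\ref{zeroinfcycle}, so I expect it to go through; I would lean on \cite{FK} Proposition~6.8 as the template and only write out the new alternating-tail bookkeeping in detail.
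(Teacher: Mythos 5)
Your treatment of (3c) is essentially the paper's own: the paper likewise writes out only case (3c), deferring (1), (2), (3a) and (3b) to \cite{FK}, Proposition 6.8, and its argument is exactly the propagation you describe --- each unbounded $A_j$ is unbounded only to the left (its $\pi i$-translates being disjoint preimages of $f(A_j)$), so $|z_j^k|\to\infty$ forces $\Re z_j^k\to-\infty$ and hence $z_{j+1}^k\to 0$, while going backwards $z_{j-1}^k$ tends to a pole that must be $0$ (else the component is not unipolar), and so on around the cycle until $\Re z_0^k\to+\infty$. Two local slips should be fixed: first, when $\Re z_j^k\to-\infty$ the estimate $f(z_j^k)\sim-\la e^{2z_j^k}$ sends the \emph{next} coordinate to $0$, not to a point of large modulus --- it is the coordinate after that which blows up again, via $f(z)\sim\la/(2z)$ near the pole $0$ --- so your sentence ``forces the next coordinate to have very large real part in absolute value'' has the propagation off by one step; second, $z_1^k\neq\la_k$ (the asymptotic value lies in $A_1$ but is not a periodic point), so $\Re\la_k\to-\infty$ must be deduced from $A_1$ being confined to a horizontal strip unbounded only to the left, not from identifying $\la_k$ with a cycle coordinate. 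Neither slip is structural, and your extra sketch of (3b) is consistent with the combinatorics the paper relies on.
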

In cases (3a) and (3b)   $\la^*$
 is a virtual cycle parameter as well as a virtual center.   Since $\la^*$ is a prepole for $f_{\la^*}$, it has an address.  In case (3a) its address is given by its kneading  sequence and in case (3b) its address is given by the first part of the sequence, $k_1 \ldots k_l$.  
In case (3c), the llimit $\la^*$ is  $-\infty$.  As $\la$ tends to $\infty$ in the unipolar component,  the immediate basin of $f_{\la}$ contains a cycle of paths with endpoints on the boundary.   All the odd components but the last   contain a path in the left half-plane tending to $-\infty$, all the even components  contain a path ending at $0$ and $A_0$ contains a path in the right half-plane whose endpoint is $+\infty$.

\begin{proof}[Proof of (3c)]  By definition,  if $\la$ is in a unipolar component it has kneading sequence $*k_10\cdots 0k_{n-2 }0$.   In the proof of lemma~\ref{separating} we saw that  $A_j^k$ is unbounded in the left half-plane for all odd $j$.  For all $m$, the sets $\{A_j^k + m \pi i \}$   are preimages of $f(A_j^k)$.  This periodicity implies  that  if $|z_j^k| \to \infty$, $\Re z_j^k \to -\infty$ and $z_{j+1}^k \to 0$. Moreover, $z_{j-1}^k$ tends to a pole.   If the pole is not $0$, $\la$ is not in a unipolar component so  the pole is zero and $|z_{j-2}^k| \to \infty$.  Continuing to pull back around the cycle we get   $\Re z_1^k \to -\infty$ and  $\Re z_0^k \to +\infty$.
\end{proof}

In the next section we prove that the converse holds:  we  construct shell components at each virtual cycle parameter and these are parameters are their virtual centers.

  \subsection{No virtual center at $0$}
 \label{zeronotvc}

By  theorem C, the virtual center of each   component in $\mathcal{H}_2$ is a virtual cycle parameter $\la^*$; since the period is $2$,  the   $\la^*$ is also a  pole  so $\la^*=\pi i k$ for some $k \in \ZZ$.  That is, it is a solution to $f_\lambda(\lambda)=\frac{\lambda}{1-e^{-2\lambda}}=\infty$.  
 Note that $\lim_{\lambda
 \to 0}\frac{\lambda}{1-e^{-2\lambda}}=1/2$, so that zero is not the virtual center of a period $2$ component.  Since $f_{\la}$ is not defined, zero is not a point of the pararmeter space. 
   In fact, we have more; we can now prove lemma~\ref{evenper} stated in section~\ref{p-h maps}.
 
 \begin{starlemma}{\rm{\bf ~\ref{evenper}}}
 The value $0$  cannot be the virtual center of a pseudo-hyperbolic component of any period. That is, there is no pseudo-hyperbolic $f_{\la}$ whose kneading sequence has either the form
    $*0$ or the form $*0k_20\cdots k_{n-2}0$.
        \end{starlemma}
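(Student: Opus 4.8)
The plan is to argue by contradiction, ruling out such a kneading sequence by showing it cannot be of any of the three types --- regular, unipolar, or hybrid --- into which Definition~\ref{poleseq} sorts the $q$-sequences of a pseudo-hyperbolic map. Suppose $f_{\la_0}$ is pseudo-hyperbolic with an attracting cycle of even period $n=2m$ and kneading sequence $S=*0k_2 0\cdots k_{n-2}0$ (the case $n=2$ being $S=*0$). Let $\Omega$ be the shell component of $\la_0$; by Proposition~\ref{knseqsame} it suffices to rule out one such $\la_0$.

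The first step is to transcribe $S$ into the sequence of distinguished boundary points $q_0,\dots,q_{n-1}$ of the cycle of immediate-basin components $A_0,\dots,A_{n-1}$, with $R\subset A_0$ and $\la\in A_1$. Since $q_{n-1}$ is always the pole named by the last digit, $q_{n-1}=k_{n-1}\pi i=0$; because $0$ is an omitted asymptotic value whose tract is the left half-plane, pulling back once gives $q_{n-2}=-\infty$, and then $q_{n-3}$ is the pole named by $k_{n-3}$. Using that every odd-indexed digit of $S$ vanishes and that $f(-\infty)=0$, $f(0)=\infty$, an induction backward around the cycle produces $q_i=0$ for every odd $i$ and $q_i=-\infty$ for every even $i$; in particular $q_1=0$ since $n$ is even. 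Now compare with Definition~\ref{poleseq}. The sequence cannot be regular: a regular $q$-sequence consists entirely of poles, while here $q_2=-\infty$ --- equivalently, Lemma~\ref{regseq} forces every digit of a regular sequence to be nonzero, contradicting $k_1=0$. It cannot be unipolar, since unipolar sequences occur only for odd $n$ (this is the parity forced in Lemma~\ref{separating}), whereas $n=2m$. And it cannot be hybrid: in a hybrid $q$-sequence the pattern $0,-\infty,0,-\infty,\dots$ coming down from $q_{n-1}$ breaks off at a distinguished \emph{nonzero} pole, after which the remaining $q_i$ (including $q_1$) are finite prepoles (Definition~\ref{poleseq}, case~3, and Lemma~\ref{hybseq}), whereas here the alternation runs all the way to $q_1=0$, leaving no slot for such a pole. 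Since these three possibilities are exhaustive for $n>1$, this is the required contradiction; for $n=2$ the same comparison applies directly to $S=*0$.

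A second, and for $n=2$ especially transparent, route is through the virtual center. By Theorem~C (statement~\ref{dd} of Theorem~\ref{omeganprops}) the component $\Omega$ has a unique virtual center $\la^*\in\partial\Omega\cap\hat\CC^*$, which by Proposition~\ref{cycbehav} is a virtual cycle parameter; in particular $\la^*\neq 0$ since $0$ is not even a point of the parameter space. Pulling back once around the cycle from the boundary point $+\infty$ of $A_0$ identifies $\la^*$ with the distinguished boundary point $q_1$ of $A_1$ --- this is precisely what the three subcases of Proposition~\ref{cycbehav}(3) say. Hence $\la^*=q_1=0$, contradicting $\la^*\in\hat\CC^*$; and indeed, as recorded just before the statement, $\lim_{\la\to 0}\la/(1-e^{-2\la})=1/2\neq\infty$, so $0$ cannot be a virtual cycle parameter of any period and so cannot be a virtual center.

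I expect the main obstacle to be the bookkeeping in the first step: one must pin down the dictionary between a digit $0$ in position $i$ of $S$ --- which by Definition~\ref{knseqnew} is an index of the $L$--$R$ structure, recording whether the $i$-th pull-back used the near-origin branch (meeting $L_0$) or a branch at some pole --- and the value $q_i\in\{0,-\infty\}$, together with the correct alternation forced by $f(-\infty)=0$ and $f(0)=\infty$. In particular one must verify that a \emph{leading} $0$ yields $q_1=0$ rather than $q_1=-\infty$; this relies on the orientation conventions fixed in the construction of the $L$--$R$ structure and on Remark~\ref{RLorder}. Once that correspondence is established, the parity count together with Lemmas~\ref{separating}, \ref{regseq}, and \ref{hybseq} (or, equivalently, Theorem~C and the fact that $0\notin\hat\CC^*$) completes the argument.
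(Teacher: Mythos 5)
Both of your arguments have genuine gaps, and neither reproduces the computation that the paper's proof actually rests on. Your first argument is circular: you rule out the sequence $*0k_20\cdots k_{n-2}0$ by checking it against the three cases of Definition~\ref{poleseq} and declaring those cases exhaustive, but the exhaustiveness of that trichotomy for pseudo-hyperbolic maps is precisely the content of Theorem~\ref{thmB}, and Lemma~\ref{evenper} is one of the ingredients needed to establish it (the paper states the lemma in Section~\ref{p-h maps} but explicitly defers its proof to Section~\ref{zeronotvc} because the dynamical arguments there do not cover it). The configuration you need to exclude --- $n$ even with the alternation $q_{n-1}=0,\ q_{n-2}=-\infty,\dots$ running all the way down to $q_1=0$ --- is not forbidden by Definition~\ref{poleseq} or by Lemmas~\ref{separating}, \ref{regseq}, \ref{hybseq} (note that Lemma~\ref{separating} assumes $0\in\partial A_0$, which is not the hypothesis here); it is exactly the fourth logical possibility that this lemma must kill.

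Your second argument has the right skeleton, which is also the paper's: reduce to showing that $0$ cannot be a virtual cycle parameter, i.e.\ that $\lim_{\la\to 0}f_\la^n(\la)\neq\infty$ for every $n$. But you only verify this for $n=1$ and then assert ``so $0$ cannot be a virtual cycle parameter of any period,'' which does not follow. Writing $w_n(\la)=f_\la^n(\la)$, one has $w_1\to 1/2$ and $w_2\to 0$, so already $w_3=\la/(1-e^{-2w_2})$ is a $0/0$ indeterminate form, and the same happens at every subsequent odd step; a priori these limits could be infinite. The paper closes this by an induction showing $\lim_{\la\to 0}w_{2i}(\la)=0$ and $\lim_{\la\to 0}w_{2i+1}(\la)=a_i\in\CC^*$, where $a_0=1/2$ and $a_{i+1}=1/\bigl(2(1-e^{-2a_i})\bigr)$; that computation is the substance of the proof and is missing from your proposal. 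Finally, ``$\la^*\neq 0$ since $0$ is not a point of the parameter space'' is not a valid step: $+\infty$ is not a point of the parameter space either, yet it acts as the virtual center of $\Omega_1$ and of the unipolar components, so being a puncture does not by itself prevent $0$ from acting as a virtual center in the limiting sense --- ruling that out is exactly what the induction above accomplishes.
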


\begin{proof} Given $f_\lambda$, let $w_n(\lambda)=f^n_{\la}(\lambda)$.  Because every virtual center is also  a virtual center parameter,  to prove this lemma  it suffices to show that $\lim_{\lambda\to 0} w_n(\lambda)\neq \infty$.  This follows from the following claim 
 which we prove by induction:\\
  For any $i\geq 0$, define the sequence   $a_i\in \mathbb{C^*}$  by
 $$a_0=\frac{1}{2}, \text{ and } a_{i+1}=\frac{1}{2(1-e^{-2a_i})}.$$
Then 
$$\lim_{\lambda\to 0} w_{2i}(\lambda)=0, \text{ and } \lim_{\lambda\to 0} w_{2i+1}(\lambda)=a_i.$$

Suppose $i=0$.  Then 
$$w_0(\lambda)=\lambda \mbox{ and } w_1(\lambda)=\frac{\lambda}{1-e^{-2\lambda}},$$
so  that $$\lim_{\lambda\to 0} w_{0}(\lambda)=0, \text{ and } \lim_{\lambda\to 0} w_{1}(\lambda)=1/2,$$  and the claim is true for $i=0$. 

Next, we assume that the claim is true for $i$, and  show that it is also true for $i+1$. 

By the induction hypothesis,  $w_{2i+1}(\lambda)\to a_i\in\mathbb{C}^*$,  so that   $$w_{2i+2}(\lambda)=\frac{\lambda}{1-e^{-2w_{2i+1}}}\simeq \frac{\lambda}{1-e^{-2a_i}}\to 0$$ as $\lambda\to 0$. 
Therefore as $\lambda\to 0$, $$w_{2i+3}(\lambda)=\frac{\lambda}{1-e^{-2w_{2i+2}}}\to \frac{1}{2(1-e^{-2a_i})}:=a_{i+1}\in \mathbb{C}^*,$$ 
completing the  proof of claim and hence also the proof of  the lemma.
\end{proof}

An immediate corollary is
\begin{cor}
The kneading sequence of every  pseudo-hyperbolic map is an  allowable sequence.
\end{cor}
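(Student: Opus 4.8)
The plan is to obtain the corollary by quoting Theorem~\ref{thmB} together with the structural lemmas, and checking that their conclusions match Definition~\ref{defn9} of an allowable sequence. If $f_\lambda$ is pseudo-hyperbolic with attracting cycle of period $n$, then for $n=1$ its kneading sequence is $S=*$, which is (vacuously) of regular type; and for $n>1$, Theorem~\ref{thmB} gives a unique kneading sequence $S=*k_1\cdots k_{n-1}$ of exactly one of the three types---regular, unipolar, or hybrid---corresponding to the trichotomy for the boundary points $q_i$ in Definition~\ref{poleseq}. Since Definition~\ref{defn9} declares a sequence allowable precisely when it arises from a unipolar, regular, or hybrid sequence of $q_i$'s, Theorem~\ref{thmB} already places $S$ on the allowable list, provided the two exceptional patterns singled out in Definition~\ref{defn9} are ruled out.

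So the only thing left to verify is that, when $n>1$, $S$ is not the all-zero sequence and, when $n$ is even, $S$ is not of the form $*0k_20\cdots 0k_{n-1}0$. I would dispose of the first point by cases: in the regular case Lemma~\ref{regseq} gives $k_i\neq 0$ for every $i$; in the unipolar and hybrid cases $0\in\partial A_0$, and Lemma~\ref{separating} shows $S=*k_10\cdots 0k_{n-2}0$ with some odd-indexed entry nonzero. In either situation $S$ has a nonzero entry, hence is not the all-zero sequence. The second point is exactly Lemma~\ref{evenper}, just established. Combining these, $S$ meets all the requirements of Definition~\ref{defn9} and is therefore allowable.

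I do not expect a genuine obstacle here: essentially nothing is proved beyond assembling Theorem~\ref{thmB}, Lemmas~\ref{separating}, \ref{regseq}, \ref{evenper}, and Definition~\ref{defn9}. The only care needed is organizational---confirming that the trichotomy of Theorem~\ref{thmB} is literally the one used to define ``allowable'', and that the two exclusions in Definition~\ref{defn9} are covered, one by Lemmas~\ref{separating}/\ref{regseq} (no all-zero sequence) and one by Lemma~\ref{evenper} (no $*0k_20\cdots 0k_{n-1}0$ when $n$ is even).
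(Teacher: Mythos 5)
Your proof is correct and follows exactly the paper's (implicit) argument: the paper states this corollary as "immediate" after proving Lemma~\ref{evenper}, the point being precisely that Theorem~\ref{thmB} places the sequence in one of the three types while Lemmas~\ref{separating}, \ref{regseq}, and \ref{evenper} rule out the two exceptional patterns excluded in Definition~\ref{defn9}. Your assembly of these ingredients is the intended proof.
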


In the next section  we will show that for any integer $k$, $k\neq 0$, there is a regular shell component whose  kneading sequence is $*k$.

\subsection{The set $\mathcal H_n$}
If $\Omega_n$ is a component of $\calh_n$,  proposition~\ref{knseqsame} says that the kneading sequence of every function in the component is the same.  Therefore, we may identify the kneading sequence of any one of these cycles with the component and set $S(\Omega)=*k_1\cdots k_{n-1}$.

The virtual center of $\Omega_n$ is assigned the same sequence.  Since we have infinitely many components meeting at a given virtual cycle parameter, this parameter corresponds to infinitely many sequences.  They all have a common initial segment consisting of non-zero entries and corresponding to the regular component whose virtual center is that virtual cycle parameter.

\section{Existence of pseudo-hyperbolic components}
\label{existence}

In this section we give a full description of how the   pseudo-hyperbolic components in the parameter plane are situated.   In particular, we prove that every virtual cycle parameter is the virtual center of one regular and infinitely many hybrid pseudo-hyperbolic components.  We also show that there are components corresponding to all allowable kneading sequences.

\begin{thm}[Theorem D]
\label{Main Theorem A}
 Given any allowable kneading sequence
$S=*k_1k_2 \ldots k_{n-1}$ with  $n>1$:
\begin{enumerate}

 \item If $S$ is a unipolar sequence,  $S=*k_10\cdots 0k_{n-2 }0$,  then $n$ is odd, and  there is a unipolar pseudo-hyperbolic component associated to it.  This component is unbounded in the left half-plane and, as $\Re{\la} \to -\infty$ in the component, the multiplier of the attracting cycle goes to $0$.
 \item If $S$ is a regular sequence, $S=*k_1\cdots k_{n-1}$, where $k_{n-1}\neq 0$, then
 there is a regular component associated to it, and its virtual center is the virtual cycle parameter with the same address.
 \item
 If $S$ is a hybrid sequence,  $S=*k_1\cdots k_{l-1}k_lk_{l+1}0\cdots k_{n-2}0$,  then the first part of sequence, $k_1\cdots k_{l},$ is the address of a regular virtual cycle parameter  and there is a hybrid component whose virtual center is this virtual cycle parameter and it  has kneading sequence $S$.
 \end{enumerate}
 \end{thm}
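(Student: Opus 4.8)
The plan is to treat all three parts by the same device, the one used in \cite{FK,ABF}: from the combinatorial data $S$ one writes down an explicit finite composition of inverse branches of $f_\la$ whose attracting fixed point \emph{is} the sought cycle, proves this composition is a uniform contraction on a domain tailored to $S$, and follows its derivative as the parameter is pushed to a degenerate value. The regular case is the prototype. Let $\las$ be the virtual cycle parameter with address $k_1\cdots k_{n-1}$, so that $\las$ is a prepole of order $n-1$ of $f_{\las}$ and $\{\las,f_{\las}(\las),\dots,\infty\}$ is a regular virtual cycle. For $\la$ near $\las$ the prepoles vary holomorphically (Section~\ref{paramsection}), hence the $n$ inverse branches $g_1^\la,\dots,g_n^\la$ of $f_\la$ singled out by the L--R structure with index $k_1\cdots k_n$ --- equivalently by $S$ --- are well defined and holomorphic in $\la$. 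Put $\Phi_\la=g_1^\la\circ\cdots\circ g_n^\la$. First I would produce a topological disk $W$, consisting of a neighborhood of $+\infty$ inside the asymptotic tract $R$, with $\Phi_\la(\overline W)\subset W$ for all $\la$ in a (possibly smaller) neighborhood of $\las$; then the Schwarz--Pick inequality for the hyperbolic metric of $W$ makes $\Phi_\la$ a uniform contraction, so it has a unique fixed point $z_0(\la)$, holomorphic in $\la$, and $\{z_0(\la),g_n^\la(z_0(\la)),\dots\}$ is an attracting cycle of period $n$ whose kneading sequence is $S$ by Definition~\ref{knseqnew}. By Proposition~\ref{knseqsame} all such $\la$ lie in one component $\Omega_n$ of $\calh_n$. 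Finally, $\calm(\la)=(\Phi_\la)'(z_0(\la))\to 0$ as $\la\to\las$: one inverse branch degenerates as its domain is pushed to $\infty$, and the resulting estimate is exactly the virtual multiplier computation of Proposition~\ref{zeroinfcycle}. Hence $\las\in\partial\Omega_n$ and, by Theorem~\ref{omeganprops} (statement~\ref{dd}), $\las$ is the virtual center of $\Omega_n$; it is a virtual cycle parameter by construction.

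For the hybrid case, $\las$ is now the regular virtual cycle parameter with address $k_1\cdots k_l$, a prepole of order $l$. The idea is to splice, onto the inverse branches of the regular model for this $\las$, the extra branches that run through the zero virtual cycle $\{-\infty,0\}$ a total of $m$ times; these branches exist precisely because $L$ and $R$ are adjacent at $\infty$, so a neighborhood of $+\infty$ has an inverse image meeting $R_{00}$, which in turn has inverse images inside the regions $L_{\cdots 0}$ --- this is the mechanism of Proposition~\ref{0per2} and of the ``merging'' phenomenon illustrated by $\tfrac{\pi i}{2}\tan z$. For the value of $m$ prescribed by $S$ this yields a composition $\Psi_\la$ of $n$ inverse branches, and repeating the four steps above --- the degenerating branch now being the one at the non-zero pole $k_l\pi i$, as in Proposition~\ref{cycbehav} --- produces an attracting cycle of period $n$ with kneading sequence $S$, hence a hybrid component $\Omega_n$ with virtual center $\las$. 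A short separate argument, comparing the regular and the bouncing portions of the virtual cycle at $\las$, then shows this component is distinct from the regular component of part~(2) and from the hybrid components obtained with other values of $m$.

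The unipolar case is different because there is no finite virtual cycle parameter to expand around; instead I would work directly in the region $\{\,\Re\la\ll 0\,\}$. Using the estimates $f_\la(z)\sim -\la e^{2z}$ and $f_\la^2(z)\sim -e^{-2z}/2$ for $\Re z\ll 0$ from (\ref{basic estimate}), one sets up the return map $\Phi_\la=g_1^\la\circ\cdots\circ g_n^\la$ along the unipolar cycle --- inverse branches that alternate through neighborhoods of $\infty$ and of $0$, with the non-zero-pole branch at $k_1\pi i$ and the rest as dictated by $S$ --- and shows that for $\la$ in a suitable unbounded region $U\subset\{\,\Re\la<-M\,\}$ this $\Phi_\la$ carries a neighborhood of $+\infty$ in $R$ strictly into itself, is a uniform contraction there, and has $(\Phi_\la)'\to 0$ as $\Re\la\to-\infty$ (again by the estimate behind Proposition~\ref{zeroinfcycle}). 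Its fixed point is an attracting cycle of period $n$ with kneading sequence $S$, so the $\la$'s obtained lie in a unipolar component $\Omega_n$ which, by construction, is unbounded to the left and has multiplier tending to $0$ there.

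I expect the real work in every case to be the two steps ``$\Phi_\la(\overline W)\subset W$'' and ``$\Phi_\la$ is a uniform contraction'': one must construct a domain $W$ adapted to $S$ that is mapped strictly inside itself, and do so \emph{uniformly} as the parameter approaches its degenerate value. The difficulty is concentrated in the inverse branches that involve neighborhoods of $0$ and of $\infty$, where $f_\la$ is a universal cover rather than a local homeomorphism; controlling both the image of $W$ and the derivative bound there requires the explicit estimates (\ref{basic estimate}) together with Schwarz--Pick. Once those steps are in place, identifying the kneading sequence, concluding that the parameters land in a single component, and pinning down the virtual center are immediate from Definition~\ref{knseqnew}, Proposition~\ref{knseqsame}, Theorem~\ref{omeganprops}, and Proposition~\ref{zeroinfcycle}.
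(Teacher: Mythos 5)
Your outline is correct and is in substance the paper's own argument, just packaged backwards. The paper works forwards: it chooses $\la$ so that the iterated exponentials $E^i(-2\la)$ land in explicit half-strips $\calr^{even}_{k,M,\epsilon}$, $\calr^{odd}_{k,M,\epsilon}$ prescribed by $S$, which forces the orbit $w_0=\la, w_1,\dots$ to alternate between a neighborhood of $0$ and the far left half-plane before landing far to the right; it then takes $U_{2m}=\{\Re z>\Re w_{2m}-1\}$, pulls back along the cycle to $U_0\ni\la$, bounds $\mathrm{dist}(\la,\partial U_0)$ from below by the Koebe $1/4$ theorem applied to the inverse $h_n$ of $f_\la^{2m}\colon U_0\to U_{2m}$, bounds the diameter of $f_\la(U_{2m})$ about $\la$ from above, and concludes $f_\la^{n}(U_0)\subsetneq U_0$, hence an attracting cycle by Schwarz. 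Your $\Phi_\la$ is exactly this $h_n$ (composed with one more branch), your domain $W$ is $U_0$, and your Schwarz--Pick contraction is the paper's Koebe-plus-Schwarz step, so the two proofs buy the same thing; the regular and hybrid cases are likewise handled by expanding around the finite virtual cycle parameter exactly as you describe (the paper in fact leaves the regular case to the reader as "similar"). The one step your sketch leaves implicit and the paper does not is the \emph{nonemptiness} of the parameter region: for a unipolar sequence one must check that the infinitely many conditions $E^i(-2\la)\in I\cap\calr^{even}_{k_{2i+1},M,\epsilon}$ are simultaneously satisfiable, which the paper establishes by showing $E(\calr^{even}_{k,M,\epsilon})$ is a sector $I$ at infinity meeting every strip; since there is no finite virtual cycle parameter to anchor the unipolar construction, this compatibility argument is where the realizability of an \emph{arbitrary} allowable sequence is actually proved, and you should not fold it into the invariance/contraction step. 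Two small corrections: the constructed parameters lie in a single component because the region $V$ (or $\tilde V$) is connected, not by Proposition~\ref{knseqsame}, which only gives the converse implication; and the vanishing of the multiplier at the degenerate parameter comes from the contraction-factor estimate in the Koebe computation (as in Proposition~\ref{multtozero}), with Proposition~\ref{zeroinfcycle} playing the role of motivation rather than proof.
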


  Note that a regular sequence may   have an even period.   Since there are infinitely many kneading sequences that share a given initial segment, a  corollary of this theorem is
  \begin{cor}  There are infinitely many hybrid components that share a virtual center with one regular component.
  \end{cor}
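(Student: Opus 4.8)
The plan is to read this off from Theorem~\ref{Main Theorem A} (Theorem~D) together with Proposition~\ref{knseqsame}. First I would fix a regular pseudo-hyperbolic component $\Omega_0$ whose kneading sequence $S_0=*k_1\cdots k_l$ has $l\geq 2$; by Lemma~\ref{regseq} the digits $k_1,\ldots,k_l$ are then all nonzero. By Theorem~\ref{omeganprops}\,(\ref{dd}) the component $\Omega_0$ has a virtual center $\la^*$, and by part (2) of Theorem~\ref{Main Theorem A} this $\la^*$ is precisely the regular virtual cycle parameter whose address is $k_1\cdots k_l$.

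Next I would exhibit infinitely many distinct hybrid kneading sequences whose ``regular initial segment'' is exactly $k_1\cdots k_l$. Following the notation used in the definition of the L-R structure, one forms
\[ S = *\,k_1\cdots k_l\,k_{l+1}\,0\,k_{l+3}\,0\,\cdots\,k_{n-2}\,0, \qquad k_{l+1}\neq 0, \]
and, by letting the bouncing tail grow (for instance taking $k_{l+1}=1$ and all later free entries equal to $0$, with the tail length $n-1-l$ even and as large as we please), one obtains infinitely many pairwise distinct such sequences $S$. Each of these is allowable in the sense of Definition~\ref{defn9}: since $k_1\neq 0$ it is neither identically zero nor of the forbidden form $*0k_20\cdots 0k_{n-1}0$; it ends in $0$, so it is not regular; and since $l\geq 2$ forces $k_2\neq 0$ it is not unipolar. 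Hence each such $S$ is a genuine allowable hybrid sequence.

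Then I would apply part (3) of Theorem~\ref{Main Theorem A} to each of these sequences: it produces a hybrid component $\Omega_S$ with kneading sequence $S$ whose virtual center is the regular virtual cycle parameter corresponding to the first $l$ terms $k_1\cdots k_l$ of $S$ --- that is, $\la^*$ once more. By Proposition~\ref{knseqsame} two components carrying different kneading sequences are distinct, so the components $\Omega_S$ are pairwise distinct and none of them coincides with $\Omega_0$ (which is regular, not hybrid). Thus $\la^*$ is the common virtual center of the single regular component $\Omega_0$ and of the infinitely many hybrid components $\{\Omega_S\}$, which is the asserted statement.

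The step I expect to require the most care is purely combinatorial: confirming that the tails appended above really do yield allowable hybrid $q_i$-sequences in the exact sense of Definition~\ref{poleseq} and Theorem~\ref{thmB}\,(3) --- the parity of $n$, the alternating-zero pattern in the bouncing part, and the constraints $l>1$ and $k_l,k_{l+1}\neq 0$ --- and checking that the phrase ``the first $l$ terms of the sequence'' in Theorem~\ref{Main Theorem A}\,(3) names the very address attached to $\Omega_0$ by part (2). Once this dictionary between regular initial segments and regular virtual cycle parameters is matched up, the corollary follows immediately from Theorems~\ref{omeganprops} and~\ref{Main Theorem A} together with Proposition~\ref{knseqsame}.
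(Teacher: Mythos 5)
Your proposal is correct and follows essentially the same route as the paper, which derives the corollary directly from Theorem~D by observing that infinitely many hybrid kneading sequences share a given regular initial segment, so parts (2) and (3) produce one regular and infinitely many hybrid components all anchored at the same virtual cycle parameter. Your additional bookkeeping (allowability of the appended tails, distinctness of components via Proposition~\ref{knseqsame}) just makes explicit what the paper leaves implicit.
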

\begin{remark}
    Although infinity is not a parameter, it is a boundary point of infinitely many  components for which it acts as a virtual center: the regular period $1$ component whose kneading sequence  is $*$   and the infinitely many pure unipolar components that are defined by sequences   that consist only of the pole followed by alternating $0$'s.
\end{remark}

We prove each part of the theorem separately.
  The proof of the first part takes advantage of the fact that,  restricted to neighborhoods of $0$ and $\infty$,  the second iterate of a map in $\calfp_2$ acts like an exponential map and the structure of the unipolar pseudo-hyperbolic components is reminiscent of  the structure of the hyperbolic components of exponential maps  described  in \cite{DFJ, S}.    The proof of the second part is very similar to  the proof of Theorem B in \cite{CK1}   in which the dynamics of a function whose asymptotic value is a prepole is perturbed. The proof of the third part combines techniques in the first two.

  \subsection{Proof of theorem~\ref{Main Theorem A} part $(1)$:   unipolar maps}\label{puremaps}

 It follows from the definition of the kneading sequence (and remark~\ref{lainR}) and from lemma~\ref{separating}, that to prove the existence of a $\la$ with the given kneading sequence $*k_10\cdots 0k_{n-2 }0$ it suffices  to show there is a $\la$ such that in the L-R structure of $f_{\la}$, $\la \in R_{k_1 0 \ldots  k_{n-2}0}(\la)$.  For such a $\la$,
 $$ R_{k_10\cdots 0k_{n-2 }0}(\la) \subset L_{k_10\cdots 0k_{n-4 }0} (\la)\subset L_{k_1 0 \ldots  0 k_{n-6}0} (\la)\cdots \subset L_{k_10}(\la) \subset L $$
   so that $f_{\la}^{n-1}(\la)$ is contained in a right half-plane; it is clear,  that $n-1$ must be  even.

 We will need the  basic estimates in ~(\ref{basic estimate}) for the exponential map which we will use to follow the orbit of $\la$.

 To construct the pseudo-hyperbolic components we proceed inductively.   The first step is to construct an $f_{\la}$ of period $3$ with $S=*k0$.  Since we have assumed the sequence is allowable, $k \neq 0$.   We first construct a ``candidate'' strip using the estimates above.  Then we prove that for $\la$ in that strip, $f_{\la}$ has an attracting cycle of period $3$.

We have seen that the parameter space is symmetric about the real axis and that there is an infinite segment of the positive real axis contained in a period $1$ component.   In this proof we will consider strips about the lines whose imaginary parts are odd multiples of $\pi$.   In order to keep the labeling consistent with the symmetry, and because $k \neq 0$, we will label these lines by $2k-1$ in the upper half-plane where $k>0$ and by $2k +1$ in the lower half-plane where $k<0$.

 \begin{lemma}\label{per3}
 Given the kneading sequence $S(\la)=*k0$, $k \neq 0$,
 there exists a large $M>0$ and a small $\epsilon>0$ such that
 if  $-2\la$ is in the half strip of the right half-plane centered along the line whose imaginary part is an odd multiple of $\pi$, that is, for $k>0$
  $$-2\lambda\in \calr_{k,M,\epsilon}^{odd}=\{ z= x+iy  \ | \ x>M, (2k-1) \pi-\frac{\pi}{2}+\epsilon <y< (2k-1)\pi+\frac{\pi}{2}-\epsilon\},$$
  and for $k<0$
 $$-2\lambda\in \calr_{k,M,\epsilon}^{odd}=\{ z= x+iy  \ | \ x>M, (2k+1) \pi-\frac{\pi}{2}+\epsilon <y< (2k+1)\pi+\frac{\pi}{2}-\epsilon\},$$
then  $f_{\la}$ has an attracting cycle of period $3$ with the given kneading sequence.
 \end{lemma}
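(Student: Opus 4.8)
\textbf{Proof plan for Lemma~\ref{per3}.}

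The plan is to follow the orbit of $\la$ under $f_{\la}$ using the basic estimates~(\ref{basic estimate}), show that this orbit returns close to $\la$, and then apply a fixed-point / contraction argument to the third-iterate map $f_{\la}^3$ to produce an attracting $3$-cycle. First I would set $w = -2\la$, so the hypothesis is that $w$ lies in a thin half-strip $\calr_{k,M,\epsilon}^{odd}$ far out in the right half-plane, centered on the horizontal line $\Im w = (2k\mp 1)\pi$. Since $\Re \la \to -\infty$ as $\Re w \to +\infty$, we are in the regime where~(\ref{basic estimate}) applies: $f_{\la}(\la) \sim -\la/e^{-2\la} = -\la e^{w}$ and $f_{\la}^2(\la) \sim -e^{-2\la}/2 = -e^{w}/2$. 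The point of centering $w$ on a line with $\Im w$ an \emph{odd} multiple of $\pi$ is that $e^{w} = e^{\Re w}e^{i\Im w}$ then points along the \emph{negative} real axis, so $-e^{w}/2$ has large positive real part: $f_{\la}^2(\la)$ lands deep in the right half-plane $R$, in fact in the asymptotic tract of $\la$. Next, $f_{\la}^3(\la) = f_{\la}(f_{\la}^2(\la))$; since $f_{\la}^2(\la)$ has very large positive real part, $e^{-2 f_{\la}^2(\la)} \approx 0$ and $f_{\la}^3(\la) = \la/(1 - e^{-2 f_{\la}^2(\la)}) \approx \la$. So the orbit $\la \mapsto f_{\la}(\la) \mapsto f_{\la}^2(\la) \mapsto f_{\la}^3(\la)$ comes back to a small neighborhood of $\la$, and along the way $f_{\la}(\la)$ is a huge point (so in $L$, since $\Re(-\la e^w)$ is very negative when $\la$ and $w$ are arranged correctly — this is where the address $k$ of the pole between $0$ and the relevant region enters, and where I must check $f_{\la}(\la) \in L_{k0}$ region so the kneading sequence reads $*k0$), and $f_{\la}^2(\la) \in R$. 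This is the verification that the kneading sequence is exactly $*k0$: $\la \in R_{k0}(\la) \subset L_{k0} \subset L$, matching the condition singled out at the start of Section~\ref{puremaps}.

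The second part is to upgrade "the orbit nearly returns" to "there is a genuine attracting $3$-cycle." The clean way is to fix $\la$ in the strip and consider the branch $g$ of $f_{\la}^{-3}$ that undoes the three steps just described — i.e. the composition of the inverse branch of $f_{\la}$ near $\la$ landing in $R_{k0}$, the inverse branch near $f_{\la}^2(\la)$, and the inverse branch near $f_{\la}(\la)$, each well-defined on a suitable disk because away from the asymptotic values $f_{\la}$ is a covering with single-valued local inverses. I would show that $g$ maps a small disk $D$ around $\la$ into itself and is a contraction there, using the derivative estimates of the type already derived in the proof of Proposition~\ref{zeroinfcycle}: near $f_{\la}^2(\la)$ (large positive real part) we have $f_{\la}' \approx -2\la e^{-2 f_{\la}^2(\la)}$, which is \emph{exponentially small}, so the corresponding inverse-branch derivative is exponentially large — wait, that is the wrong direction, so instead I track $(f_{\la}^3)'(\la)$ directly: it is the product $f_{\la}'(\la)\cdot f_{\la}'(f_{\la}(\la)) \cdot f_{\la}'(f_{\la}^2(\la))$, and the third factor is $O(e^{-2\Re f_{\la}^2(\la)})$ which swamps any polynomial growth from the first two factors, so the product $\to 0$ as $M \to \infty$. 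Hence for $M$ large, $|(f_{\la}^3)'| < 1$ on a neighborhood, which is exactly the contraction statement for $g = f_{\la}^{-3}$; by the contraction mapping principle $g$ has a unique fixed point $z_0 \in D$, and $\{z_0, f_{\la}(z_0), f_{\la}^2(z_0)\}$ is an attracting $3$-cycle with multiplier $(f_{\la}^3)'(z_0)$ of modulus $<1$. The kneading sequence of this cycle is $*k0$ because the cycle lies in the same sets $R_{k0}, R_0 \equiv R_{k0}\,(\text{as per the coincidence in an attracting case}), L_{k0}$ as the orbit of $\la$, and by Remark~\ref{lainR} we may take $z_0 \in R$.

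For the uniform choice of $M$ and $\epsilon$ independent of the particular point in the strip, I would note that all the estimates above are uniform on $\calr_{k,M,\epsilon}^{odd}$ once $M$ is large and $\epsilon>0$ keeps $\Im w$ bounded away from the odd-multiple-of-$\pi$ boundary by $\epsilon$ (so that $e^{w}$ stays within a fixed angular sector around $\RR^-$, keeping $\Re f_{\la}^2(\la)$ comparable to $\tfrac12 e^{\Re w} \cos(\text{something} < \tfrac\pi2 - \epsilon) \to +\infty$). The sign convention splitting $k>0$ (line $2k-1$) from $k<0$ (line $2k+1$) is just the bookkeeping needed so the address of the intervening pole matches the label $k$, consistent with the symmetry of parameter space about $\RR$; the analysis is identical after reflection. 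The main obstacle I anticipate is not the contraction estimate — that follows from the exponential smallness of the third derivative factor — but rather the \emph{combinatorial} verification that the returning orbit sits in precisely the sets $R_{k0} \subset L_{k0} \subset L$ and not in a neighboring strip's sets; this requires carefully relating the geometry of $L_{00}, R_{00}$ and their pullbacks (as in Remark~\ref{RLorder} and the proof of Lemma~\ref{separating}) to the image $-e^w/2$ of the strip, and tracking how the imaginary part of $-\la e^w$ selects the pole $k\pi i$. Once the period-$3$ base case is in hand, the inductive step constructing components for longer unipolar sequences $*k_1 0 \cdots 0 k_{n-2} 0$ proceeds by the same mechanism, pulling back one more "bounce from $\infty$ to $0$ and back" at each stage and using that $f_{\la}^2$ restricted to neighborhoods of $0$ and $\infty$ behaves like an exponential map, exactly as in the exponential-family picture of \cite{DFJ, S}.
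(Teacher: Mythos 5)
Your overall strategy --- follow the orbit of $\la$ using the estimates~(\ref{basic estimate}), show that $f_\la^3(\la)$ returns near $\la$, and convert this into an attracting $3$-cycle --- is the same as the paper's, but there is a genuine error in your orbit tracking. You write $f_\la(\la)\sim -\la/e^{-2\la}=-\la e^{w}$ with $w=-2\la$ and conclude that $w_1=f_\la(\la)$ is ``a huge point in $L$.'' In fact $-\la/e^{-2\la}=-\la e^{2\la}=-\la e^{-w}$, which is exponentially \emph{small}: $w_1$ lands in a tiny neighborhood of the polar asymptotic value $0$ (in the set $R_0$ of the L-R structure), not deep in the left half-plane. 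This is not a cosmetic slip. The whole point of a unipolar cycle $*k0$ is that the orbit bounces from the far left to a neighborhood of $0$ and only then to the far right, so that $0\in\partial A_2$; your picture ($\la$ deep left $\to$ deep left $\to$ deep right) is dynamically impossible --- if $w_1$ were deep in $L$, then $w_2=f_\la(w_1)$ would be near $0$, contradicting your own (correct) formula $w_2\sim -e^{w}/2$. Consequently the combinatorial step you flag as ``the main obstacle,'' tracking ``the imaginary part of $-\la e^{w}$'' to select the pole $k\pi i$, is aimed at the wrong quantity; the pole is selected by the imaginary part of $\la$ itself, i.e.\ by which strip $\calr_{k,M,\epsilon}^{odd}$ contains $-2\la$, which is exactly why the lemma is phrased in terms of these strips.

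Your closing fixed-point argument is also under-quantified. Asserting $|(f_\la^3)'|<1$ near $\la$ and invoking the contraction mapping principle requires a disk $D$ around $\la$ on which the derivative bound holds \emph{and} which satisfies $f_\la^3(D)\subset D$; the latter needs a comparison between the displacement $|f_\la^3(\la)-\la|$ and the size of the region on which the relevant inverse branch is defined. The paper supplies this via the Koebe $1/4$ theorem: $\mathrm{dist}(\la,\partial U_0)$ is bounded below by (roughly) $|e^{2\la}|$, while $|f_\la(z)-\la|$ for $z\in U_2$ is bounded above by a doubly-exponentially small quantity of size $|\la e^{-2w_2}|$, whence $f_\la^3(U_0)$ is properly contained in $U_0$ and the Schwarz lemma gives the attracting cycle. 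Relatedly, your claim that the third derivative factor ``swamps any polynomial growth from the first two factors'' misjudges the middle factor: $f_\la'(w_1)\sim-\la/(2w_1^2)$ is exponentially \emph{large} because $w_1$ is exponentially close to the pole at $0$. The conclusion survives only because $|f_\la'(w_2)|\sim|2\la e^{-2w_2}|$ is doubly-exponentially small, but the bookkeeping as you state it would not close.
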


 \begin{proof}

Set $w_0=\lambda$ and $w_n=f_\lambda^n(\lambda)$, $n=1, 2, \cdots$. Note that each $w_n$ is a holomorphic function of $\lambda$.  Using the estimates above, if for $M$ large enough, $-2\la \in \calr^{odd}_{k,M,\epsilon}$, then

 $$w_1=\frac{\lambda}{1-e^{-2\lambda}}\simeq -\frac{\lambda}{e^{-2\lambda}}\simeq 0$$  so that
$$w_2=\frac{\lambda}{1-e^{-2w_1}}\simeq \frac{\lambda}{2w_1}\simeq -\frac{e^{-2\lambda}}{2},$$
and  for small $\epsilon$, $\Re w_2  > e^M \sin \epsilon$.

 Choose and fix a $\la$ such that  $-2\la \in\calr^{odd}_{k,M,\epsilon}$   and consider its dynamical plane.   We will show $f_{\la}$ has an attracting period $3$ cycle.
   Let $U_2=\{z\  | \  \Re z>\Re w_2-1\}$, $U_1$ the component of $f^{-1}_{\lambda}(U_2)$ that contains $w_1$ and whose boundary contains $0$, and $U_0$  the component of $f^{-1}_\lambda(U_1)$ containing $\lambda$.   Because these components are distinct and only one may contain $\la$, $\la \not\in U_0 \cup U_1$; the pole zero is not inside any component.   Since $f^2_{\lambda}: U_0\to U_2$ is a  conformal homeomorphism, its inverse is well-defined;  denote it by $h_1$. By the Koebe $1/4$ theorem,
\begin{align*}
dist(\lambda, \partial U_0)\geq \frac{|h'_1(w_2)|}{4} &=\frac{1}{4}\frac{1}{|f'_{\lambda}(w_1)|}\frac{1}{|f'_{\lambda}(\lambda)|}\\
&=\frac{1}{4}\frac{|(1-e^{-2w_1})^2|}{|\lambda e^{-2w_1}|}\frac{|(1-e^{-2\lambda})^2|}{|\lambda e^{-2\lambda}|}\\
&\simeq \frac{1}{4}\frac{|4w_1^2|}{|\lambda|}\frac{|(1-e^{-2\lambda})^2|}{|\lambda e^{-2\lambda}|}=|\frac{w_1^2(1-e^{-2\lambda})^2}{\lambda^2e^{-2\lambda}}|=|e^{2\lambda}|\end{align*}
Furthermore, for any $z\in U_2$,

\begin{align*}|f_{\lambda}(z)-\lambda|=\frac{|\lambda e^{-2z}|}{|1-e^{-2z}|}< \alpha e^2|\lambda e^{-2 w_2}|\simeq \alpha e^2 |\lambda e^{-e^{-2\lambda}}| \end{align*} for some positive constant $\alpha<2$ depending on $M$ and $\epsilon$.

 Choosing $M$ and $\epsilon$ respectively large and small enough, we can assure that $\Re w_2>>0$ and $|e^{2\lambda}|>  \alpha e^2 |\lambda e^{-e^{-2\lambda}}|$ so that $f_{\lambda}^3(U_0)$ is properly contained in $ U_0$. Therefore  by the Schwarz lemma,  there is an attracting cycle in $U_0, U_1, U_2$ and the set  $R_{k0}$  of the L-R structure for $f_{\la}$ is contained in $U_0$.\footnote{Note that the notation  convention defined in section~\ref{p-h maps} for the labelling of the basins is reversed here so that  $w_i \in U_i$;  the basins of the cycle  are labelled $A_0, A_1, A_2$, with $\la \in A_1$, $U_0 \subset A_1, \, U_1 \subset A_2$ and $U_2 \subset A_0$.  We will continue this convention throughout this section.}
\end{proof}

It follows that for large enough $M$ and small enough $\epsilon$, there is a  unipolar pseudo-hyperbolic component of period $3$ with  kneading sequence $*k0$.  Moreover,
for each $\la$ such that $-2\la \in \calr^{odd}_{M,k,\epsilon}$, if $k>0$,  the   set  $R_{k0}(\la)$ of the L-R structure of the dynamical plane of $f_{\la}$   is contained in the left half strip
$$ \call_{M/2,\epsilon,k} =\{ z=x+iy  \, | \, x < -M/2, k \pi-\frac{3\pi}{4}+\epsilon <y< k\pi-\frac{\pi}{4}-\epsilon\}$$ and for $k<0$ it is contained in the left half strip
$$ \call_{M/2,\epsilon,k} =\{ z=x+iy  \, | \, x < -M/2, k \pi+\frac{\pi}{4}+\epsilon <y< k\pi+\frac{3\pi}{4}-\epsilon\}.$$

Since all the strips contain the half-lines, $z=x+i (2k \pm 1)\pi/2 $, where the sign is determined by the parity of $k$,  $\cap R_{k0}(\lambda)\neq \emptyset$ and this intersection is contained in the asymptotic tract of $0$ for each $f_{\la}$.   It follows that  we can identify this intersection with a neighborhood of infinity in the parameter plane $\CC$ and this neighborhood is contained in a period $3$ pseudo-hyperbolic component.

To prove  theorem~\ref{Main Theorem A} part $(1)$ for  all kneading sequences $S=*k_1 0 \ldots  k_{n-2}0$,  we generalize the construction above.
Because the period is odd, we  set $n = 2m+1$.  Then, using the kneading sequence,
we  create  pairs of disjoint strips in the left and right half  planes where again   $\la$ is in the strip in the left half-plane but now $w_{2m}=f^{2m}_{\la}(\la)$ is in the strip in the right half-plane.

More precisely, to find the attracting cycles of period $2m+1$, we need to look for $\lambda$'s, whose orbits $\{w_0, w_1, \cdots, w_{2m}\}$ satisfy

\begin{itemize}
\item $w_{2i-1}$, $i=1, \cdots, m$ is sufficiently close to zero,

\item $w_0,w_2, \cdots w_{2m-2} $  are in the left half-plane  and their real parts have sufficiently large absolute values.
\item $w_{2m}$ is in the right half-plane with sufficiently large real part.

\end{itemize}

To do this, we need to define a structure on the parameter plane that is analogous to the L-R structure we defined in the dynamical planes of each of the functions $f_{\la}$.
We  define  strips in the right half-plane centered on the lines whose imaginary parts, $2k\pi$, are even  multiples of $\pi$,
$$\mathcal{R}_{k, M, \epsilon}^{even}=\{ \lambda= x+iy\  | \ x>M, 2k\pi-\frac{\pi}{2}+\epsilon <y< 2k\pi+\frac{\pi}{2}-\epsilon\}.$$
  Assume $M>0$ is large and $\epsilon>0$ is small.  For readability set  $E(z)=e^z$.

The following lemma  is a restatement of the first half of  theorem~\ref{Main Theorem A} part $(1)$.
\begin{lemma}
\label{pureuniexist}
Given $n$  and an allowable unipolar kneading sequence $S=*k_10\cdots 0k_{n-2 }0$,  there is a $\la \in \CC^*$ such that $f_{\la}$ has an attracting cycle of  period $n$ with this kneading sequence.
In particular, there is a $\la$ such that for
 $i = 1, \cdots, m-2$,
$$E^i(-2\lambda)\in  \mathcal{R}^{even}_{k_{2i+1}, M,\epsilon}$$ and $$ E^{m-1}(-2\lambda)\in \mathcal{R}^{odd}_{k_{n-2},M,\epsilon}.$$
 \end{lemma}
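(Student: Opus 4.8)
The plan is to bootstrap the period-$3$ construction of Lemma~\ref{per3} into an induction on $m$, tracking the orbit $w_0 = \lambda, w_1, \ldots, w_{2m}$ through the exponential-like estimates~(\ref{basic estimate}). The governing idea is that on the ``bouncing'' part of the cycle the second iterate $f_\lambda^2$ behaves, up to controlled error, like $z \mapsto -e^{-2z}/2$; composing it with itself iteratively turns the requirement ``$w_{2i-1}$ near $0$, $w_{2i}$ deep in the left half-plane'' into a nested sequence of conditions on $E^i(-2\lambda)$ lying in prescribed half-strips $\mathcal{R}^{even}_{k_{2i+1},M,\epsilon}$. First I would set up the forward recursion: assuming $-2\lambda \in \mathcal{R}^{even}_{k_3,M,\epsilon}$ (or $\mathcal{R}^{odd}$ for the first step, matching the parity convention in Lemma~\ref{per3}), the estimate $w_1 \simeq -\lambda e^{2\lambda} \simeq 0$ gives $w_2 \simeq -e^{-2\lambda}/2$, whose real part is large and negative when $\Re(-2\lambda)$ is large; then $-2w_2 \simeq e^{-2\lambda} = E(-2\lambda)$, so the condition that $w_2$ plays the same role one step further around the cycle is exactly $E(-2\lambda) \in \mathcal{R}^{even}_{k_5,M,\epsilon}$, i.e.\ $-2w_2 \in \mathcal{R}^{even}_{k_5,M,\epsilon}$. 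Iterating, $-2w_{2i} \simeq E^i(-2\lambda)$, and the hypothesis $E^i(-2\lambda) \in \mathcal{R}^{even}_{k_{2i+1},M,\epsilon}$ for $i = 1,\ldots,m-2$ together with $E^{m-1}(-2\lambda) \in \mathcal{R}^{odd}_{k_{n-2},M,\epsilon}$ forces $w_3, w_5, \ldots$ close to $0$, $w_2, w_4, \ldots, w_{2m-2}$ far to the left, and finally $w_{2m}$ far to the right.

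Next I would verify that such $\lambda$ exist: because $E$ maps each strip $\mathcal{R}^{even}_{k,M,\epsilon}$ onto a region that overlaps (for suitable $M$, $\epsilon$) a left half-plane slit containing strips of every index, one can pull back the target constraint $E^{m-1}(-2\lambda) \in \mathcal{R}^{odd}_{k_{n-2},M,\epsilon}$ successively through the branches of $\mathrm{Log}$ dictated by $k_{2i+1}$, $k_3$; the nesting is nonempty provided $M$ is chosen large and $\epsilon$ small, uniformly in $m$ and in the (fixed, finite) sequence of $k_j$'s. This is where the allowability hypothesis $k_{n-2} \neq 0$ matters — it keeps the final strip away from the line $\Im = 2k\pi$ where the two candidate preimages of $R_0$ collide, which is precisely the obstruction ruled out in Lemma~\ref{separating}.

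With $\lambda$ fixed, I would then promote the orbit estimates to an honest attracting cycle exactly as in Lemma~\ref{per3}: set $U_{2m} = \{z \mid \Re z > \Re w_{2m} - 1\}$, pull it back step by step along the branches of $f_\lambda^{-1}$ selected by the orbit to get $U_{2m-1}, \ldots, U_1, U_0 \ni \lambda$ (the pole $0$ sitting on $\partial U_{2j-1}$, never inside any $U_j$), and observe $f_\lambda^{2m+1} \colon U_0 \to U_{2m}$ is a conformal homeomorphism onto a half-plane strictly containing $U_0$. The Koebe $1/4$ theorem applied to the inverse branch $h = (f_\lambda^{2m+1}|_{U_0})^{-1}$, together with the product formula for $(f_\lambda^{2m+1})'$ along the orbit — each factor $f_\lambda'(w_{2j-1})$ contributing a large $|1-e^{-2w_{2j-1}}|^2/|\lambda e^{-2w_{2j-1}}| \sim |4w_{2j-1}^2/\lambda|$ and each $f_\lambda'(w_{2j})$ contributing a small $|\lambda e^{-2w_{2j}}|$ — gives $\mathrm{dist}(\lambda, \partial U_0)$ large compared with the diameter of $f_\lambda(U_{2m})$, so $f_\lambda^{2m+1}(U_0) \Subset U_0$ and the Schwarz lemma yields an attracting cycle, with $R_{k_1 0 \cdots k_{n-2}0}(\lambda) \subset U_0$, hence kneading sequence $S$ by the characterization preceding the lemma.

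The main obstacle I anticipate is bookkeeping the compounding of the error terms in~(\ref{basic estimate}): each application of $f_\lambda^2$ introduces a multiplicative error of the form $1 + O(e^{-cM})$, and one applies it $m$ times where $m$ is unbounded, so I would need the errors to be summable/productable uniformly in $m$ — which they are, since after the first step $\Re(-2w_{2i})$ grows like a tower of exponentials, making the successive error bounds shrink superexponentially fast, so choosing $M$ large once suffices for all $m$. A secondary point requiring care is that the pulled-back domains $U_j$ are genuinely distinct (so $\lambda \notin \bigcup_{j\geq 1} U_j$ and the cycle has exact period $2m+1$, not a proper divisor); this follows from the strips $\mathcal{R}^{\bullet}_{k,M,\epsilon}$ being disjoint for distinct $k$ together with the periodicity $C_k = C_0 + k\pi i$ of the prepole structure, but it should be stated explicitly.
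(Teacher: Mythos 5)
Your proposal follows essentially the same route as the paper's proof: the same translation of the orbit conditions into membership of $E^i(-2\lambda)$ in the strips $\mathcal{R}^{even}_{k,M,\epsilon}$ and a final odd strip, the same nonemptiness argument via the image sector of $E$ meeting every strip, and the same Koebe $1/4$ plus Schwarz lemma step to upgrade the candidate $\lambda$ to an honest attracting cycle with $R_{k_10\cdots k_{n-2}0}(\lambda)\subset U_0$. Your extra remarks on uniform error control and distinctness of the $U_j$ are sensible refinements rather than deviations (though note that allowability of a unipolar sequence only requires some odd entry to be nonzero, not $k_{n-2}\neq 0$ specifically).
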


\begin{proof}
 If $M$ is sufficiently large  and $\epsilon$ is sufficiently small,  by periodicity,  for any $k$, $I=E(\calr^{even}_{k ,M,\epsilon})$ is a region outside the disk $|z| < e^M$ and contained between the rays $ te^{\pm\epsilon i}$, $t > e^{M}$.   That is,  it is a  sector at infinity in the right half-plane, symmetric with respect to  the real line and lying outside the disk  $|z| < e^M$.   To insure that the disk intersects the line $\Re z = M$, we choose $\epsilon$ small enough so that $e^M \sin \epsilon >M$.  Note  that for all $k$ and $\Re z$  large enough,   all  the strips $\mathcal{R}^{even}_{k, M, \epsilon}$ and  $\mathcal{R}^{odd}_{k, M, \epsilon}$ intersect $I$.   Figure~\ref{strips}  illustrates this.

   Note that for any $k$, and for any $\la$ with  $-2\la \in \mathcal{R}_{k, M, \epsilon}^{even}$,     $E(-2\la) \in I$;  thus $w_2\simeq -E(-2\lambda)/2$ is in the left half-plane.  In particular, this is true for $k=k_1$.

     Now choose $\lambda$ so that   $-2\lambda\in \mathcal{R}^{even}_{k_1, M,\epsilon}$  and  also require that  its image  $E(-2\lambda)$ is in $I \cap \mathcal{R}^{even}_{k_2, M,\epsilon}$. Then,
      $$w_3=\frac{\lambda}{1-e^{-2w_2}}\simeq -\frac{\lambda}{e^{-2w_2}}\simeq -\frac{\lambda}{e^{e^{-2\lambda}}}\simeq 0$$ and $$w_4=\frac{\lambda}{1-e^{-2w_3}}\simeq \frac{\lambda}{2w_3}\simeq -\frac{e^{e^{-2\lambda}}}{2}=\frac{-E^2(-2\lambda)}{2}.$$

  As  above,  $E^2(-2\lambda)$ is contained in $I$ and
   $\Re E^2(-2\lambda)> e^{e^M\sin\epsilon}\sin \epsilon/2$;  thus $w_4\simeq -E^2(-2\lambda)/2$ is in the left half-plane and even further to the left than $w_2$.  See figure~\ref{strips} again.

 \begin{figure}
  \centering
  \includegraphics[width=3in]{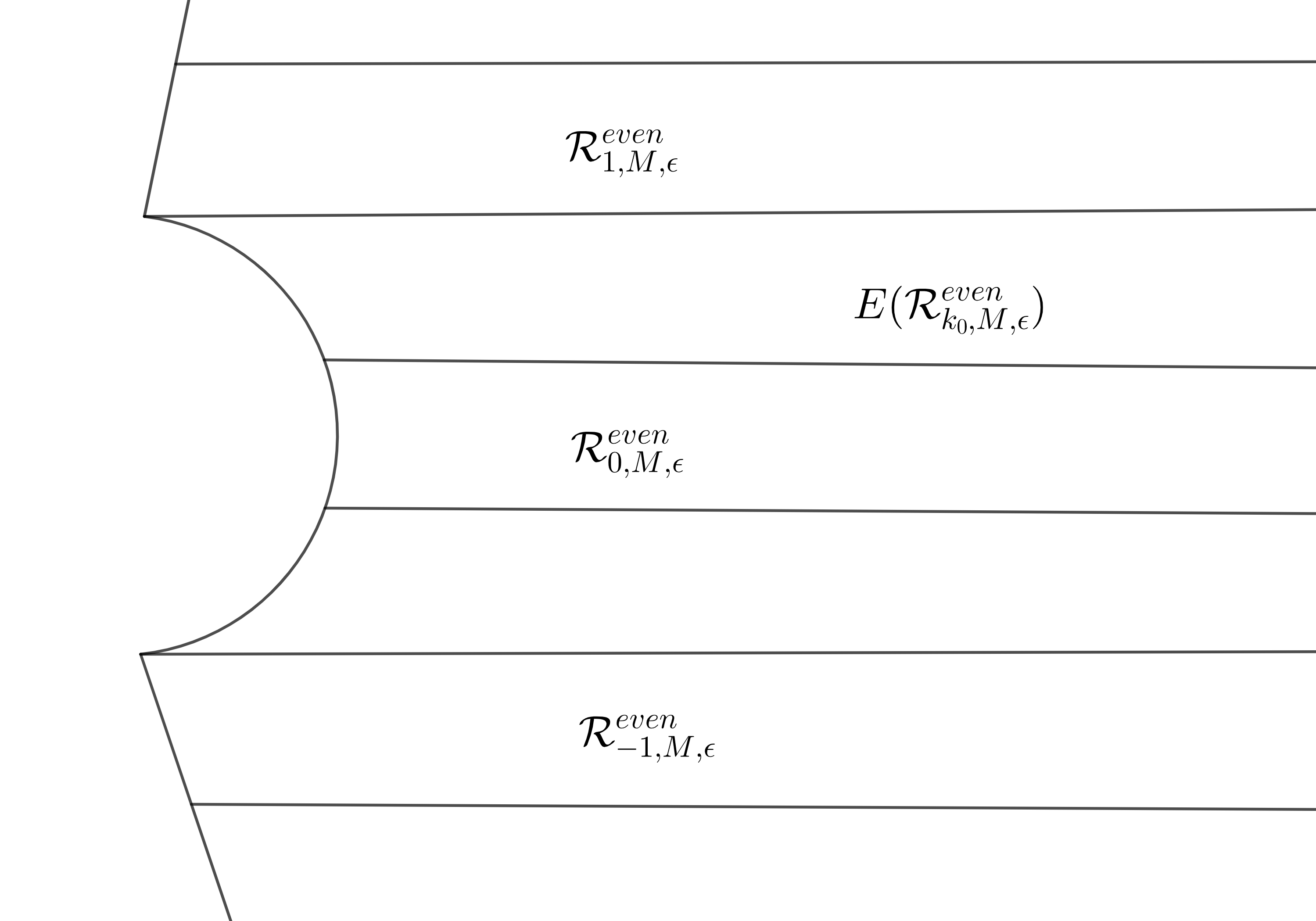}\\
  \caption{The intersection of the image $I$ of $E(\mathcal{R}^{even}_{k, M, \epsilon})$ for any $k$, with  the strips $I \cap  \mathcal{R}^{even}_{{k_i}, M,\epsilon}$}
 \label{strips}
\end{figure}

  We can iterate this procedure, choosing $\la$  so that for  $i= 1, \ldots, n-2$, the successive  images   $E^i(-2\la)$ lie in the strips $I \cap  \mathcal{R}^{even}_{{k_i}, M,\epsilon}$.   Having done this,   $w_{i}$ is  in the left half-plane,
     $$w_{2i-1}=\frac{\lambda}{1-e^{-2{w_{2i-2}}}}\simeq -\frac{\lambda}{e^{-2w_{2i-2}}}\simeq -\frac{\lambda}{E^{i}(-2\lambda)}\simeq 0$$ and $$w_{2i}=\frac{\lambda}{1-e^{-2w_{2i-1}}}\simeq \frac{\lambda}{2w_{2i-1}}\simeq \frac{-E^{i}(-2\lambda)}{2}.$$
  Now,  modify the choice of $\la$ so that, in addition to the above, it satisfies $E^{m-1}(-2\la)\in \mathcal{R}^{odd}_{k_{n-1}, M,\epsilon  }$.  Then, for such a $\la$,
     $w_{2i}\simeq -E^i(-2\lambda)/2$,  $i=0, \cdots, m-1$,  is in  the left half-plane,  and $w_{2m}\simeq - e^{E^{n-1}(-2\lambda)}/2$ is in the right half-plane.

  Let  $\tilde{V}$ denote the set of $\la$'s defined by the intersection of all these regions.  Any $\la \in \tilde{V}$ is a candidate to have an attractive cycle of period $n=2m+1$.   We now show it does.

Fix the candidate $\lambda$, and look at the dynamical plane of $f_\lambda$.  Let $U_{2m}=\{z\  | \  \Re z>\Re w_{2m}-1\}$, and let $U_{2m-i}$ be the component of $f^{-i}_{\lambda}(U_{2m})$ containing $w_i$, for $i=1, \cdots, 2m$. Then $0\in \partial U_{2i-1}$ and $U_{2i}$ is unbounded.  Since $f^{2m}_{\lambda}: U_0\to U_{2m}$ is conformal, its inverse is well-defined;    denote it by $h_n$. By the Koebe $1/4$ theorem,
\begin{align*}
dist(\lambda, \partial U_0) &\geq \frac{|h'_n(w_{2m})|}{4}  \\
&=\frac{1}{4}\frac{1}{|f'_{\lambda}(\lambda)|}\frac{1}{|f'_{\lambda}(w_1)|} \cdots \frac{1}{|f'_{\lambda}(w_{2m-2})|}\cdot \frac{1}{|f'_{\lambda}(w_{2m-1})|}\\
&=\frac{1}{4}\frac{|(1-e^{-2\lambda})^2|}{|\lambda e^{-2\lambda}|}\cdot \frac{|(1-e^{-2w_1})^2|}{|\lambda e^{-2w_1}|} \cdots \frac{|(1-e^{-2w_{2m-2}})^2|}{|\lambda e^{-2w_{2m-2}}|}\cdot \frac{|(1-e^{-2w_{2m-1}})^2|}{|\lambda e^{-2w_{2m-1}}|}\\
&\simeq \frac{1}{4}\frac{|(1-e^{-2\lambda})^2|}{|\lambda e^{-2\lambda}|}\frac{|4w_1^2|}{|\lambda|}\cdots \frac{|(1-e^{-2w_{2m-2}})^2|}{|\lambda e^{-2w_{n-2}}|}\frac{|4w_{2m-1}^2|}{|\lambda|}\\
&=\frac{1}{4} \Big|\frac{w_1^2(1-e^{-2\lambda})^2}{\lambda^2e^{-2\lambda}}\Big|\cdots \Big|\frac{w_{2m-1}^2(1-e^{-2w_{2m-2}})^2}{\lambda^2e^{-2w_{2m-2}}}\Big|\\
&=\frac{4^{m-1}}{|E(-2\lambda)\cdots E^{n}(-2\lambda)|}\end{align*}
Furthermore, for any $z\in U_{2m}$,

\begin{align*}|f_{\lambda}(z)-\lambda|=\frac{|\lambda e^{-2z}|}{|1-e^{-2z}|}< \alpha e^2|\lambda e^{-2 w_{2m}}|\simeq \alpha e^2 |\lambda E^{n+1}(-2\lambda)| \end{align*} for some constant $0<\alpha < 2$.

It is obvious now, that for our chosen  $M, \epsilon>0$,    $\la \in \tilde{V} $    satisfies the conditions:
   $$E^i(-2\lambda)\in \mathcal{R}^{even}_{k_i, M,\epsilon} \mbox{for  } i=0, \cdots, n-2,$$
   $$E^{n-1}(-2\lambda)\in \mathcal{R}^{odd}_{k_{n-1}, M,\epsilon},$$

   $$E^i(-2\lambda)\in \mathcal{R}^{even}_{k_{2i+1}, M,\epsilon} \mbox{for  } i=0, \cdots, m-2,$$
   $$E^{m-1}(-2\lambda)\in \mathcal{R}^{odd}_{k_{n-2}, M,\epsilon},$$
   $$\Re w_{2m}>>0 \mbox{ and } $$
$$\frac{4^{m-1}}{|E(-2\lambda)\cdots E^{m}(-2\lambda)|}> \alpha e^2 |\lambda E^{m+1}(-2\lambda)|.$$   Therefore $f_{\lambda}^{2m+1}(U_0)$ is properly contained in $ U_0$ so by the Schwarz lemma, there is an attracting cycle in $U_0, U_1, \cdots, U_{2m}$.

It is clear from the construction that the set $R_{0k_2 0  \ldots 0 k_{n-2} 0}(\la)$ of the L-R structure of $f_{\la}$ is a  subset of $U_0(\la)$ and that for these $\la$'s  the intersection of the $R_{0k_2 0  \ldots 0 k_{n-2} 0}(\la)$ is not empty.   This interesection is contained in the asymptotic tracts of $0$ for  each $f_{\la}$.  If we identify this intersection with a neighborhood of the parameter plane, we obtain an open set in a pseudo-hyperbolic component all of whose $\la$'s have the given kneading sequence.
\end{proof}

We have shown that there exists a unipolar parameter $\la=x+iy$ with an attracting cycle and given allowable kneading sequence.     Let $\Omega$ denote the pseudo-hyperbolic component containing $\la$. From the proof, it is clear that  any $\la'=x'+iy$, $x'<x$, is in $\Omega$.
\begin{prop}\label{multtozero}  Suppose $\{\la_j\} \in \Omega $, with  $\Re \la_j \to -\infty$ and let $m(\mathbf{z^j})$ be the multipler of the attracting cycle of period $2m+1$ of $f_{\la_j}$.   Then $\lim_{j \to \infty}  m(\mathbf{z^j}) =0$.
\end{prop}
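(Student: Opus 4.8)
The plan is to estimate the multiplier of the period $2m+1$ cycle directly using the product formula and the exponential-tower estimates already developed in the proof of Lemma~\ref{pureuniexist}. Recall that for a cycle $\{z_0,\ldots,z_{2m}\}$ of $f=f_{\la_j}$ we have $m(\mathbf{z^j})=\prod_{i=0}^{2m}f'_{\la_j}(z_i)$, and that from the construction the cycle points satisfy $z_1,z_3,\ldots,z_{2m-1}\simeq 0$ while $z_0,z_2,\ldots,z_{2m}$ have real parts tending to $-\infty$ in absolute value (with $z_{2m}$ far in the right half-plane). The first step is to recall from the estimates in Section~\ref{virtmult}, in particular from \eqref{supattracting}, that for $z$ near a pole $k\pi i$ one has $f'_\lambda(z)\cdot f'_\lambda(f_\lambda(z))\to 0$ as $\Re(\lambda/(z-k\pi i))\to\pm\infty$; grouping the product of derivatives around the cycle into consecutive pairs $f'(z_{2i})f'(z_{2i+1})$, each such pair involves a point $z_{2i}$ deep in the left half-plane together with its image $z_{2i+1}\simeq 0$, which is exactly the regime in which \eqref{supattracting} forces the pair to be small.

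Concretely, I would carry out the following steps. First, since $\la_j\in\Omega$ with $\Re\la_j\to-\infty$, the continuity of the cycle points as holomorphic functions of $\lambda$ on $\Omega$ together with the estimates $w_{2i-1}\simeq -\la/E^i(-2\la)$ and $w_{2i}\simeq -E^i(-2\la)/2$ from the proof of Lemma~\ref{pureuniexist} show that $|z_{2i}^j|\to\infty$ with $\Re z_{2i}^j\to-\infty$ for $i=0,\ldots,m-1$, and $\Re z_{2m}^j\to+\infty$, while $z_{2i-1}^j\to 0$. Second, I would use the exact formula $f'_\lambda(z)=-2\lambda e^{-2z}/(1-e^{-2z})^2$ to bound each paired product $|f'_{\la_j}(z_{2i}^j)\,f'_{\la_j}(z_{2i+1}^j)|$: since $z_{2i+1}^j=f_{\la_j}(z_{2i}^j)$, the computation in Proposition~\ref{zeroinfcycle} gives $f'_\lambda(z_{2i}^j)f'_\lambda(f_\lambda(z_{2i}^j))\sim (-\lambda/2(z_{2i}^j)^2)\cdot(-2\lambda e^{\pm\lambda/(z_{2i}^j)})$ or the analogous expression, which tends to $0$ because the dominant factor is $\exp(\lambda/z_{2i}^j)$ and here $\lambda/z_{2i}^j$ has real part tending to $-\infty$ along the relevant orbit points (this is precisely the sign condition recorded in the definition of the unipolar virtual cycle). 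Third, I would handle the one leftover factor $f'_{\la_j}(z_{2m}^j)$: since $\Re z_{2m}^j\to+\infty$, we have $|f'_{\la_j}(z_{2m}^j)|=|2\lambda e^{-2z_{2m}^j}|/|1-e^{-2z_{2m}^j}|^2$, and although $|\lambda|\to\infty$, the factor $|e^{-2z_{2m}^j}|$ decays doubly-exponentially (it is essentially $|E^{m+1}(-2\lambda)|^{-1}$ up to constants), which dominates the linear growth of $|\lambda|$ exactly as in the Koebe estimate at the end of the proof of Lemma~\ref{pureuniexist}; hence this factor is also bounded (in fact $\to 0$). Multiplying, $m(\mathbf{z^j})\to 0$.

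Alternatively — and perhaps more cleanly — one can avoid re-doing the pairing bookkeeping: the multiplier $m(\mathbf{z^j})$ is, by Theorem~\ref{omeganprops}(1), the value $\calm(\la_j)$ of the multiplier map, which is a universal covering of $\DD^*$ on $\Omega$. One knows from Proposition~\ref{cycbehav}(3c) that the limit cycle as $\Re\la\to-\infty$ in $\Omega$ is $\{+\infty,-\infty,0,\ldots,-\infty,0\}$; this is a virtual cycle, and by Proposition~\ref{zeroinfcycle} (applied with $\la$ a prepole, equivalently by taking the appropriate limit of the virtual-multiplier computation) the virtual multiplier of such a cycle is $0$. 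Passing to the limit in the multiplier of the genuine cycles $\mathbf{z^j}$ — which is legitimate because the cycle points converge (in $\hat\CC$) to the virtual cycle and the derivative estimates are uniform on the relevant half-planes — yields $\lim_j m(\mathbf{z^j})=0$. I would present the first, self-contained argument as the main proof and mention this second viewpoint as a remark.

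The main obstacle I anticipate is the bookkeeping at the two ``ends'' of the cycle, namely the factor $f'_{\la_j}(z_0^j)$ paired with the first near-zero point, and the single unpaired factor $f'_{\la_j}(z_{2m}^j)$: one must check carefully that the growth of $|\la_j|$ (which appears in every derivative, since $f'_\lambda$ is linear in $\lambda$) is genuinely beaten by the exponential/double-exponential decay coming from $e^{-2z}$ at the points with large real part and from $e^{\lambda/z}$ at the points with large negative real part. All the needed inequalities are already present in the proof of Lemma~\ref{pureuniexist} (the Koebe estimate there shows $\operatorname{dist}(\la,\partial U_0)\gtrsim 4^{m-1}/|E(-2\la)\cdots E^n(-2\la)|$, which is exactly the reciprocal-type bound controlling the product of the $|f'|$'s), so the argument should reduce to quoting those estimates and observing that the $\simeq$'s become genuine limits as $\Re\la_j\to-\infty$.
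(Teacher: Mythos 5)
Your overall strategy (bound the multiplier directly via the chain-rule product) can be made to work, but as written your Step 2 contains a genuine error: the pairing is off by one, and the asymptotics you quote do not apply to the points you apply them to. The estimate (\ref{supattracting}) concerns $z$ \emph{near a pole} $k\pi i$ together with its far-away image; in that regime $f'_\la(z)\sim -\la/(2(z-k\pi i)^2)$ is large and $f'_\la(f_\la(z))\sim -2\la e^{\pm 2f_\la(z)}$ is doubly-exponentially small, and the product tends to $0$. Your pairs $f'(z_{2i})f'(z_{2i+1})$ have the roles reversed: $z_{2i}$ is deep in the left half-plane (where $f'_\la(z)\sim -2\la e^{2z}$, \emph{not} $-\la/(2z^2)$) and its image $z_{2i+1}$ is the point near the pole $0$. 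Computing honestly, $|f'(z_{2i})|\sim 2|\la|e^{2\Re z_{2i}}$ and $|f'(z_{2i+1})|\sim |\la|/(2|z_{2i+1}|^2)$ with $z_{2i+1}\sim-\la e^{2z_{2i}}$, so each of your pairs is asymptotic to $e^{-2\Re z_{2i}}$, which \emph{blows up}. The product is of course independent of the grouping, so the fix is simply to shift by one: pair $f'(z_{2i+1})f'(z_{2i+2})$ for $i=0,\dots,m-1$ (each pair is then genuinely in the regime of (\ref{supattracting}) and tends to $0$, exactly as in the hybrid case of Proposition~\ref{zeroinfcycle}), and handle the single leftover factor $f'(z_0)$ with $z_0$ near $\la$ deep in the left half-plane, where $|f'(z_0)|\sim 2|\la|e^{2\Re z_0}\to 0$. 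With that correction your argument closes.

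For comparison, the paper's proof is a two-line appeal to the contraction already established in the proof of Lemma~\ref{pureuniexist}: the Koebe estimate shows $\operatorname{dist}(\la,\partial U_0)\gtrsim 4^{m-1}/|E(-2\la)\cdots E^{m}(-2\la)|$ while $f^{2m+1}(U_0)$ lies in a disk about $\la$ of radius $\lesssim |\la E^{m+1}(-2\la)|$, and the ratio tends to $0$ as $\Re\la\to-\infty$; the Schwarz lemma then forces the multiplier of the fixed point of $f^{2m+1}$ in $U_0$ to $0$. That is essentially the route you relegate to your final paragraph. Your second alternative (passing to the limit of the virtual multiplier of the limiting cycle $\{+\infty,-\infty,0,\dots\}$) is plausible but would require justifying the uniformity of the derivative estimates along the converging cycles, which is exactly the content of the direct computation, so it should not be presented as an independent shortcut.
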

\begin{proof}  We showed above that $f^{2m+1}$ contracts $U_0$ exponentially and that this contraction factor goes to zero with $\la$.  It follows that the multiplier must also go to zero.

 \end{proof}

Lemma~\ref{pureuniexist} together with this proposition prove part  $(1)$ of theorem~\ref{Main Theorem A}.

\begin{remark} It follows from construction in the proof above that components with kneading sequences $*00 \ldots k 0$ for fixed $k$ and increasing length converge to a segment in the negative real axis which we know
cannot be inside a hyperbolic component.

Note also that because the  parameter plane is symmetric about the real axis,  complex conjugation sends a component with kneading sequence $*k_1 \ldots k_{n-1}$ to one with kneading sequence $*-k_1 \ldots -k_{n-1}$.  This gives us another insight into lemma~\ref{separating}:   if there were a component with sequence $0 \ldots 0$, it would have to be  symmetric with respect to the real line and would therefore have to contain a s begment of it.  The segment cannot be in $\RR^+$ since that segment is in $\Omega_1$.   It would thus contain negative real values and, as we will see below,  these parameters are not pseudo-hyperbolic.  
\end{remark}

This theorem together with theorem~\ref{Omega1} prove that  infinity is like an ideal virtual cycle parameter.  It acts as the virtual center of    the  regular component $\Omega_1$ and of infinitely many  unipolar components of period $2n+1$, $n \in \NN$.

\subsection{Proof of theorem~\ref{Main Theorem A} part $(2)$: Regular  maps }

We restate theorem~\ref{Main Theorem A} part $(2)$ as

\begin{thm}\label{thmA} Let $\la_0$ be a virtual cycle parameter  whose address is $k_1 \ldots k_n$, $k_n \neq 0$. Then there is a regular pseudo-hyperbolic component with virtual center $\la_0$ whose attracting cycles have kneading sequence $S=*k_1 \ldots k_n$.
\end{thm}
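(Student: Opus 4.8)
Let $\la_0$ be a virtual cycle parameter with address $k_1\cdots k_n$; then $\la_0$ is a prepole of $f_{\la_0}$ of order $n$, so $P:=f_{\la_0}^{\,n-1}(\la_0)$ is a pole of $f_{\la_0}$ and $f_{\la_0}^{\,n}(\la_0)=\infty$, and, because $0$ is an omitted value, none of $\la_0,f_{\la_0}(\la_0),\dots,f_{\la_0}^{\,n-1}(\la_0)$ equals $0$; in particular $P\ne 0$ and every entry $k_i$ is nonzero, so the target kneading sequence $S=*k_1\cdots k_n$ is regular. The plan is to perturb the prepole relation, exactly in the spirit of Theorem~B of \cite{CK1} and the estimates in Lemmas~\ref{per3} and~\ref{pureuniexist}. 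For $\la$ near $\la_0$ set $w_j(\la)=f_\la^{\,j}(\la)$; for $j\le n-1$ these are holomorphic at $\la_0$, since $w_0(\la_0),\dots,w_{n-2}(\la_0)$ are regular points of $f_{\la_0}$ and $w_{n-1}(\la_0)=P$, which is a pole of $f_\la$ for \emph{every} $\la$ while $w_{n-1}$ itself stays holomorphic. The point of the construction is that for $\la$ just off $\la_0$ the orbit of $\la$ returns near $\la_0$ after $n+1$ steps: $w_{n-1}(\la)$ is close to $P$, so $w_n(\la)=f_\la(w_{n-1}(\la))$ is enormous and, for $\la$ in a suitable region, lies deep in the asymptotic tract $R$ of $\la$, whence $w_{n+1}(\la)=\la/(1-e^{-2w_n(\la)})$ is extremely close to $\la$.

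First I would carry out the local analysis near $\la_0$. Since $w_{n-1}\not\equiv P$, the estimate $f_\la(z)\sim \la/\bigl(2(z-P)\bigr)$ near $P$ from the proof of Proposition~\ref{zeroinfcycle} gives $w_n(\la)\sim c\,(\la-\la_0)^{-d}$ for some integer $d\ge 1$ and $c\ne0$. Fix a large $M>0$ and a small $\theta>0$, and let $\tilde V$ be the set of $\la$ near $\la_0$ for which $w_n(\la)$ lies in the sector at infinity $\{\,z:\Re z>M,\ |\Im z|<(\tan\theta)\,\Re z\,\}$. This is a nonempty open subset of $\CC^*$ with $\la_0\in\partial\tilde V$, shrinking to $\la_0$ as $M\to\infty$; it plays here the role the strips $\calr^{odd}_{k,M,\epsilon}$, $\calr^{even}_{k,M,\epsilon}$ play in Lemmas~\ref{per3} and~\ref{pureuniexist}. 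On $\tilde V$ one has $\Re w_n(\la)<|w_n(\la)|<(\sec\theta)\,\Re w_n(\la)$ and $\sup_{z\in U}|f_\la(z)-\la|\le 2|\la_0|\,e^{-2\Re w_n(\la)+2}$ for any $U\subseteq\{\Re z>\Re w_n(\la)-1\}$.

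Next I would run the Koebe--Schwarz argument as in Lemma~\ref{pureuniexist}. Fix $\la\in\tilde V$ with $M$ large; in the dynamic plane set $U_n=\{\Re z>\Re w_n(\la)-1\}\subset R$ and let $U_j$ be the component of $f_\la^{-1}(U_{j+1})$ containing $w_j(\la)$. Then $U_{n-1}$ is a small neighbourhood of the pole $P$, and because every $k_i\ne0$ and $0\notin\{w_0(\la),\dots,w_{n-1}(\la)\}$ the map $f_\la^{\,n}\colon U_0\to U_n$ is univalent with inverse $h$. By the Koebe $1/4$ theorem,
\[
\mathrm{dist}(\la,\partial U_0)\ \ge\ \tfrac14\,\bigl|h'(w_n(\la))\bigr|\ =\ \tfrac14\prod_{j=0}^{n-1}\frac{1}{\bigl|f_\la'(w_j(\la))\bigr|}.
\]
As $\la\to\la_0$ the $n-1$ factors with $j\le n-2$ tend to the finite nonzero numbers $|f_{\la_0}'(f_{\la_0}^{\,j}(\la_0))|^{-1}$, while $|f_\la'(w_{n-1}(\la))|^{-1}\asymp |w_{n-1}(\la)-P|^2/|\la|\asymp |\la_0|/|w_n(\la)|^2\gtrsim (\Re w_n(\la))^{-2}$, the last step using the sector bound. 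Hence $\mathrm{dist}(\la,\partial U_0)\gtrsim (\Re w_n(\la))^{-2}$, while $\sup_{z\in U_n}|f_\la(z)-\la|\le 2|\la_0|\,e^{-2\Re w_n(\la)+2}=o\bigl((\Re w_n(\la))^{-2}\bigr)$; so for $M$ large enough, uniformly over $\tilde V$, $f_\la^{\,n+1}(U_0)=f_\la(U_n)$ is compactly contained in $U_0$. By the Schwarz lemma $f_\la^{\,n+1}$ has an attracting fixed point in $U_0$, giving an attracting cycle of period $n+1$; from the construction the set $R_{k_1\cdots k_n}(\la)$ of the L-R structure contains $U_0$ and $U_0\subset A_1$, so by Definition~\ref{knseqnew} the kneading sequence is $*k_1\cdots k_n$.

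Finally I would identify $\la_0$ as the virtual center. The set $\tilde V$ is connected and contained in $\mathcal H_{n+1}$; if $\Omega$ is the component containing it, Proposition~\ref{knseqsame} shows $\Omega$ is a regular shell component with kneading sequence $*k_1\cdots k_n$. Since $f_{\la_0}^{\,n}(\la_0)=\infty$, the asymptotic value $\la_0$ is not attracted to any cycle, so $f_{\la_0}$ is not pseudo-hyperbolic and $\la_0\in\partial\Omega$; and along any path $\la(t)\to\la_0$ inside $\tilde V$ the contraction estimate gives $|\calm(\la(t))|\lesssim (\Re w_n(\la(t)))^{2}\,e^{-2\Re w_n(\la(t))}\to 0$, so $\la_0$ is a virtual center of $\Omega$, hence the unique one by Theorem~\ref{omeganprops}(\ref{dd}). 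As $\la_0$ has address $k_1\cdots k_n$, it is the virtual cycle parameter with the same address as $S$, which is the assertion. I expect the main obstacle to be the uniformity in the third step: the Koebe lower bound on $\mathrm{dist}(\la,\partial U_0)$ decays only polynomially in $\Re w_n(\la)$, so one must confine $\la$ to the sector-preimage $\tilde V$ rather than to all of $\{\Re w_n>M\}$ in order that this bound still dominate the exponentially small perturbation $|f_\la(z)-\la|$, and one must verify that $f_\la^{\,n}\colon U_0\to U_n$ is genuinely univalent, which is where the hypothesis that every $k_i$ is nonzero enters.
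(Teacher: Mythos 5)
Your proposal is correct and follows exactly the route the paper takes: the paper reduces Theorem~\ref{thmA} to Proposition~\ref{regular} and explicitly leaves its proof to the reader as ``similar to the proof of theorem~\ref{pureuniexist}'', and your argument is precisely that adaptation --- perturb $\la$ so that $f_\la^{\,n}(\la)$ lands deep in the asymptotic tract $R$, then run the Koebe $1/4$ plus Schwarz lemma contraction estimate to produce the period-$(n+1)$ attracting cycle, read off the kneading sequence from the L-R structure, and let the vanishing multiplier identify $\la_0$ as the virtual center. The only difference is that you supply the uniformity details (the sector restriction balancing the polynomial Koebe bound against the exponential perturbation) that the paper omits.
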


The theorem follows from the more technical proposition below.

In the L-R structure of $f_{\la_0}$, let $U=R_{k_1 \ldots k_n}$ be the pullback of the asymptotic tract of $\la_0$ to $\la_0$.  Let $i: N \rightarrow \CC$ be the identity map from a neighborhood  $N$ of $\la_0$ in the dynamic plane of $f_{\la_0}$ to the parameter plane.  Let $R_M$ be a right half-plane contained in the asymptotic tract of $\la_0$.

Because $f_\lambda^n(\lambda)$ is holomorphic in $\la$ and $f_{\lambda_0}^n(\lambda_0)=\infty$, there exists a subset $\widetilde{V}\subset i(U)$ with $\lambda_0$ on the boundary such that,  for $\la \in \widetilde{V}$,  $f_\lambda^n(\lambda)$ is in a right half-plane $R_{M'}$ for some $M'>0$ close to $M$.
The  point is that  $\widetilde{V}$ is contained in a regular pseudo-hyperbolic component.

For any $\la$, define  $w_0=\la$, $w_i(\la)=f_{\la}^i(\la)$, $i=1, \ldots, n$, as points in the dynamic plane of $f_{\la}$.

\begin{prop}\label{regular} Let $\la_0$ be a virtual cycle parameter $\la_0$ whose address is $k_1 \ldots k_n$.  In the L-R structure of $f_{\la_0}$, let $R_{k_1 \ldots k_n}$ be the pullback of the asymptotic tract of $\la_0$ to $\la_0$ and identify it with a set $\widetilde{V}$ in the parameter plane.   Then
there exists a $V\subset \widetilde{V}$ such that for any $\lambda\in V$, the map $f_\lambda$ has an attracting cycle of period $n+1$.  Moreover, as $\la \to \la_0$ in $V$, the multiplier of the cycle tends to zero.
\end{prop}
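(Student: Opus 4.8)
The plan is to run a perturbative Koebe--Schwarz argument of exactly the same flavor as the unipolar construction of Lemma~\ref{pureuniexist} (and as in the proof of Theorem~B of \cite{CK1}), but considerably simpler, since here the orbit of $\la$ visits a neighborhood of $\infty$ only once. Since $\la_0$ is a prepole of order $n$ with address $k_1\cdots k_n$, we have $f_{\la_0}^{n-1}(\la_0)=k_n\pi i$, a pole of $f_\la$ for \emph{every} $\la$; hence the holomorphic function $\la\mapsto w_n(\la):=f_\la^n(\la)$ has a pole at $\la_0$ and maps punctured neighborhoods of $\la_0$ onto punctured neighborhoods of $\infty$. A short computation gives $w_n\sim\la/\bigl(2(f_\la^{n-1}(\la)-k_n\pi i)\bigr)$, so $\Re w_n$ need not tend to $+\infty$ as $\la\to\la_0$ inside $\widetilde V$: it may instead run off with bounded real part and $|\Im w_n|\to\infty$. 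To control this I would first replace $\widetilde V$ by
$$
V=\{\la\in\widetilde V:\ \Re w_n(\la)>M'',\ \Re w_n(\la)>|\Im w_n(\la)|\},
$$
for a large constant $M''\ge M$. Because $w_n$ has a pole at $\la_0$, this $V$ is nonempty and has $\la_0$ on its boundary (it corresponds to approaching $\la_0$ from a sector of directions), and on $V$ one has $|w_n|^2e^{-2\Re w_n}\to 0$ as $\la\to\la_0$ — this is the inequality that drives the whole argument.

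Now fix $\la\in V$ and work in the dynamic plane of $f_\la$. Set $U_n=\{z:\Re z>\Re w_n-1\}$ and pull it back along the orbit: let $U_{n-1}$ be the component of $f_\la^{-1}(U_n)$ at $w_{n-1}=f_\la^{n-1}(\la)$, which lies near the nonzero pole $k_n\pi i$ where $f_\la$ is a local homeomorphism; then $U_{n-2},\dots,U_1$ the pullbacks at the finite non-polar points $w_{n-2},\dots,w_1$ (close to the corresponding points of the orbit of $\la_0$); and finally $U_0$ the pullback at $w_0=\la$, using that $\la$ is an ordinary simple preimage of $w_1$ under $f_\la$. For $\la$ close enough to $\la_0$ each $U_i$ with $i<n$ is a small neighborhood of $w_i$, the $U_i$, $i=0,\dots,n$, are pairwise disjoint (the orbit $\la_0,f_{\la_0}(\la_0),\dots$ is injective and ends at $\infty$, while $U_n$ is a far-right half-plane), and $f_\la^n\colon U_0\to U_n$ is conformal. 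Let $h=(f_\la^n|_{U_0})^{-1}$, so $h(w_n)=\la$. Applying the Koebe $1/4$ theorem to $h$ on the disk $D(w_n,1)\subset U_n$ gives $D(\la,\tfrac14|h'(w_n)|)\subset U_0$, and $|h'(w_n)|=\prod_{i=0}^{n-1}|f_\la'(w_i)|^{-1}$. The factors with $i\le n-2$ are bounded above and below by positive constants depending only on $\la_0$; for $i=n-1$ the near-pole estimate from the proof of Proposition~\ref{zeroinfcycle} (namely $f_\la'(z)\sim-\la/(2(z-k_n\pi i)^2)$ together with $w_n\sim\la/(2(w_{n-1}-k_n\pi i))$) yields $f_\la'(w_{n-1})\sim-2w_n^2/\la$. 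Hence there is a constant $c_0=c_0(\la_0)>0$ with $\operatorname{dist}(\la,\partial U_0)\ge\tfrac14|h'(w_n)|\ge c_0/|w_n|^2$.

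Next I would estimate the image exactly as in the proof of Lemma~\ref{pureuniexist}: for $z\in U_n$, $|f_\la(z)-\la|=|\la|\,|e^{-2z}|/|1-e^{-2z}|\le 2|\la|e^{-2\Re z}\le 2e^2|\la|e^{-2\Re w_n}$, so $f_\la^{n+1}(U_0)=f_\la(U_n)\subset D(\la,\rho)$ with $\rho=2e^2|\la|e^{-2\Re w_n}$. Since $|w_n|^2\le 2(\Re w_n)^2$ on $V$, for $M''$ large enough $\rho<c_0/|w_n|^2\le\operatorname{dist}(\la,\partial U_0)$, whence $f_\la^{n+1}(U_0)\Subset U_0$. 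By the Schwarz lemma $f_\la^{n+1}|_{U_0}$ has a unique attracting fixed point $z^*\in U_0$; since $U_0,\dots,U_n$ are pairwise disjoint the cycle $z^*,f_\la(z^*),\dots,f_\la^n(z^*)$ has exact period $n+1$ and attracts $\la\in U_0$. Thus every $\la\in V$ is pseudo-hyperbolic of period $n+1$, and since in the dynamic plane $\la$ lies in the L-R set $R_{k_1\cdots k_n}(\la)$, Definition~\ref{knseqnew} together with Lemma~\ref{regseq} identify its kneading sequence as the regular sequence with address $k_1\cdots k_n$; so $V$ lies in a single regular component $\Omega$. Finally, for the multiplier: $z^*\in f_\la^{n+1}(U_0)\subset D(\la,\rho)$, so applying the Schwarz lemma to $f_\la^{n+1}$ on $D(z^*,\operatorname{dist}(\la,\partial U_0)-\rho)$ (which it maps into $D(z^*,2\rho)$) gives $|m(\mathbf z)|\le 2\rho/(\operatorname{dist}(\la,\partial U_0)-\rho)$, which tends to $0$ as $\la\to\la_0$ in $V$ because $\rho$ decays exponentially in $\Re w_n$ while $c_0/|w_n|^2$ decays only polynomially. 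Theorem~\ref{thmA} then follows at once: $\la_0\in\partial\Omega$, and by Theorem~\ref{omeganprops}\,(\ref{dd}) the component $\Omega$ has a unique virtual center, which the path in $V$ just produced identifies as $\la_0$.

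I expect the main obstacle to be the bookkeeping that makes $f_\la^n\colon U_0\to U_n$ conformal uniformly for $\la$ near $\la_0$: one must check that the orbit $\la,f_\la(\la),\dots,f_\la^{n-1}(\la)$ stays away from the two asymptotic values $0$ and $\la$ and from the poles except at the very last step (it does, because $0$ is omitted and the orbit of $\la_0$ is that of a prepole of order $n$, while $\la$ is itself a genuine preimage of $f_\la(\la)$), and one must control the distortion of the $n-1$ intermediate pullbacks so that the single exponentially strong contraction occurring at the pole $k_n\pi i$ dominates the bounded distortion of the remaining steps. This is routine but has to be done carefully; once it is in place, every estimate above is a direct transcription of computations already carried out in the unipolar case.
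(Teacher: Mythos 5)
Your proposal is correct and is exactly the argument the paper intends: the authors explicitly leave this proof to the reader, stating it is "similar to the proof of Lemma~\ref{pureuniexist}," and your write-up is precisely that Koebe--Schwarz adaptation (lower bound on $\operatorname{dist}(\la,\partial U_0)$ via Koebe $1/4$ and the near-pole derivative asymptotics, upper bound on $f_\la(U_n)$, then the Schwarz lemma). Your additional restriction to the sector $\Re w_n>|\Im w_n|$, which guarantees $|w_n|^2e^{-2\Re w_n}\to 0$, is a legitimate refinement of the paper's choice of $\widetilde V$ rather than a departure from its method.
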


The proof is similar to the proof of theorem~\ref{pureuniexist}  so we leave it to the reader to carry out.

\subsection{Proof of theorem~\ref{Main Theorem A} part $(3)$: Hybrid maps}
Finally we prove that in a neighborhood of a virtual cycle parameter $\la_0$ whose address is $k_1 \ldots k_l$,  $k_l \neq 0$,    for all $j \in \NN$  and all choices of $k_{l+1 }, \ldots k_{l+2j-1}$, with  $k_{l+1} \neq 0$, $k_{l+2}=0$, there are   hybrid   components with attracting cycles of period $n=l+2j+1$ whose kneading sequences are $S(\la)=*k_1 \dots k_l  k_{l+1} 0 \cdots  k_{l+2j-1}0$.

  As in part $(1)$ we have a virtual cycle parameter to use as  starting point in the parameter plane.   We look at the  L-R structure of $f_{\la_0}$ and  let $N$ be a neighborhood of $\la_0$ in the dynamic plane.  Now, however,  let $\widetilde{U}=L_{k_1 \ldots k_n}$ be the pullback of the asymptotic tract of $0$ to $\la_0$;  and let $i:N \rightarrow \CC$ be the identity map into the parameter plane;  set  $\widetilde{V}=i(\widetilde{U})$ in the parameter plane.  As above,   $w(\la)=f_{\la}(\la)$ is  holomorphic and the points
 $w_0=\la$ and  $w_i(\la)=f_{\la}^i(\la)$, $i=1, \ldots, n$ are  in the dynamic plane of $f_{\la}$.

Given $M>>0$, there exists a sector $V\subset \widetilde{V}$ at $\lambda_0$ such that for $\la \in V$,  $\Re w_n<-M$. Then

$$w_{l+1}=\frac{\lambda}{1-e^{-2w_{l}}}\simeq -\frac{\lambda}{e^{-2w_l}}\simeq 0$$  and
$$w_{l+2}=\frac{\lambda}{1-e^{-2w_{l+1}}}\simeq \frac{\lambda}{2w_{l+1}}\simeq -\frac{e^{-2w_{l}}}{2}.$$

Applying  these inequalities inductively as we did in the proofs of the two theorems above gives a  proof of   the existence part of part $(3)$ of theorem~\ref{Main Theorem A}.   That is,
\begin{thm}\label{6.7} For any $j \in \mathbb{N},$ there exists a sector $V\subset \widetilde{V}$  such that for any $\lambda\in V$,  $f_\lambda$ has an attracting cycle of period $n=l+2j+1$. Moreover, for any given sequence $k_1, \cdots, k_{l} \in \ZZ$,  for $i=l+1, \cdots, l+2j-1$,  $E^i(-2w_{n})\in \calr^{even}_{k_i, M,\epsilon}$,  and $E^{l+2j }(-2w_n)\in \calr^{odd}_{k_{j-1},M, \epsilon}$.
\end{thm}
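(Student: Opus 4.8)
The plan is to prove Theorem~\ref{6.7} by transcribing the argument of Lemma~\ref{pureuniexist}, with the regular virtual cycle parameter $\la_0$ of address $k_1\cdots k_l$ taking over the role that the point at infinity played there; in effect part~$(3)$ is the common refinement of parts~$(1)$ and~$(2)$ of Theorem~\ref{Main Theorem A}. First I would record the local structure near $\la_0$: since $\la_0$ is a prepole of order $l$ with $f_{\la_0}^{\,l-1}(\la_0)=k_l\pi i$ and the virtual cycle is regular (of minimal length), the iterates $w_0(\la)=\la,\, w_1(\la),\dots,w_{l-1}(\la)$ depend holomorphically on $\la$ in a neighbourhood of $\la_0$, stay in a fixed compact set bounded away from the poles, and $w_l(\la)=f_\la(w_{l-1}(\la))\to\infty$ as $\la\to\la_0$. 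Because the parameter-plane set $\widetilde V$ is the copy of the pullback $L_{k_1\cdots k_l}$ of the asymptotic tract $L$ of $0$, for any prescribed $M>0$ one may choose a sub-sector $V\subset\widetilde V$ with $\la_0$ on its boundary so that $\Re w_l(\la)<-M$ for every $\la\in V$; this is the exact analogue of the starting condition $\Re\la<-M/2$ in the unipolar case, the only difference being that the orbit first plunges into the half-plane $\{\Re z<-M\}$ at step $l$ rather than at step $0$.

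Next I would run the exponential cascade exactly as in Lemma~\ref{pureuniexist}, using estimate~\eqref{basic estimate}. From $\Re w_l<-M$ we get $w_{l+1}\simeq-\la e^{2w_l}\simeq0$ and $w_{l+2}\simeq-e^{-2w_l}/2=-E(-2w_l)/2$, and inductively $w_{l+2i-1}\simeq0$, $w_{l+2i}\simeq-E^{i}(-2w_l)/2$, so that $-2w_{l+2i}\simeq E^{i}(-2w_l)$. Shrinking $V$ finitely many times, one arranges that for $i=1,\dots,j-1$ the image $E^{i}(-2w_l)$ lies in a strip $\calr^{even}_{\,\cdot\,,M,\epsilon}$ centred on an even multiple of $\pi$ --- this both keeps $w_{l+2i}$ deep in the left half-plane and installs the free digit $k_{l+2i-1}$ into the index --- while $E^{j}(-2w_l)$ is forced into a strip $\calr^{odd}_{\,\cdot\,,M,\epsilon}$, which swings $w_{l+2j}$ far into the right half-plane so that $w_n=w_{l+2j+1}=f_\la(w_{l+2j})$ lands in the asymptotic tract $R$ of $\la$ inside $A_0$. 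The resulting sector $V$ is the set of candidate parameters with period $n=l+2j+1$ and kneading sequence $*k_1\cdots k_l k_{l+1}0\cdots k_{n-2}0$.

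To promote a candidate to an actual attracting cycle I would repeat the Koebe--Schwarz step of Lemmas~\ref{per3} and~\ref{pureuniexist}. Fixing $\la\in V$, set $U_n=\{\Re z>\Re w_n-1\}$ and pull $U_n$ back along the orbit of $\la$ to obtain pairwise disjoint sets $U_{n-i}\ni w_i$ (the odd-indexed ones with $0$ on their boundary, the others unbounded to the left); since $f_\la^{\,n}\colon U_0\to U_n$ is conformal, Koebe's $\tfrac14$-theorem gives $\operatorname{dist}(\la,\partial U_0)\gtrsim 4^{\,j-1}\big/\bigl|E(-2w_l)\cdots E^{j}(-2w_l)\bigr|$, where the $l$ derivative factors coming from $w_0,\dots,w_{l-1}$ contribute only a bounded, bounded-away-from-zero constant because that initial segment lies in a fixed compact set. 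Combining this with the bound $\operatorname{diam}f_\la(U_n)\lesssim\bigl|\la E^{j+1}(-2w_l)\bigr|$ and taking $M$ large forces $f_\la^{\,n}(U_0)\Subset U_0$, so the Schwarz lemma yields an attracting cycle of period $n$; since the multiplier is the same derivative product, it is dominated by the exponentially small Koebe factor and hence tends to $0$ as $\la\to\la_0$, identifying $\la_0$ as the virtual center, exactly as in Proposition~\ref{multtozero} and Proposition~\ref{cycbehav}(3b). Finally, as in those lemmas one checks that $U_0$ contains the L-R set $R_{k_1\cdots k_l k_{l+1}0\cdots k_{n-2}0}(\la)$, that these sets intersect over $V$, and that the intersection is an open subset of the parameter plane lying in a hybrid pseudo-hyperbolic component with the asserted kneading sequence.

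The hardest part will be the combinatorial bookkeeping rather than the analysis: one has to check that the strip conditions $E^{i}(-2w_l)\in\calr^{even/odd}$ translate precisely into the claimed L-R index --- that the alternating zeros and the free digits $k_{l+1},\dots,k_{n-2}$ occupy the correct slots of $*k_1\cdots k_{n-1}$, with the orientation conventions of Remark~\ref{RLorder} respected --- and one has to control the seam between the regular initial segment $w_0,\dots,w_{l-1}$ (governed by Proposition~\ref{regular}) and the cascade starting at $w_l$, verifying in particular that the finitely many nested shrinkings of $V$ leave $\la_0$ on $\partial V$ and $V$ inside $\widetilde V$. Everything else is a line-by-line adaptation of the proofs of parts~$(1)$ and~$(2)$.
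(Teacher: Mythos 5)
Your proposal is correct and follows essentially the same route as the paper: the paper's own proof of Theorem~\ref{6.7} consists precisely of setting $\widetilde V$ to be the parameter-plane copy of the pullback $L_{k_1\cdots k_l}$ of the asymptotic tract of $0$ at the virtual cycle parameter $\la_0$, choosing a sector where $\Re w_l<-M$, and then ``applying these inequalities inductively as we did in the proofs of the two theorems above,'' i.e.\ rerunning the exponential cascade and Koebe--Schwarz argument of Lemma~\ref{pureuniexist} with the bounded initial segment $w_0,\dots,w_{l-1}$ contributing only bounded derivative factors. Your write-up in fact supplies more of the details (the seam between the regular initial segment and the cascade, and the multiplier estimate as in Proposition~\ref{multtozero}) than the paper's very terse version, but the underlying argument is identical.
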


It is clear from the construction that the virtual cycle parameter $\la_0$ is on the boundary of $V$. Let  $\Omega$ be the pseudo-hyperbolic component with kneading sequence $S$  constructed above.   To conclude the proof of theorem~\ref{Main Theorem A} we need to prove $\la_0$ is a virtual center.

\begin{prop}  Suppose $\{\la_m\} \in \Omega $, with  $  \la_m \to \la_0$ and let $\calm(\mathbf{z^m})$ be the multipler of the attracting cycle of of $f_{\la_m}$ of period $n+2j+1$.   Then $\lim_{m \to \infty}  \calm(\mathbf{z^m}) =0$.
\end{prop}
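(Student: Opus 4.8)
The plan is to mimic the proof of Proposition~\ref{multtozero}: to show that, as $\la_m\to\la_0$, the first-return map of the immediate basin of the attracting cycle contracts an arbitrarily large conformal disk onto an arbitrarily small one, so that its multiplier is forced to $0$. The one new feature compared with the unipolar case is that here $\la_0$ is a finite prepole rather than $\infty$; the role played there by ``$\la\to\infty$'' is now played by ``$\Re f_{\la}^{\,l}(\la)\to-\infty$'', which does occur, since $\la\in\widetilde V=i(L_{k_1\cdots k_l})$ forces $f_{\la}^{\,l}(\la)\in L_M$ while $f_{\la_0}^{\,l}(\la_0)=\infty$. (Here $l$ is the order of the prepole $\la_0$ and $N=l+2j+1$ is the period of the cycle; the escaping structure of the cycle near $\la_0$ is the one described in Proposition~\ref{cycbehav}(3b).)

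Concretely, exactly as in the proofs of Lemma~\ref{pureuniexist} and Proposition~\ref{regular}, I would pull a right half-plane backwards around the cycle to obtain, for each $\la\in\Omega$ near $\la_0$, a region $U_0=U_0(\la)$ on which $f_{\la}^{\,N}\colon U_0\to U_N$ is a conformal homeomorphism onto a half-plane, with $f_{\la}^{\,N}(U_0)\Subset U_0$ and with the attracting periodic point $z^{*}=z^{*}(\la)$ lying in $U_0$ --- indeed in $f_{\la}^{\,N}(U_0)$, since $z^{*}=f_{\la}^{\,N}(z^{*})$. The multiplier of the cycle equals $(f_{\la}^{\,N})'(z^{*})$. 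The Koebe $\tfrac14$ estimates already carried out in those proofs give, up to fixed multiplicative constants, a lower bound $Q_1(\la)$ for $\mathrm{dist}(z^{*},\partial U_0)$ --- a power of $4$ divided by a product of iterated exponentials of $-2f_{\la}^{\,l}(\la)$ --- and an upper bound $Q_2(\la)$ for $\mathrm{diam}\,f_{\la}^{\,N}(U_0)$ --- a fixed constant times $|\la|$ times a further iterated exponential carrying a minus sign in its outermost exponent. Since $\Re f_{\la_m}^{\,l}(\la_m)\to-\infty$, these bounds yield $\varepsilon(\la):=Q_2(\la)/Q_1(\la)\to 0$ as $\la\to\la_0$.

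To conclude, conjugate $f_{\la}^{\,N}|_{U_0}$ by the affine map $z\mapsto(z-z^{*})/Q_1(\la)$: this fixes the origin, preserves the multiplier, carries $U_0$ to a domain containing the unit disk $\DD$, and carries $f_{\la}^{\,N}(U_0)$ into the ball of radius $\varepsilon(\la)$ about $0$. Applying the Schwarz lemma to $\varepsilon(\la)^{-1}$ times the rescaled map gives $|(f_{\la}^{\,N})'(z^{*})|\le\varepsilon(\la)$, and hence $\calm(\mathbf z^{m})\to0$. The step requiring genuine care --- the main obstacle --- is the verification that $\varepsilon(\la)\to0$, i.e.\ the bookkeeping of how the nested exponentials in $Q_1$ and $Q_2$ behave as $\Re f_{\la}^{\,l}(\la)\to-\infty$; this is of the same nature as the estimates already made in the proofs of Lemma~\ref{pureuniexist} and Theorem~\ref{6.7}, and is consistent with the super-attracting-type bound~(\ref{supattracting}).
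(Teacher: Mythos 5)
Your proposal is correct and follows essentially the same route as the paper, which simply states that "this is the analogue of Proposition~\ref{multtozero} and the proof is the same," i.e.\ the Koebe--Schwarz contraction estimates from the existence construction force the multiplier to zero. Your write-up in fact supplies more detail than the paper does, correctly identifying that the divergence $\Re f_{\la}^{\,l}(\la)\to-\infty$ plays the role that $\Re\la\to-\infty$ played in the unipolar case.
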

 This is  the analogue of proposition~\ref{multtozero} and the proof is the same.
\subsection{Escaping parameters}

Recall the definition of the zero cycle  $\{0,-\infty\}$ from  section~\ref{defzeroinfcycle}.
As we saw in proposition~\ref{negescape},  if  $\lambda <0$ it is attracted to this virtual cycle.  Such a $\la$,  however, cannot  be pseudo-hyperbolic and thus belongs to the bifurcation locus.   As we see below, there are other parameters attracted to the zero virtual cycle.

\begin{defn}\label{esc} If $f_{\la}^{2}(\la)$ approaches the zero virtual cycle in the sense that $f^{2n}_{\la}(\la) \to 0$ and $\Re f^{2n+1}_{\la}(\la) \to -\infty$ as $n \to \infty$ (or with odd and even iterates interchanged), we call $\la$ an {\em escaping parameter}.
\end{defn}

In  section~\ref{puremaps},   we found parameters with attractive periodic cycles that have a given kneading sequence.  In these cycles, the orbit of $\la$ alternated between the left half plane and a neighborhood of zero until, at the last step, it landed in the right half plane.    If $\la$ is escaping,  none of the points of its orbit ever get to the right half plane.   In the notation used to construct the cycles, we thus have 
  a proof of the following criterion for a parameter to be escaping.
  \begin{prop}\label{escpaths}
 If, given  $\lambda$ in the parameter plane,  there exists an $n$ such that $E^{i}(-2w_n) \in \mathcal{R}^{even}_{k, M, \epsilon}$ for all $i=0, 1, \cdots$, then $w_{n+2i+1}(\lambda)\to 0$ and $\Re w_{n+2i}\to -\infty$;  that is, $\la$ is escaping.
 \end{prop}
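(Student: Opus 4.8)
The statement is essentially the \emph{escaping} shadow of the construction in Section~\ref{puremaps}: the plan is to re-run the orbit analysis from the proof of Lemma~\ref{pureuniexist} almost verbatim, the only difference being that now the forward orbit of $\la$ never lands in the right half-plane $R$ (the asymptotic tract of $\la$), so instead of eventually closing up into an attracting cycle it keeps shuttling forever between a neighborhood of $0$ and the far left half-plane. Concretely, write $w_j=f_\la^j(\la)$ and set $\zeta_i=E^i(-2w_n)$, so that $\zeta_0=-2w_n$ and $\zeta_{i+1}=e^{\zeta_i}$; the hypothesis places each $\zeta_i$ in one of the strips $\mathcal{R}^{even}_{k,M,\epsilon}$.

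The first step is to extract from the hypothesis the growth of the $\zeta_i$. Exactly as in the proof of Lemma~\ref{pureuniexist}, for $M$ large and $\epsilon$ small the image $E(\mathcal{R}^{even}_{k,M,\epsilon})$ lies in the sector $I=\{\,|z|>e^{M},\ |\arg z|<\epsilon\,\}$; hence for $i\ge 1$ we have $\zeta_i=e^{\zeta_{i-1}}\in I$, so $\Re\zeta_i\ge |\zeta_i|\cos\epsilon=e^{\Re\zeta_{i-1}}\cos\epsilon$. Starting from $\Re\zeta_0=\Re(-2w_n)>M$ and using $e^{M}\cos\epsilon>M$, this recursion forces $\Re\zeta_i$ to grow like a tower of exponentials; in particular $\Re\zeta_i\to+\infty$ and $|\zeta_i|\to\infty$.

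The core is then a straightforward induction, using only the basic estimate~(\ref{basic estimate}), showing that
\[
w_{n+2i}\approx -\tfrac12\,\zeta_i \qquad\text{and}\qquad w_{n+2i+1}\approx -\la/\zeta_{i+1}.
\]
The inductive step is just the two-line computation from the construction: since $\Re w_{n+2i}\ll 0$, one gets $w_{n+2i+1}=\la/(1-e^{-2w_{n+2i}})\approx -\la e^{2w_{n+2i}}\approx -\la/\zeta_{i+1}$, which tends to $0$ because $|\zeta_{i+1}|\to\infty$; then, $w_{n+2i+1}$ being tiny, $1-e^{-2w_{n+2i+1}}\approx 2w_{n+2i+1}$ gives $w_{n+2i+2}\approx \la/(2w_{n+2i+1})\approx -\zeta_{i+1}/2$, which once more lies far out in the left half-plane because $\Re\zeta_{i+1}\gg 0$. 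Granting this, $\Re w_{n+2i}\to-\infty$ and $w_{n+2i+1}\to0$, which is precisely one of the two alternatives of Definition~\ref{esc} (according to the parity of $n$), so $\la$ is escaping.

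The one point requiring a little care is the \emph{uniform} control of the multiplicative errors hidden in the symbol $\approx$ along the whole infinite orbit, rather than at a single step. This is routine: the error in $w_{n+2i}\approx -\zeta_i/2$ is governed by $|e^{2w_{n+2i}}|=e^{2\Re w_{n+2i}}$ and the error in $w_{n+2i+1}\approx -\la/\zeta_{i+1}$ by $|w_{n+2i+1}|$, and by the growth estimate above both are at worst doubly-exponentially small in $i$, hence summable, so they do not accumulate once $M$ is chosen large. Alternatively, one can sidestep the bookkeeping entirely by simply invoking the estimates established in the proof of Lemma~\ref{pureuniexist} and observing that, under the present hypothesis, they are never interrupted by the orbit entering $R$.
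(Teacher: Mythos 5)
Your proposal is correct and takes essentially the same route as the paper, which gives no separate argument but simply observes that the orbit estimates from the construction of the unipolar components (Lemma~\ref{pureuniexist}) apply verbatim when the orbit never enters the right half-plane. Your explicit treatment of the tower-exponential growth of $\Re\,E^i(-2w_n)$ and of the non-accumulation of the multiplicative errors along the infinite orbit fills in details the paper leaves implicit, but it is the same proof.
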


 Escaping exponential maps have been thoroughly investigated in \cite{FRS}; in particular, it was shown that  the set of escaping parameters consists of uncountably many disjoint curves in $\mathbb{C}$ tending to infinity, (see Theorem 1 in \cite{FRS}). The above proposition indicates that the set of escaping parameters of $\calfp_2$ also contains infinitely many such paths that extend  $\infty$ as well as sets of infinitely many disjoint curves tending to each virtual center parameter.  We can characterize these escaping parameters by assigning them infinite kneading sequences.  For example, by proposition~\ref{negescape}, the kneading sequence of parameters  in $\RR^-$ is $*0  \ldots 0 \ldots $.

\begin{prop}  If, given  $\lambda$ in the parameter plane, $S(\la)$ is an infinite kneading sequence $S=*k_1k_2 \ldots k_n 0 k_{n+2}0k_{n+4}0 \ldots k_{n+2j}0 \ldots$,     then  $\lambda$ is an escaping parameter.
\end{prop}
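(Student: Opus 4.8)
The plan is to show that an eventually--unipolar infinite kneading sequence forces the orbit of $\lambda$ to enter, from some time on, exactly the configuration analyzed in the proof of Lemma~\ref{pureuniexist}, and then to quote Proposition~\ref{escpaths}. Write $w_0=\lambda$ and $w_m=f_\lambda^m(\lambda)$. By the way an infinite kneading sequence is assigned to a parameter --- the digit in position $m$ records which $L$- or $R$-region of the L-R structure of $f_\lambda$ the point $w_m$ visits --- the hypothesis that $S(\lambda)=*k_1\cdots k_n\,0\,k_{n+2}\,0\,k_{n+4}\,0\cdots$ says that after the initial segment the orbit alternates between a region $L_{k_1\cdots k_j 0}$ (a deep piece of the left half-plane, the asymptotic tract of $0$) and a region abutting $0$. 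Concretely this means there is an $N\ge n$ with $\Re w_N\ll 0$ and, for every $i\ge 0$, that $-2w_{N+2i}$ lies in one of the vertical strips $\mathcal R^{even}_{k_{N+2i},M,\epsilon}$ centered on an even multiple of $\pi$.

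The key steps are then: (i) extract from the symbolic data a genuine analytic anchor; the initial block $*k_1\cdots k_n$ together with finitely many of the subsequent unipolar blocks $0k_{n+2j}$ places $w_N$ inside the nested left-half-plane region $L_{k_1\cdots k_N}$, and since the sequence has infinitely many such blocks, $N$ (hence $|\Re w_N|$) can be taken as large as we like, so we may assume $\Re w_N<-M$ for the constant $M$ of Section~\ref{puremaps}. (ii) Run the $E$-dynamics: by~(\ref{basic estimate}), $w_{N+1}=\lambda/(1-e^{-2w_N})\simeq 0$ and $w_{N+2}\simeq -e^{-2w_N}/2=-E(-2w_N)/2$; because the digit $k_{N+2}$ is of ``even'' type, $-2w_N\in\mathcal R^{even}_{k_{N+2},M,\epsilon}$, so $E(-2w_N)$ lies in the sector $I$ at infinity in the right half-plane (exactly the observation opening the proof of Lemma~\ref{pureuniexist}), whence $\Re w_{N+2}$ is even more negative than $\Re w_N$. (iii) Iterate: the same computation gives $w_{N+2i+1}\simeq 0$ and $w_{N+2i}\simeq -E^i(-2w_N)/2$ with $E^i(-2w_N)\in I$ for all $i$, so $\Re w_{N+2i}\to-\infty$ and $w_{N+2i+1}\to 0$. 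This is precisely the hypothesis and conclusion of Proposition~\ref{escpaths}, so $\lambda$ satisfies Definition~\ref{esc} (with the roles of odd and even iterates as dictated by the parity of $N$) and is an escaping parameter. One small point: Proposition~\ref{escpaths} is stated for a single strip index, but the only property used is that every $\mathcal R^{even}_{k,M,\epsilon}$ has image contained in the common sector $I$, so the argument applies verbatim with the indices $k_{N+2i}$ varying.

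The main obstacle is step (i): bridging the combinatorial hypothesis ``$S(\lambda)$ has this form'' and the quantitative statement ``$\Re w_N<-M$ and $-2w_{N+2i}\in\mathcal R^{even}_{k_{N+2i},M,\epsilon}$''. One must check that the conventions assigning an infinite kneading sequence to a (non--pseudo-hyperbolic) parameter are consistent with the orbit genuinely lying in the corresponding nested $L$- and $R$-regions, and that ``$w_m$ near $0$'' can be upgraded from the asymptotic relations in~(\ref{basic estimate}) to honest inequalities. This works here because the escape is self-reinforcing: once one orbit point is far enough to the left, each approximation in~(\ref{basic estimate}) holds with room to spare, so the next even iterate is strictly further left and the strip memberships persist. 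Everything after that anchor is the same exponential bookkeeping as in Lemma~\ref{pureuniexist}.
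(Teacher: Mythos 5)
Your proposal is correct and follows essentially the same route as the paper, whose entire proof is the one-line observation that the infinite kneading sequence places $\lambda$ in the nested sets $L_{k_1\cdots k_n 0 k_{n+2}0\cdots k_{n+2j}0}$ for every $j$ ``so that $\lambda$ must be escaping''; your steps (ii)--(iii) simply unpack that last clause via Proposition~\ref{escpaths} and the exponential bookkeeping of Lemma~\ref{pureuniexist}. The only wobble is the justification of the anchor in step (i) --- the length of the index does not by itself make $|\Re w_N|$ large --- but the needed inequality is immediate from the combinatorics, since $w_{n+2j+1}\in L_{k_{n+2j+2}0\cdots}\subset L_M$ gives $\Re w_{n+2j+1}<-M$ for every $j$, after which your self-reinforcing exponential estimate takes over.
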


\begin{proof} From its kneading sequence we see that  $\la \in L_{k_1k_2 \ldots k_n 0 k_{n+2}0k_{n+4}0 \ldots k_{n+2j}0\ldots}$ for all $j$ so that $\la$ must be escaping.
\end{proof}

These propositions imply

\begin{thm}
If $\la$ is an escaping parameter, and if the entries in its infinite kneading sequence are bounded, then $f_\lambda$ is  not structurally stable.
\end{thm}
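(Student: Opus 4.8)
The plan is to show that the escaping parameter $\lambda$, although $f_\lambda$ carries no attracting cycle, is a limit of parameters that do, and then to derive a contradiction from the invariance of ``having an attracting cycle'' under topological conjugacy. The first point is immediate: since $\lambda$ is escaping, $\Re f_\lambda^{2n+1}(\lambda)\to-\infty$, so the forward orbit of $\lambda$ is unbounded and cannot converge to a finite periodic cycle; as every attracting cycle of a map in $\calfp_2$ must attract a singular value and the polar asymptotic value $0$ lies in the Julia set (hence is attracted to no attracting cycle), an attracting cycle of $f_\lambda$ would be forced to attract $\lambda$, which is impossible. Thus $f_\lambda$ has no attracting cycle, i.e.\ $\lambda\notin\bigcup_n\mathcal H_n$.

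The technical heart is to prove $\lambda\in\overline{\bigcup_n\mathcal H_n}$. Write $S(\lambda)=*k_1\cdots k_m\,0\,k_{m+2}\,0\,k_{m+4}\,0\cdots$ with $|k_i|\le N_0$ for all $i$. For each large even $j$ I would truncate $S(\lambda)$ just after the entry indexed $m+2j$, append a final $0$, and --- if the resulting finite word is not already allowable --- replace its last nonzero entry by a fixed nonzero integer, obtaining an allowable unipolar or hybrid sequence $S_j$. By Theorem~\ref{Main Theorem A} there is a pseudo-hyperbolic component $\Omega_j$ with kneading sequence $S_j$, and the proofs of parts (1) and (3) of that theorem (Section~\ref{existence}) place inside $\Omega_j$ the parameter-plane trace of the set $R_{k_1\cdots k_m 0\cdots k_{m+2j}0}$ of the L-R structure of $f_\lambda$. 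Now $\lambda$ lies in each of the nested pullbacks $L_{k_1\cdots k_m 0\cdots k_{m+2j}0}$ of the asymptotic tract of $0$, while $R_{k_1\cdots k_m 0\cdots k_{m+2j}0}$ sits inside $L_{k_1\cdots k_m 0\cdots k_{m+2(j-1)}0}$ against part of its boundary; since the Koebe and Schwarz estimates of Section~\ref{existence} that build these regions are uniform once the kneading entries are bounded by $N_0$, the regions, and hence points of $\Omega_j$, come arbitrarily close to $\lambda$ as $j\to\infty$. I expect this to be the main obstacle: boundedness of the entries is exactly what keeps the strip constructions of Section~\ref{existence} from degenerating along the truncations, and the purely polar case $S(\lambda)=*0\,0\,0\cdots$ needs a separate treatment, those parameters lying on $\RR^-$ or in its accumulation set and being handled as in Proposition~\ref{negescape}.

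Finally, suppose toward a contradiction that $\lambda$ were structurally stable. Then $\lambda$ has a neighborhood $U$ of structurally stable parameters, each topologically conjugate to $f_\lambda$. By the previous paragraph $U$ meets some $\Omega_j$, so for $\mu\in U\cap\Omega_j$ the maps $f_\mu$ and $f_\lambda$ are topologically conjugate. But $f_\mu$ has an attracting cycle, whose immediate basin is a cycle of Fatou components on which the iterates converge to the cycle; transporting this picture back along the conjugacy yields a cycle of Fatou components of $f_\lambda$ on which the iterates converge to a periodic point in their interior, so (since $f_\lambda$ has no critical points it is not super-attracting) an attracting cycle of $f_\lambda$. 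This contradicts the fact that $f_\lambda$ has no attracting cycle. Hence $\lambda$ is not structurally stable; that is, $\lambda\in\calb$.
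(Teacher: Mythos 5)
Your proposal follows essentially the same route as the paper: truncate the infinite kneading sequence, invoke Theorem D (Theorem~\ref{6.7}) to produce pseudo-hyperbolic components realizing the truncations, and use the boundedness of the entries to keep the approximating parameters from degenerating (to $-\infty$ or to a virtual cycle parameter), so that they accumulate at $\lambda$. The only difference is in the endgame --- you contrast the approximants' attracting cycles with the absence of one for $f_\lambda$, while the paper notes the approximants lie in infinitely many distinct components --- and both versions yield structural instability.
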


\begin{proof}
Suppose the kneading sequence corresponding to the escaping parameter $\la_0$ is $S=*k_1k_2 \ldots k_n 0 k_{n+2}0k_{n+4}0 \ldots k_{n+2j}0 \ldots$.  It follows as above that $\la_0 \in L_{k_1k_2 \ldots k_n 0 k_{n+2}0k_{n+4}0 \ldots k_{n+2j}0\ldots}$.

 As we did in the proof of theorem~\ref{Main Theorem A}, we can identify the sets of  R-L structure with  sets in the parameter space.  Then, in the parameter space, for each $j$ we can choose a $\la_j$ in the image of  $R_{k_1k_2 \ldots k_n 0 k_{n+2}0k_{n+4}0 \ldots k_{n+2j}0}$ so that $\la_j \to \la_0$.   By theorem~\ref{6.7}, for each $j$,   $E^i(-2w_{n}(\la_j))\in \calr^{even}_{k_i, M,\epsilon}$,  and $E^{j-1}(-2w_n(\la_j))\in \calr^{odd}_{k_{j-1},M, \epsilon}$. The boundedness condition implies that we can choose $\la_j$ so that $\Re \la_j$ does not go to $-\infty$ or to a virtual cycle parameter.   Then  each $f_{\la_j}$ is in a different pseudo-hyperbolic component.
\end{proof}

The structural instability of exponential maps $f_\lambda=\lambda e^z$ was first shown to exist for  $\lambda=1$ in \cite{D1}, then for $\lambda>e^{-1}$ in \cite{ZL}, and then at all escaping parameters at \cite{Y}. Our proposition~\ref{escpaths} indicates that maps are structurally unstable at uncountably many paths in the set of escaping parameters.

 We conclude the paper with a set of open questions:
 \begin{itemize}
 \item Are there other continua of escaping parameters in the bifurcation locus besides the negative real axis?
 \item The Julia sets of escaping parameters is always the whole sphere.  Does the set of points escaping to $\{0,\infty\}$ contain smooth curves?  Does this set have positive measure?
 \end{itemize}


\begin{thebibliography}{12}

\bibitem[ABF]{ABF} M. Astorg, A. M. Benini and N. Fagella, {\em Bifurcation loci of families of finite type meromorphic maps}. arXiv:2107.02663.

\bibitem[B]{B} W. Bergweiler, {\em Iteration of meromorphic functions}. Bull. Amer. Math. Soc. \textbf{29} (1993), 151-188.

\bibitem[BF]{BF} B. Branner, N. Fagella.\emph{ Quasiconformal surgery in holomorphic dynamics}. Cambridge University Press, 2014.
\bibitem[BKL1]{BKL1}  I. N. Baker, J. Kotus and Y. L\"u, {\em Iterates of meromorphic functions II: Examples of wandering domains}. J. London Math. Soc. {\bf 42}(2) (1990), 267-278.
\bibitem[BKL2]{BKL2}  I. N. Baker, J. Kotus and Y. L\"u, {\em Iterates of meromorphic functions I}. Ergodic Th. and Dyn. Sys {\bf 11} (1991), 241-248.
\bibitem[BKL3]{BKL3}  I. N. Baker, J. Kotus and Y. L\"u, {\em Iterates of meromorphic functions III: Preperiodic Domains}.  Ergodic Th. and Dyn. Sys {\bf 11} (1991), 603-618.
\bibitem[BKL4]{BKL4}  I. N. Baker, J. Kotus and Y. L\"u, {\em Iterates of meromorphic functions IV: Critically finite functions}. Results in Mathematics {\bf 22} (1991), 651-656.

\bibitem[CJK1]{CJK1} T. Chen, Y. Jiang and L. Keen, \emph{Cycle doubling, merging, and renormalization in the tangent family}. Conform. Geom. Dyn. 22, 271-314 (2018).
\bibitem[CJK2]{CJK2} T. Chen, Y. Jiang and L. Keen, \emph{Accessible Boundary Points in the Shift Locus of a Family of Meromorphic Functions with Two Finite Asymptotic Values}. Arnold Math J. (2021).
\bibitem[CJK3]{CJK3} T. Chen, Y. Jiang and L. Keen, \emph{Slices of parameter space for meromorphic maps with two asymptotic values},  Ergodic Theory and Dynamical Systems. 18, online October 2021, pp. 1-41
\bibitem[CK1]{CK1} T. Chen, L. Keen, \emph{Slices of parameter spaces of generalized Nevanlinna functions}. Discrete and continuous Dynamical Systems, {\bf 39}, Number 10 (2019), 5659-5681.
    \bibitem[CK2]{CK2}  T. Chen, L. Keen, \emph{Capture Components of the family $\lambda tan^pz^q$}, New Zealand Journal of Mathematics Vaughan Jones Memorial Special Issue, Vol 52 (2021), 469-510.
        \bibitem[D1]{D1} R. Devaney, \emph{The structural instability of $exp(z)$}, Proc. Amer. Math. Soc. 94 (1985), 545-548.
\bibitem[DFJ]{DFJ} R. Devaney, N. Fagella, and X. Jarque, \emph{Hyperbolic components of the complex exponential family}, Fund. Math., 174 (2002), no. 3, 193-215.
 \bibitem[DG]{DG} R. Devaney, L.R.  Goldberg, {\em Uniformization of attracting basins for exponential maps}. Duke Math. J. 55 (1987), no. 2, 253–266.
\bibitem[DH] {DH} A. Douady, J.H.  Hubbard, {\em Etude dynamique des polynomes complexes. Parties I and II. (French) [Dynamical study of complex polynomials. Parts I,II] }Publications Mathématiques d'Orsay [Mathematical Publications of Orsay], 85-4, 84-2. Université de Paris-Sud, Département de Mathématiques, Orsay, 1984-5.
\bibitem[DK]{DK} R. Devaney, L. Keen, \emph{Dynamics of meromorphic functions: functions with polynomial Schwarzian derivative,} Ann. Sci. \'Ec. Norm. Sup\'er. (4) 22 (1989), 55-79.
\bibitem[DT]{DT} R. Devaney, F. Tangerman, \emph{Dynamics of Entire Functions Near the Essential Singularity}, Ergodic Theory and Dynamical Systems, Vol. 6, 1986, pp. 183-193. 
\bibitem[FK]{FK} N. Fagella, L. Keen,  \emph{Stable Components in the Parameter Plane of Transcendental Functions of Finite Type},  J Geom Anal 31 (2021), 4816-4855 .
\bibitem[FRS]{FRS} M. F\"orster, L. Rempe, and D. Schleicher. \emph{Classification of Escaping Exponential Maps}, Proceedings of the American Mathematical Society 136, no. 2 (2008): 651-663.

\bibitem[KK]{KK} L. Keen, J. Kotus, \emph{Dynamics of the family of $\lambda\tan z$}, Conform. Geom. Dyn. Vol. 1 (1997), 28-57 .
\bibitem[M]{M} J. Milnor,   {\em Dynamics in one complex variable}. Third edition. Annals of Mathematics Studies, 160. Princeton University Press, Princeton, NJ, 2006.
\bibitem[MSS]{MSS} R. Ma\~n\'e, P. Sad, D. Sullivan, {\em On the dynamics of rational maps}, Ann. Sci. \'Ec. Norm. Sup\'er. (4), 16 (1983), no 2., 193 -217
\bibitem[S]{S} D. Schleicher, \emph{Attracting Dynamics of Exponential Maps}, Ann. Acad. Sci. Fenn. Math., 28 (2003), 3-34.
\bibitem[Skor]{Skor} B. Skorulski, \emph{Non-ergodic maps in the tangent family}. Indagationes Mathematicae, Vol. {\bf 14} (1) (2003), 103-118.
\bibitem[Sul] {Sul} D. Sullivan,  {\em
Quasiconformal homeomorphisms and dynamics. I. Solution of the Fatou-Julia problem on wandering domains.}
Ann. of Math. (2) 122 (1985), no. 3, 401-418.
\bibitem[Y]{Y} Z. Ye, Structural instability of exponential functions; Tran. of the Ame. of Math. Soc., 344 (1994), no.1. 379--389.


\bibitem[ZL]{ZL} J. Zhou and Z. Li, \emph{structural instability of mapping $z\to A exp(z)\  (A > e^{-1})$}, Sei. China Ser. A 30(1989), 1153-1161.



\end{thebibliography}
\end{document}